 \newtheorem{thm}{Theorem}[section]
 \newtheorem{lemma}[thm]{Lemma}
 \newtheorem{prop}[thm]{Proposition}
 \theoremstyle{definition}
 \theoremstyle{remark}
 \newtheorem{rmk}[thm]{Remark}
 \numberwithin{equation}{section}
\DeclareMathOperator\supp{supp}
\DeclareMathOperator\diver{div}
\DeclareMathOperator\Fuj{Fuj}
\DeclareMathOperator\GN{GN}
\begin{document}

%
%
%
%
%
%
%
%
%

\title[Scale-invariant wave equations in exponentially weighted spaces]{Global existence of solutions for semi-linear wave equation with scale-invariant damping and mass in exponentially weighted spaces}




\author[A. Palmieri]{Alessandro Palmieri}

\address{%
Faculty of Mathematics and Informatics\\
Institute of Applied Analysis\\
Technical University Bergakademie Freiberg\\
Pr\"{u}ferstra{\ss}e 9\\
09596  Freiberg \\
Germany}

\email{alessandro.palmieri.math@gmail.com}

\thanks{The PhD study of the author is supported by S\"{a}chsiches Landesgraduiertenstipendium.}
\subjclass{Primary: 35A01, 35B33, \\ 35L71; Secondary: 33B44, 35B45, 35L05, 35L15.}

\keywords{Semi-linear scale-invariant wave equation with dissipation and mass; power non-linearity; small data global existence; blow-up; critical exponent; energy spaces with exponential weight}

\date{January 1, 2004}

\begin{abstract}
In this paper we consider the following Cauchy problem for the semi-linear wave equation with scale-invariant  dissipation and mass and power non-linearity:
\begin{align}\label{CP abstract}
\begin{cases} u_{tt}-\Delta u+\dfrac{\mu_1}{1+t} u_t+\dfrac{\mu_2^2}{(1+t)^2}u=|u|^p, \\ u(0,x)=u_0(x), \,\, u_t(0,x)=u_1(x),
\end{cases}\tag{$\star$}
\end{align}
where $\mu_1, \mu_2^2$ are nonnegative constants and $p>1$.

 On the one hand we will prove a global (in time) existence result for \eqref{CP abstract} under suitable assumptions on the coefficients $\mu_1, \mu_2^2$ of the damping and the mass term and on the exponent $p$, assuming the smallness of data in exponentially weighted energy spaces. 
 On the other hand a blow-up result for \eqref{CP abstract} is proved for values of $p$ below a certain threshold, provided that the data satisfy some integral sign conditions.  
 
 Combining these results we find the critical exponent for \eqref{CP abstract} in all space dimensions under certain assumptions on $\mu_1$ and $\mu_2^2$.
 Moreover, since the global existence result is based on a contradiction argument, it will be shown firstly a local (in time) existence result.  
\end{abstract}

\maketitle

\section{Introduction}\label{Intro}

In this paper we study the global in time existence of small data solutions and the blow-up in finite time of solutions to the Cauchy problem 
\begin{align}\label{CP semilinear 1}
\begin{cases}
u_{tt}-\Delta u+\frac{\mu_1}{1+t} u_t+\frac{\mu_2^2}{(1+t)^2}u=|u|^p, & t>0, \,\, x\in \mathbb{R}^n, \\ u(0,x)=u_0(x), & x\in\mathbb{R}^n, \\ u_t(0,x)=u_1(x), & x\in\mathbb{R}^n,
\end{cases}
\end{align}  in any space dimension $n\geq 1$, where $\mu_1>0$ and $\mu_2^2\geq 0$ are constants and $p>1$.

The main purpose of the present article is to extend a result from \cite{NunPalRei16} to any spatial dimension $n\geq 1$. More precisely, considering the quantity
\begin{align}\label{delta definition}
\delta := (\mu_1-1)^2-4\mu_2^2
\end{align} which describes somehow the interplay between the damping and the mass term, in \cite[Theorem 2.2]{NunPalRei16} it is proved a global existence result of small data energy solutions in space dimensions $n=1,2,3,4$ for a certain range of $\delta$, assuming additional $L^1$ regularity for the initial data. The restriction on the spatial dimension is due to the employment of Gagliardo-Nirenberg inequality in the estimates of the non-linear term, which implies the restrictions $p\geq 2$ and $p\leq p_{\GN}(n):=\frac{n}{n-2}$ when $n\geq 3$ for the exponent of the non-linearity. In order to avoid this type of conditions on $p$, we may consider stronger assumptions on initial data. More specifically, we will consider data in energy spaces with weight of exponential type. 

Let us clarify the role of $\delta$ in the description of the equation 
\begin{align}\label{scale invariant wave equation with damp and mass}
u_{tt}-\Delta u+\frac{\mu_1}{1+t}u_t+\frac{\mu_2^2}{(1+t)^2}u=0.
\end{align}
Considering  $v(t,x)=(1+t)^\gamma u(t,x)$, we can transform \eqref{scale invariant wave equation with damp and mass} in a wave equation with either just a scale-invariant damping or just a scale-invariant mass with a suitable choice of $\gamma\in \mathbb{R}$, depending on the value of $\delta$.

In particular for $\delta\geq (n+1)^2$ we can transform \eqref{scale invariant wave equation with damp and mass} in a scale-invariant wave equation with a damping term, which looks like effective under the point of view of decay estimates (even though, according to the classification introduced in \cite{WiThe}, the scale-invariant time dependent coefficient of the damping term for the transformed equation does not belong to the class of effective damping terms). Roughly speaking, assuming the above mentioned range for $\delta$, we guarantee $L^2-L^2$ estimates for \eqref{scale invariant wave equation with damp and mass} of \textquotedblleft parabolic type\textquotedblright $\,$ for the solution and its first order derivatives.

Let us report a brief historical overview, that is functional to elucidate our approach, on those papers in which this type of exponentially weighted Sobolev spaces is used in the study of semi-linear hyperbolic equations.

A first pioneering work in this direction is represented by 
\cite{TodYor01}, in which a global existence (in time) result in the space $\mathcal{C}([0,\infty),H^1)\cap \mathcal{C}^1([0,\infty),L^2)$ is proved for a classical damped wave equation with power non-linearity $|u|^p$, provided that the exponent  satisfies $p>p_{\Fuj}(n):=1+\frac{2}{n}$ and $p\leq p_{\GN}(n)$ for $n\geq 3$ and the data are compactly supported in $B_K(0)=\{x\in\mathbb{R}^n:|x|\leq K\}$. In particular, the weight used in the derivation of this result is $\mathrm{e}^{\psi_{0}(t,\cdot)}$, where
\begin{align*}
\psi_{0}(t,x)=\tfrac{1}{2}(t+K-\sqrt{(t+K)^2-|x|^2}) \quad \mbox{for} \,\, |x|<t+K.
\end{align*}  

Afterwards, 
in \cite{IkeTa05} the authors improved, for the same range of $p$, the previous result for the classical damped wave equation, removing the compactness assumption for the support of data and requiring instead the belonging of data to certain exponentially weighted spaces. This goal is achieved through a different choice of the weight function. Namely, instead of $\psi_0$ the authors consider
\begin{align*}
\psi_1(t,x)=\tfrac{|x|^2}{4(1+t)}.
\end{align*}

Subsequently, the damped wave equation with coefficient $b(t)=b_0(1+t)^{-\beta}$, where $b_0>0$, was studied for absorbing non-linearity (when $-1<\beta<1$) and for power and source non-linearity (whether $0\leq \beta <1$) in \cite{NishiZh09} and \cite{Nishi11}, respectively, for a suitable choice of the exponent function $\psi$. 

Indeed, in \cite{NishiZh09} some a-priori estimates are derived for solutions  to the equation 
\begin{equation*}
u_{tt}-\Delta u+b(t) u_t= -|u|^{p-1}u
\end{equation*} in the evolution space $\mathcal{C}([0,\infty),H^1)\cap \mathcal{C}^1([0,\infty),L^2)$, provided that $p$ satisfies $p>1$ and $p<\frac{n+2}{n-2}$ for $n\geq 3$ and the data are compactly supported.

 On the other hand, assuming $p>p_{\Fuj}(n)$ and $p<\frac{n+2}{n-2}$ if $n\geq 3$, 
 in \cite{Nishi11} a global existence result is proved for the equation
\begin{equation*}
u_{tt}-\Delta u+b(t) u_t= f(u),
\end{equation*}
where $f(u)=\pm |u|^p$ or $f(u)=|u|^{p-1}u$, in the case in which the data norm is sufficiently small in proper weighted Sobolev spaces. Furthermore, two blow-up results are proved in \cite{Nishi11} for these two different types of non-linearity.

Then, in \cite{DabbLucRei13} the authors, among the other things, generalize the global existence result in energy spaces with suitable exponential weight proved in \cite{Nishi11} to more general damping terms for $p>p_{\Fuj}(n)$ and $p\leq p_{\GN}(n)$ if $n\geq 3$. In fact, according to  
 the classification given in \cite{WiThe,Wi07}, in \cite{DabbLucRei13} the so-called \emph{effective damping term} $b(t)u_t$ is considered (to which the typology of damping  $b_0(1+t)^{-\beta}$ belongs for $-1<\beta<1$). 

Finally, in \cite{D13} the author proves a global existence result of small data solutions  for the semi-linear wave equation with scale-invariant damping term
\begin{equation*}
u_{tt}-\Delta u+\frac{\mu_1}{1+t} u_t= |u|^p,
\end{equation*}
assuming the condition $\mu_1\geq n+2$ for the coefficient of the damping term, $p>p_{\Fuj}(n)$ and $p\leq p_{\GN}(n)$ for $n\geq 3$ and the smallness of data in suitably chosen weighted Sobolev  spaces. More in detail, here the function 
\begin{align*}
\psi(t,x)=\frac{\mu_1 |x|^2}{2(1+t)^2}
\end{align*} is considered as exponent for the exponential weight. 
In our approach we will follow this choice of $\psi$. 

Indeed, we can slightly modify the estimate of \cite{DabbLucRei13,D13} in order to include
the additional scale-invariant mass term which is present in \eqref{CP semilinear 1}.

In the proof of the global existence (in time) result of small data solution a contradiction argument is used. For this reason it is necessary to prove at first a local (in time) existence result.

Finally, we report a blow-up result. Although this blow-up result provides the same range of $p$ in \eqref{CP semilinear 1}, that is derived in \cite{NunPalRei16} or that can be found by using the arguments of \cite{DabbLucMod}, for which the solution blows up in finite time under suitable conditions on the data, we will anyway provide a proof for this blow-up result. Indeed, the method we are going to write down is simpler and interesting in itself. Therefore, also for sake of completeness, we will include the proof of such blow-up result. Hence, combing the global existence result for small data solutions and the blow-up result, we determine explicitly the critical exponent for \eqref{CP semilinear 1} whether $\delta$ is sufficiently large.    
\subsection{Notations}

In this paper, we write $f \lesssim g$, when there exists a constant $C>0$ such that $f \leq Cg$. We write $f \approx g$ when $g \lesssim  f \lesssim g$.

 As we did in the introduction, we will use the notation $p_{\Fuj}(n)=1+\frac{2}{n}$ for the Fujita exponent.
 
Moreover, throughout the article we will denote by $\psi$ the function
\begin{equation}\label{definition of psi}
\psi(t,x)=\frac{\mu_1|x|^2}{2(1+t)^2}.
\end{equation}

For sake of brevity, we put 
\begin{align*}
b(t)=\frac{\mu_1}{1+t}, \quad m^2(t)=\frac{\mu_2^2}{(1+t)^2}
\end{align*} for the coefficients of damping and mass terms.

All function spaces consist of function defined on the whole space $\mathbb{R}^n$, therefore, we will not specify this fact in the notations.

Let $\sigma>0$ and $t\geq 0$. As in \cite{TodYor01} and \cite{IkeTa05}, we define the \emph{Sobolev spaces $L^2$ and $H^1$ with exponential weight $\mathrm{e}^{\sigma\psi(t,\cdot)}$ }
as follows:
\begin{align*}
L^2_{\sigma\psi(t,\cdot)}&:=\{f\in L^2:\|\mathrm{e}^{\sigma\psi(t,\cdot)}f\|_{L^2}<\infty\},\\
H^1_{\sigma\psi(t,\cdot)}&:=\{f\in H^1:\|\mathrm{e}^{\sigma\psi(t,\cdot)}f\|_{L^2}+\|\mathrm{e}^{\sigma\psi(t,\cdot)}\nabla f\|_{L^2}<\infty\},
\end{align*} with the norms 
 \begin{align*}
\|f\|_{L^2_{\sigma\psi(t,\cdot)}}&:=\|\mathrm{e}^{\sigma\psi(t,\cdot)}f\|_{L^2},\\
\|f\|_{H^1_{\sigma\psi(t,\cdot)}}&:=\|f\|_{L^2_{\sigma\psi(t,\cdot)}}+\|\nabla f\|_{L^2_{\sigma\psi(t,\cdot)}}.
\end{align*}

Finally, we denote by $\mathcal{A}$ the space
\begin{align}\label{space for initial data}
\mathcal{A}:=H^1_{\psi(0,\cdot)}\times L^2_{\psi(0,\cdot)}=H^1_{\frac{\mu_1}{2}|x|^2}\times L^2_{\frac{\mu_1}{2}|x|^2},
\end{align} that we will use for initial data.

\begin{rmk} Let us point out explicitly that the condition $(u_0,u_1)\in \mathcal{A}$ is stronger than the assumption $(u_0,u_1)\in (H^1\times L^2)\cap (L^1)^2$ made in \cite{NunPalRei16}. 

More generally,  for any $\sigma>0$ and $t\geq 0$ we have the embedding
\begin{align*}
L^2_{\sigma\psi(t,\cdot)}\hookrightarrow L^1\cap L^2.
\end{align*}  Indeed, by using Cauchy-Schwarz inequality and the positivity of $\psi$ we get
\begin{align}
\|f\|_{L^1}&\lesssim (1+t)^{\frac{n}{2}}\|f\|_{L^2_{\sigma\psi(t,\cdot)}},\label{L1-L2 weight est}\\
\|f\|_{L^2}&\leq \|f\|_{L^2_{\sigma\psi(t,\cdot)}},\label{L2-L2 weight est}
\end{align} where in the first inequality we use the value of the Gaussian integral. Then by H\"{o}lder's interpolation inequality we have also the embedding of $L^2_{\sigma\psi(t,\cdot)}$ in each $L^r$ for any $r\in [1,2]$.
\end{rmk}

\section{Main results}

Let us state the main theorems that will be proved in the present article.

\begin{thm}[Local existence]\label{thm local existence} Let $n\geq 1$ and $\mu_1>0$, $\mu_2^2$ be the nonnegative constants that appear in the damping and in the mass term in \eqref{CP semilinear 1}. 
Let us assume $p>1$ such that $p\leq \frac{n}{n-2}$ if $n\geq 3$.

 Then for each initial data $(u_0,u_1)\in \mathcal{A} 
 $ there exists a maximal existence time $T_m\in(0,\infty]$ such that the Cauchy problem \eqref{CP semilinear 1} has a unique solution $u\in\mathcal{C}([0,T_m),H^1)\cap \mathcal{C}^1([0,T_m),L^2)$ satisfying
\begin{align*}
\sup_{t\in[0,T]}\Big(\|u(t,\cdot)\|_{H^1_{\psi(t,\cdot)}}+\|u_t(t,\cdot)\|_{L^2_{\psi(t,\cdot)}}\Big)<\infty,
\end{align*} for any $T\in (0,T_m)$. 

Finally, if $T_m<\infty$, then
\begin{align*}
\limsup_{T\to T_m^-}\Big(\|u(t,\cdot)\|_{H^1_{\psi(t,\cdot)}}+\|u_t(t,\cdot)\|_{L^2_{\psi(t,\cdot)}}\Big)=\infty.
\end{align*}
\end{thm}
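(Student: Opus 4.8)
The plan is to prove Theorem \ref{thm local existence} by a standard contraction mapping (Banach fixed point) argument adapted to the exponentially weighted energy space. First I would fix a time $T>0$ (to be chosen small) and introduce the complete metric space
\begin{align*}
X(T):=\Big\{u\in\mathcal{C}([0,T],H^1)\cap\mathcal{C}^1([0,T],L^2): \|u\|_{X(T)}:=\sup_{t\in[0,T]}\big(\|u(t,\cdot)\|_{H^1_{\psi(t,\cdot)}}+\|u_t(t,\cdot)\|_{L^2_{\psi(t,\cdot)}}\big)<\infty\Big\},
\end{align*}
equipped with the natural distance induced by $\|\cdot\|_{X(T)}$. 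For a given $u\in X(T)$ I would let $Nu:=v$ be the unique solution of the \emph{linear} Cauchy problem
\begin{align*}
\begin{cases}
v_{tt}-\Delta v+b(t)v_t+m^2(t)v=|u|^p,\\
v(0,\cdot)=u_0,\quad v_t(0,\cdot)=u_1,
\end{cases}
\end{align*}
and the goal is to show that, for $T$ sufficiently small (depending on the data), $N$ maps a suitable closed ball $B_R\subset X(T)$ into itself and is a contraction there.

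The analytic heart of the argument is a pair of a priori estimates for the linear problem, carried out by the weighted energy method: multiplying the equation by $\mathrm{e}^{2\psi}v_t$ (plus, as usual, a correction term proportional to $\mathrm{e}^{2\psi}v$ to control the $H^1$-norm and to absorb lower order terms) and integrating over $\mathbb{R}^n$, one derives a differential inequality for the weighted energy
\begin{align*}
E[v](t):=\tfrac12\|v_t(t,\cdot)\|_{L^2_{\psi(t,\cdot)}}^2+\tfrac12\|\nabla v(t,\cdot)\|_{L^2_{\psi(t,\cdot)}}^2+\tfrac12\|v(t,\cdot)\|_{L^2_{\psi(t,\cdot)}}^2.
\end{align*}
The key algebraic point, already exploited in \cite{IkeTa05,Nishi11,DabbLucRei13,D13} for the choice $\psi(t,x)=\frac{\mu_1|x|^2}{2(1+t)^2}$, is that the "bad" terms produced by differentiating the weight — namely those involving $\psi_t$ and $|\nabla\psi|^2$ — are dominated by the good dissipative term $b(t)\,\mathrm{e}^{2\psi}|v_t|^2$, because $\psi$ satisfies $\psi_t+ b(t)^{-1}|\nabla\psi|^2\lesssim 0$ (this is precisely why this specific $\psi$ is chosen). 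The mass term $m^2(t)v$ contributes a term of size $(1+t)^{-2}$ which is harmless on a finite interval $[0,T]$ and can be absorbed by Gronwall. This yields on $[0,T]$ an estimate of the form
\begin{align*}
\|v\|_{X(T)}\lesssim \|(u_0,u_1)\|_{\mathcal{A}}+\int_0^T\big\|\,|u(s,\cdot)|^p\big\|_{L^2_{\psi(s,\cdot)}}\,ds,
\end{align*}
with an implicit constant depending on $T$ but bounded for $T$ in bounded sets, and the analogous estimate for the difference $Nu-N\tilde u$ with right-hand side $\int_0^T\||u(s,\cdot)|^p-|\tilde u(s,\cdot)|^p\|_{L^2_{\psi(s,\cdot)}}\,ds$.

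The remaining ingredient is the nonlinear estimate: I must bound $\big\|\,|u|^p\big\|_{L^2_{\psi(t,\cdot)}}=\big\|\mathrm{e}^{\psi(t,\cdot)}|u|^p\big\|_{L^2}$ in terms of $\|u\|_{X(T)}$. Writing $\mathrm{e}^{\psi}|u|^p=\big(\mathrm{e}^{\psi/p}|u|\big)^p$ and using the pointwise bound together with Sobolev embedding $H^1\hookrightarrow L^{2p}$ (valid exactly for $p>1$ and, when $n\geq3$, for $p\le \frac{n}{n-2}$, which is where the dimensional restriction on $p$ enters — note $\frac{n}{n-2}$ is half the Sobolev exponent $\frac{2n}{n-2}$), one gets
\begin{align*}
\big\|\,|u(t,\cdot)|^p\big\|_{L^2_{\psi(t,\cdot)}}\lesssim \big\|\mathrm{e}^{\psi(t,\cdot)/p}u(t,\cdot)\big\|_{L^{2p}}^p\lesssim \|u(t,\cdot)\|_{H^1_{\psi(t,\cdot)/p}}^p\le \|u(t,\cdot)\|_{H^1_{\psi(t,\cdot)}}^p\le \|u\|_{X(T)}^p,
\end{align*}
where the last inequalities use $0<\psi/p\le\psi$ (since $p>1$ and $\psi\ge0$) and monotonicity of the weighted norm in $\sigma$; a small subtlety is that $\nabla(\mathrm{e}^{\psi/p}u)$ produces a term $\mathrm{e}^{\psi/p}u\,\nabla\psi/p$, which must be controlled — but on a finite time interval $\nabla\psi$ is bounded on the effective support, or more cleanly one keeps the weight $\sigma\psi$ with $\sigma<1$ strictly and absorbs the gradient-of-weight factor using $|\nabla\psi|^2\lesssim b(t)\psi\lesssim$ the available weighted $L^2$ norm, so the estimate closes with room to spare. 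The difference estimate $\||u|^p-|\tilde u|^p\|_{L^2_{\psi}}\lesssim (\|u\|_{X(T)}^{p-1}+\|\tilde u\|_{X(T)}^{p-1})\|u-\tilde u\|_{X(T)}$ follows in the same way from the elementary inequality $\big||a|^p-|b|^p\big|\lesssim(|a|^{p-1}+|b|^{p-1})|a-b|$.

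Combining these, the map $N$ satisfies $\|Nu\|_{X(T)}\le C_0\|(u_0,u_1)\|_{\mathcal{A}}+C_1 T R^p$ and $\|Nu-N\tilde u\|_{X(T)}\le C_1 T R^{p-1}\|u-\tilde u\|_{X(T)}$ on the ball $B_R$ with $R:=2C_0\|(u_0,u_1)\|_{\mathcal{A}}$; choosing $T$ small enough that $C_1 T R^{p-1}\le 1/2$ makes $N:B_R\to B_R$ a contraction, and Banach's fixed point theorem produces the unique local solution in $X(T)$. A continuation/bootstrap argument then extends it to a maximal interval $[0,T_m)$, and the blow-up alternative follows in the usual way: if $T_m<\infty$ but the weighted norm stayed bounded as $t\to T_m^-$, one could restart the fixed point argument from a time close to $T_m$ with uniform existence time, contradicting maximality. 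I expect the main obstacle to be the careful bookkeeping in the weighted energy estimate — specifically, verifying that the choice $\psi(t,x)=\frac{\mu_1|x|^2}{2(1+t)^2}$ makes the weight-derivative terms absorbable by the damping for \emph{all} $\mu_1>0$ and $\mu_2^2\ge0$ (with the mass term handled merely by Gronwall on finite intervals, since no largeness of $\delta$ is needed for local existence), and dealing correctly with the gradient-of-weight term in the nonlinear estimate; everything else is routine.
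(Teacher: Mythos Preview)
Your proposal is essentially correct and follows the same contraction-mapping strategy as the paper: solve the linear problem with frozen source $|v|^p$, derive a weighted energy estimate via the multiplier $\mathrm{e}^{2\psi}v_t$, control the nonlinearity through a Sobolev-type embedding in the weighted space, and close by choosing $T$ small. A few minor technical differences are worth noting. First, the paper does not add a correction multiplier $\mathrm{e}^{2\psi}v$; instead it controls $\|u\|_{L^2_{\psi(t,\cdot)}}$ directly through the mass term via $\|\mathrm{e}^{\psi}u\|_{L^2}\lesssim m^{-1}(t)\,E_{\psi,u}(t)^{1/2}$ (when $\mu_2=0$ one can alternatively use Lemma~\ref{1 lemma GN with weight}, which gives $\|\mathrm{e}^{\sigma\psi}v\|_{L^2}\lesssim (1+t)\|\mathrm{e}^{\sigma\psi}\nabla v\|_{L^2}$). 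Second, the paper integrates the energy differential inequality via Bihari's inequality (Lemma~\ref{lemma of gronwall type}) rather than Gronwall, yielding directly $E_{\psi,u}(t)^{1/2}\leq E_{\psi,u}(0)^{1/2}+\int_0^t\|\mathrm{e}^{\psi}|v|^p\|_{L^2}\,ds$. Third, and most relevant to the ``subtlety'' you flag, the gradient-of-weight term is handled cleanly by the integration-by-parts identity of Lemma~\ref{1 lemma GN with weight}, which gives $\|\nabla(\mathrm{e}^{\sigma\psi}v)\|_{L^2}\leq \|\mathrm{e}^{\sigma\psi}\nabla v\|_{L^2}$ for any $\sigma>0$ (the key being $\Delta\psi>0$); this is then packaged into the weighted Gagliardo--Nirenberg inequality of Lemma~\ref{2 lemma GN with weight}, which replaces your direct Sobolev-embedding step and avoids any appeal to ``effective support''. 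With that lemma in hand, your sketch goes through without further obstacles.
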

The previous local existence result is a prerequisite to obtain the next global existence result, whose proof is based on a contradiction argument that requires the existence of local in time solutions for \eqref{CP semilinear 1}.  For the proof of Theorem \ref{thm local existence} we will follow the approach of \cite[Appendix A]{IkeTa05}.

\begin{thm}[Global existence of small data solutions]\label{thm glob exi exp data} Let $n\geq 1$ and $\mu_1>0$, $\mu_2$ be nonnegative constants such that $\delta \geq (n+1)^2$. 
Let us consider 
\begin{align*}
p>p_{\Fuj}\left(n+\tfrac{\mu_1-1}{2}-\tfrac{\sqrt{\delta}}{2}\right) \quad\mbox{such that} \,\,\, p\leq \tfrac{n}{n-2} \,\,\, \mbox{if} \,\,\, n\geq 3.
\end{align*}

 Then there exists $\varepsilon_0>0$ such that for any initial data 
\begin{align}\label{global existence eponential weight data cond}
(u_0,u_1)\in \mathcal{A}\quad \mbox{satisfying}\quad \|(u_0,u_1)\|_\mathcal{A}\leq \varepsilon_0
\end{align} there is a unique solution $u\in \mathcal{C}\big([0,\infty),H^1_{\psi(t,\cdot)}\big)\cap \mathcal{C}^1\big([0,\infty),L^2_{\psi(t,\cdot)}\big)$ to the Cauchy problem \eqref{CP semilinear 1}.
Moreover, $u$ satisfies the following estimates:
\begin{align*}
\|u(t,\cdot)\|_{L^2}&\lesssim (1+t)^{-\frac{n}{2}-\frac{\mu_1}{2}+\frac{1}{2}+\frac{\sqrt{\delta}}{2}}\|(u_0,u_1)\|_\mathcal{A},\\
\|(\nabla u,u_t)(t,\cdot)\|_{L^2\vphantom{L^2_{\psi(t,\cdot)}}}&\lesssim (1+t)^{-\frac{n}{2}-\frac{\mu_1}{2}-\frac{1}{2}+\frac{\sqrt{\delta}}{2}}\ell_\delta(t)\|(u_0,u_1)\|_\mathcal{A},\\
\|u(t,\cdot)\|_{L^2_{\psi(t,\cdot)}}&\lesssim (1+t)\|(u_0,u_1)\|_\mathcal{A},\\
\|(\nabla u,u_t)(t,\cdot)\|_{L^2_{\psi(t,\cdot)}}&\lesssim \|(u_0,u_1)\|_\mathcal{A},\\
\end{align*} with $$\ell_\delta(t)=\begin{cases}1 &\mbox{if}\quad \delta>(n+1)^2, \\1+\left(\log(1+t)\right)^\frac{1}{2} &\mbox{if}\quad \delta=(n+1)^2. \end{cases}$$
\end{thm}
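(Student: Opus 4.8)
\emph{The plan} is to combine the local existence result of Theorem~\ref{thm local existence} with global-in-time a priori estimates in a weighted solution space, concluding by a continuation (contradiction) argument: if $T_m<\infty$, the a priori bounds keep $\|u(t,\cdot)\|_{H^1_{\psi(t,\cdot)}}+\|u_t(t,\cdot)\|_{L^2_{\psi(t,\cdot)}}$ bounded as $t\to T_m^-$, contradicting the last assertion of Theorem~\ref{thm local existence}. Set $\gamma:=\tfrac{\mu_1-1-\sqrt\delta}{2}$; under the hypotheses one has $\mu_1>1$ and $0\le\sqrt\delta\le\mu_1-1$, hence $\gamma\ge0$, and the four exponents in the statement are $-\tfrac n2-\gamma$, $-\tfrac n2-\gamma-1$, $1$, $0$. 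The role of $\gamma$ is that $v(t,x):=(1+t)^\gamma u(t,x)$ solves
\begin{align*}
v_{tt}-\Delta v+\frac{1+\sqrt\delta}{1+t}\,v_t=(1+t)^{-\gamma(p-1)}|v|^p ,
\end{align*}
a scale-invariant \emph{damped} wave equation with coefficient $\widetilde\mu_1:=1+\sqrt\delta\ge n+2$ (here $\delta\ge(n+1)^2$ enters) and a non-linearity carrying the non-increasing factor $(1+t)^{-\gamma(p-1)}\le1$; this is essentially the situation of \cite{D13}, and $p_{\Fuj}(n+\gamma)$ is the corresponding Fujita exponent once the scaling $t\mapsto(1+t)^2$ of the effective equation and the shift $n\mapsto n+\gamma$ produced by the factor $(1+t)^{\gamma}$ are taken into account.

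The core is a set of linear estimates of two types. \emph{(i) Unweighted estimates.} For the homogeneous effective equation with $\widetilde\mu_1\ge n+2$ one has the parabolic-type bounds $\|v(t,\cdot)\|_{L^2}\lesssim(1+t)^{-n/2}\|(v,v_t)(0,\cdot)\|_{L^1\cap L^2}$ and $\|(\nabla v,v_t)(t,\cdot)\|_{L^2}\lesssim(1+t)^{-n/2-1}\ell_\delta(t)\|(v,v_t)(0,\cdot)\|_{L^1\cap L^2}$, with the logarithmic factor $\ell_\delta$ appearing exactly at $\widetilde\mu_1=n+2$, i.e. $\delta=(n+1)^2$; the $L^1$ regularity of the data is supplied for free by the embedding $L^2_{\psi(0,\cdot)}\hookrightarrow L^1\cap L^2$ of the Remark, and undoing the scaling yields the first two displayed estimates (with the same $\ell_\delta$). \emph{(ii) Weighted estimates.} These I would prove directly on \eqref{CP semilinear 1} by the exponentially weighted energy method of \cite{IkeTa05,D13}: multiply the equation by $\mathrm{e}^{2\sigma\psi(t,\cdot)}u_t$ (and, in combination, by $\mathrm{e}^{2\sigma\psi(t,\cdot)}u$), integrate in $x$, and note that $-\psi_t=\tfrac{\mu_1|x|^2}{(1+t)^3}\ge0$ and $-(m^2)'=\tfrac{2\mu_2^2}{(1+t)^3}\ge0$ generate good terms, while the only dangerous term $2\sigma\int\mathrm{e}^{2\sigma\psi}u_t\,\nabla\psi\!\cdot\!\nabla u$ is absorbed by the dissipation thanks to the pointwise identity $|\nabla\psi|^2=-b(t)\,\psi_t$ (which holds precisely because $\psi$ carries the damping coefficient $\mu_1$), provided $\sigma$ lies in an admissible range containing $\sigma=1$. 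This gives $\|(\nabla u,u_t)(t,\cdot)\|_{L^2_{\psi(t,\cdot)}}\lesssim\|(u_0,u_1)\|_{\mathcal A}$, after which $\|u(t,\cdot)\|_{L^2_{\psi(t,\cdot)}}\lesssim(1+t)\|(u_0,u_1)\|_{\mathcal A}$ follows simply by integrating $\|u_t(s,\cdot)\|_{L^2_{\psi(s,\cdot)}}$ over $s\in[0,t]$.

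With the linear estimates at hand I would close by a bootstrap. Put
\begin{align*}
M(t):=\sup_{0\le s\le t}\Big[&(1+s)^{\frac n2+\gamma}\|u(s,\cdot)\|_{L^2}+(1+s)^{\frac n2+\gamma+1}\ell_\delta(s)^{-1}\|(\nabla u,u_t)(s,\cdot)\|_{L^2}\\
&+(1+s)^{-1}\|u(s,\cdot)\|_{L^2_{\psi(s,\cdot)}}+\|(\nabla u,u_t)(s,\cdot)\|_{L^2_{\psi(s,\cdot)}}\Big].
\end{align*}
Via Duhamel's formula the non-linear contribution to each norm is an integral over $[0,t]$ of the linear kernel of the effective equation applied to $|u(s,\cdot)|^p$, for which one uses
\begin{align*}
\big\|\mathrm{e}^{\sigma\psi(s,\cdot)}|u(s,\cdot)|^p\big\|_{L^2}=\big\|\mathrm{e}^{\frac\sigma p\psi(s,\cdot)}u(s,\cdot)\big\|_{L^{2p}}^p\lesssim\|u(s,\cdot)\|_{H^1_{\psi(s,\cdot)}}^p ,
\end{align*}
the last step by the Sobolev embedding $H^1\hookrightarrow L^{2p}$ (valid for all $p>1$ if $n\le2$ and for $p\le\tfrac n{n-2}$ if $n\ge3$ — this, rather than also $p\ge2$, being the only restriction, in contrast to the Gagliardo--Nirenberg approach of \cite{NunPalRei16}), using $\nabla(\mathrm{e}^{\frac\sigma p\psi}u)=\mathrm{e}^{\frac\sigma p\psi}(\nabla u+\tfrac\sigma p u\nabla\psi)$ and $\mathrm{e}^{(\frac\sigma p-\sigma)\psi}|\nabla\psi|\lesssim(1+s)^{-1}$; the $L^1$ and $L^2$ norms of $|u(s,\cdot)|^p$ needed for part (i) then follow from $\||u|^p\|_{L^1\cap L^2}\lesssim(1+s)^{n/2}\|\mathrm{e}^{\sigma\psi(s,\cdot)}|u|^p\|_{L^2}$, again by the Remark. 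Inserting $\||u(s,\cdot)|^p\|_{\cdots}\lesssim(1+s)^{\cdots}M(s)^p$ into the Duhamel integrals and integrating in $s$ produces, exactly under $p>p_{\Fuj}(n+\gamma)$ (with the borderline $\delta=(n+1)^2$ responsible for the $\ell_\delta$), the closed inequality $M(t)\le C_0\|(u_0,u_1)\|_{\mathcal A}+C_1 M(t)^p$ on $[0,T_m)$. For $\|(u_0,u_1)\|_{\mathcal A}\le\varepsilon_0$ small this forces $M(t)\le 2C_0\varepsilon_0$ uniformly in $t<T_m$, so in particular $\|u(t,\cdot)\|_{H^1_{\psi(t,\cdot)}}+\|u_t(t,\cdot)\|_{L^2_{\psi(t,\cdot)}}$ stays bounded; by Theorem~\ref{thm local existence} we conclude $T_m=\infty$, uniqueness is inherited from that theorem, and the four displayed estimates are read off from $M(t)\lesssim\varepsilon_0$.

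I expect the main difficulty to be twofold. First, the time-decay bookkeeping in the Duhamel integrals: one must split $[0,t]$ into $[0,t/2]$ and $[t/2,t]$, apply on each half the appropriate ($L^1\cap L^2$ or $L^2$) estimate together with the time-dependent kernel of the effective equation, and check that every resulting integral converges with the claimed rate — this is precisely where $p>p_{\Fuj}(n+\gamma)$ is sharp and where the logarithmic loss at $\delta=(n+1)^2$ is generated. Second, the weighted energy estimate itself: verifying that the dangerous cross term is genuinely absorbed for $\sigma=1$ (the estimate is borderline there, and one leans on the extra positive terms from $-\psi_t$ and $-(m^2)'$ and on combining the $u_t$- and $u$-multipliers), and tracking how the weighted norms feed the non-linear estimate without losing powers of $(1+t)$.
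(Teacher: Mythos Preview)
Your overall architecture is exactly that of the paper: the same contradiction/continuation argument, the same norm $M(t)$ on $X(T)$, the unweighted part handled by Duhamel with the linear estimates of Propositions~\ref{Prop Lin Estim} and~\ref{Prop Lin Estim v_0=0 for t>s}, and the weighted part by the energy identity \eqref{fundamental equality 1}--\eqref{fundamental equality 2}. Two points, however, deserve attention.

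\textbf{The essential gap: no interpolation between weighted and unweighted gradient.} Your non-linear estimate
\[
\big\|\mathrm{e}^{\frac{\sigma}{p}\psi(s,\cdot)}u(s,\cdot)\big\|_{L^{2p}}^{p}\lesssim \|u(s,\cdot)\|_{H^{1}_{\psi(s,\cdot)}}^{p}
\]
uses only the \emph{weighted} part of $M$, which does not decay (indeed $\|u\|_{H^{1}_{\psi}}\lesssim (1+s)M(s)$). Feeding this into the Duhamel integral one obtains, at best,
\[
\int_0^t (1+s)^{\gamma+1+\frac n2+p(1-\theta(2p))}\,ds\cdot M(t)^p,
\]
and the exponent here is \emph{not} rendered $<-1$ by the hypothesis $p>p_{\Fuj}(n+\gamma)$; the integral diverges and the bootstrap fails. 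The paper closes this precisely through Lemma~\ref{2 lemma GN with weight}, which interpolates
\[
\|\mathrm{e}^{\eta\psi(s,\cdot)}u(s,\cdot)\|_{L^{2p}}\lesssim (1+s)^{1-\theta(2p)}\|\nabla u(s,\cdot)\|_{L^2}^{\,1-\eta}\,\|\mathrm{e}^{\psi(s,\cdot)}\nabla u(s,\cdot)\|_{L^2}^{\,\eta},
\]
so that the \emph{unweighted} gradient decay $(1+s)^{-\frac n2-\gamma-1}$ enters the non-linear bound with weight $1-\eta$. Letting $\eta\downarrow 0$ turns the integrability condition into exactly $p>p_{\Fuj}(n+\gamma)$. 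Without this interpolation (or an equivalent device) your scheme cannot reach the critical exponent.

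\textbf{A secondary omission in the weighted part.} You write that the energy multiplier gives $\|(\nabla u,u_t)\|_{L^2_\psi}\lesssim \|(u_0,u_1)\|_{\mathcal A}$ and then pass to Duhamel for the non-linear contribution ``to each norm''. But there is no Duhamel-type weighted linear estimate here; the weighted energy identity \eqref{fundamental equality 2} already contains the non-linearity and produces the extra term $\int_0^t\!\int |\psi_t|\,\mathrm{e}^{2\psi}|u|^{p+1}$, which the paper controls in Lemma~\ref{lemma before glob exist with exp weight} by the $L^{p+1}$ norm $\|\mathrm{e}^{(\eta+\frac{2}{p+1})\psi}u\|_{L^{p+1}}$ and, again via Lemma~\ref{2 lemma GN with weight}, by $\|u\|_{X(T)}^{(p+1)/2}$. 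This is why the final closed inequality in the paper is
\[
\|u\|_{X(T)}\le C_0\big(\varepsilon_0+\varepsilon_0^{\frac{p+1}{2}}\big)+C_1\|u\|_{X(T)}^{\frac{p+1}{2}}+C_2\|u\|_{X(T)}^{p},
\]
with the additional power $\tfrac{p+1}{2}$ that is missing from your $M(t)\le C_0\varepsilon_0+C_1 M(t)^p$. This is not fatal once the interpolation is in place, but it should be accounted for.
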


\begin{rmk} Let us underline that the decay rates in the unweighted energy estimates are the same found in \cite[Theorem 2.2]{NunPalRei16}. Nevertheless, thanks to the approach with weighted estimates, we can avoid the restriction $p\geq 2$ and, consequently, we get a result for every space dimension $n\geq 1$ and for the same range of allowed values of $\delta$ and $p$.
\end{rmk}

\begin{thm}[Blow-up]\label{thm blow up} Let $n\geq 1$ and $\mu_1,\mu_2^2$ be nonnegative constants such that $\delta\geq 0$. Let us assume that $u\in \mathcal{C}^2([0,T)\times \mathbb{R}^n)$ is a solution to \eqref{CP semilinear 1}, where $T>0$ is the life-span of $u$, with compactly supported initial data such that
\begin{align}
&\int_{\mathbb{R}^n}u_0(x)dx>0\, ,\qquad \int_{\mathbb{R}^n}\left(u_1(x)+\left(\tfrac{\mu_1-1}{2}-\tfrac{\sqrt{\delta}}{2}\right)u_0(x)\right)dx>0\, . \label{positivity assumption data}
\end{align} 

If the exponent $p$ of the non-linearity satisfies 
\begin{align}
1<p\leq p_{\Fuj}\left(n+\tfrac{\mu_1-1}{2}-\tfrac{\sqrt{\delta}}{2}\right),\label{upper bound p blow-up}
\end{align}  then $T<\infty$, that is the solutions $u$ blows up in finite time.
\end{thm}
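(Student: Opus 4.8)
The plan is to use the classical test-function method adapted to the scale-invariant coefficients. First I would reduce the problem via the change of unknown that trivializes the lower-order terms: set $v(t,x)=(1+t)^{\gamma}u(t,x)$ with $\gamma=\tfrac{\mu_1-1}{2}-\tfrac{\sqrt{\delta}}{2}$, the smaller root of $\gamma^2-(\mu_1-1)\gamma+\mu_2^2=0$. Under this substitution (which is legitimate because $\delta\ge 0$ guarantees real exponents) the equation \eqref{CP semilinear 1} becomes, after multiplying through by $(1+t)^{\gamma(p-1)}$ or rather keeping track carefully, a wave equation of the form $v_{tt}-\Delta v+\tfrac{\tilde\mu_1}{1+t}v_t=(1+t)^{-\gamma(p-1)}|v|^p$ with a \emph{new} damping coefficient $\tilde\mu_1=\mu_1-2\gamma=1+\sqrt{\delta}$ and no mass term. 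Crucially, since $\delta\ge (n+1)^2$ is \emph{not} assumed here (only $\delta\ge 0$), I should instead work directly with the functional $Y(t):=\int_{\mathbb{R}^n}u(t,x)\,dx$ or a weighted-in-time version of it, so that the reduction is only conceptual bookkeeping for where the shifted Fujita exponent $p_{\Fuj}(n+\gamma)$ comes from.

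The core steps: let $\varphi\in C_0^\infty$ be a test function and consider $I[u](t)=\int_{\mathbb{R}^n}u(t,x)\,\varphi(x)\,dx$, or more simply (since data are compactly supported and finite speed of propagation gives $\supp u(t,\cdot)\subseteq B_{R+t}(0)$) just $Y(t)=\int_{\mathbb{R}^n}u(t,x)\,dx$. Integrating the PDE in $x$ kills the Laplacian and yields the ODE
\begin{align}\label{blowup-ode}
Y''(t)+\frac{\mu_1}{1+t}Y'(t)+\frac{\mu_2^2}{(1+t)^2}Y(t)=\int_{\mathbb{R}^n}|u(t,x)|^p\,dx.
\end{align}
The homogeneous part of the left-hand side is an Euler equation with characteristic exponents $-\gamma$ and $-\gamma'$ where $\gamma'=\tfrac{\mu_1-1}{2}+\tfrac{\sqrt{\delta}}{2}$; the sign conditions \eqref{positivity assumption data} are exactly designed so that the solution of the corresponding \emph{linear} Cauchy problem for $Y$ stays positive and behaves like $(1+t)^{-\gamma}$. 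Concretely I would set $F(t):=(1+t)^{\gamma}Y(t)$, check that \eqref{positivity assumption data} gives $F(0)>0$ and $F'(0)>0$, and derive that $F$ satisfies $((1+t)^{\sqrt{\delta}}F'(t))'=(1+t)^{\gamma+\sqrt{\delta}}\int|u|^p\,dx\ge 0$; hence $F'(t)\ge c_0(1+t)^{-\sqrt{\delta}}$ and therefore $F(t)\gtrsim 1$, giving the lower bound $Y(t)\gtrsim(1+t)^{-\gamma}$ for all $t$ in the life-span.

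Next I would close the argument by a Kato-type iteration / Riccati obstruction. Using Hölder's inequality on the ball $B_{R+t}$ of volume $\approx(1+t)^n$,
\begin{align}\label{holder-lower}
\int_{\mathbb{R}^n}|u(t,x)|^p\,dx\ \geq\ c\,(1+t)^{-n(p-1)}\,|Y(t)|^p\ \gtrsim\ (1+t)^{-n(p-1)-\gamma p}.
\end{align}
Feeding this back into \eqref{blowup-ode} rewritten as $((1+t)^{\sqrt{\delta}}F'(t))'\gtrsim (1+t)^{\gamma+\sqrt{\delta}-n(p-1)-\gamma p}=(1+t)^{\sqrt{\delta}-n(p-1)-\gamma(p-1)}$, and integrating twice, one finds that $F(t)$ — hence $Y(t)$ — grows at least polynomially unless the exponent balance fails; the threshold where the double integration ceases to give a consistent bound is precisely $n(p-1)+\gamma(p-1)=2$, i.e. $p=1+\tfrac{2}{n+\gamma}=p_{\Fuj}(n+\gamma)$. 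For $1<p<p_{\Fuj}(n+\gamma)$ a straightforward bootstrap (improving the exponent of the lower bound of $Y$ at each step, as in Kato's lemma) forces $Y$ to blow up in finite time; for the endpoint $p=p_{\Fuj}(n+\gamma)$ I would instead use a slicing argument (a logarithmically growing lower bound from a careful choice of time-dependent test function, as in Zhou or in \cite{DabbLucMod}) to reach the same contradiction.

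The main obstacle I anticipate is handling the \emph{critical} case $p=p_{\Fuj}(n+\gamma)$ cleanly: the polynomial iteration only yields a logarithmic divergence there, so one needs either a sharp weighted test function $\varphi$ concentrated near the light cone together with the finite-propagation-speed support bound, or a second functional capturing the $\log$ growth; balancing these against the scale-invariant coefficients requires care. A secondary technical point is justifying all integrations by parts and the integral identities in the $\mathcal{C}^2$ class with only compactly supported (not smooth) data — but finite speed of propagation makes $u(t,\cdot)$ compactly supported for each $t$, so this is routine. Everything else (the ODE comparison, Hölder, Kato's lemma) is standard once the shift $\gamma$ is correctly identified from $\delta$.
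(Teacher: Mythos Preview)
Your reduction is exactly the paper's: set $\gamma=\tfrac{\mu_1-1}{2}-\tfrac{\sqrt{\delta}}{2}$, pass to $v=(1+t)^\gamma u$, define $F(t)=\int v\,dx=(1+t)^\gamma Y(t)$, and use finite propagation speed plus Jensen/H\"older to obtain the ordinary differential inequality
\[
\ddot F(t)+\frac{1+\sqrt{\delta}}{1+t}\,\dot F(t)\ \gtrsim\ (1+t)^{-(n+\gamma)(p-1)}\,|F(t)|^p,
\]
with $F(0)>0$, $\dot F(0)>0$ supplied by \eqref{positivity assumption data}. (Minor slip: the integrating factor is $(1+t)^{1+\sqrt{\delta}}$, not $(1+t)^{\sqrt{\delta}}$.) The genuine divergence from the paper is in how you finish. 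You propose a Kato-type bootstrap for the subcritical range and flag the critical case $p=p_{\Fuj}(n+\gamma)$ as an obstacle requiring a separate slicing or weighted-test-function argument. The paper instead proves a single ODI comparison lemma (Lemma~\ref{lemma ODI}, in the spirit of Todorova--Yordanov): it constructs an explicit subsolution $G$ solving $\dot G=\nu(1+t)^{\alpha+1}G^{(p+1)/2}$, checks that $G$ blows up in finite time whenever $\alpha\geq -2$, and shows $F\geq G$ by a first-derivative comparison. The condition \eqref{upper bound p blow-up} is precisely $\alpha:=-(n+\gamma)(p-1)\geq -2$, so the subcritical and critical cases are handled \emph{uniformly} with no iteration and no separate endpoint analysis. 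Your route works but is longer and makes the critical exponent look harder than it is; the paper's ODI lemma dissolves the difficulty you anticipated.
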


\begin{rmk} Comparing the assumptions on $p$ in Theorem \ref{thm blow up} with those of \cite[Theorem 2.4]{NunPalRei16}, we see that solutions to \eqref{CP semilinear 1} blow up in finite times for the same range of $p$, for example in the case in which nontrivial data satisfy
\begin{align*}
u_0\geq 0, \qquad u_1(x)+\left(\tfrac{\mu_1-1}{2}-\tfrac{\sqrt{\delta}}{2}\right)u_0(x) \geq 0.
\end{align*} 

Although, in Theorem \ref{thm blow up} we require as additional condition the compactness of the support of data, the proof itself is interesting. Indeed, this proof is based on a blow-up dynamic for an ordinary differential inequality with polynomial non-linearity. Philosophically, this result for ordinary differential inequalities plays the role that \emph{Kato's Lemma} has in the proof of the blow-up of solutions to the free wave equation for exponents below the Strauss exponent and suitable data (for Kato's Lemma see for example \cite{Zhou03,Yag05,Yor06,DabbLucRei15}). In particular, our approach follows results and ideas used firstly in \cite{TodYor01} and then in \cite{Zhou05,Nishi11}. 
Moreover, thanks to this ordinary differential inequality, we can obtain also an upper-bound for the life-span of the solution.

Dissimilarly, in \cite[Theorem 2.4]{NunPalRei16} the so-called \emph{method of test function} is applied (for further references to this method cf. \cite{Zhang,Waka14}).
\end{rmk}

\section{Overview on our approach}

We plan to apply Duhamel's principle to write the solution to \eqref{CP semilinear 1}. Because the linear equation related to the semi-linear equation in \eqref{CP semilinear 1} is not invariant by time translations, we have to derive estimates for the family of linear parameter dependent Cauchy problems

\begin{align}\label{linear parameter dependent CP}
\begin{cases}
u_{tt}-\Delta u+\frac{\mu_1}{1+t} u_t+\frac{\mu_2^2}{(1+t)^2}u=0, & t>s\geq 0, \,\, x\in \mathbb{R}^n, \\ u(s,x)=u_0(x), & x\in\mathbb{R}^n, \\ u_t(s,x)=u_1(x), & x\in\mathbb{R}^n.
\end{cases}
\end{align} 
Since we use Duhamel's principle in the study of the non-linear problem, the case in which $u_0(x)\equiv 0$ is particularly important. 

We fix now some notations for our problem.
Let us denote by $E_0(t,s,x)$, $ E_1(t,s,x)$ the fundamental solutions to the Cauchy problem \eqref{linear parameter dependent CP}, i.e. the distributional solutions with data $(u_0,u_1)=(\delta_0,0)$ and $(u_0,u_1)=(0,\delta_0)$, respectively, taken at the time $s\geq 0$, where $\delta_0$ is the Dirac distribution in the $x$ variable. Therefore, if $\ast_{(x)}$ denotes the convolution with respect to the $x$ variable, by using the linearity of the equation in \eqref{linear parameter dependent CP}, it is possible to represent the solution to the Cauchy problem \eqref{linear parameter dependent CP} as 
\begin{align*}
u(t,x)=E_0(t,s,x)\ast_{(x)}u_0(x)+E_1(t,s,x)\ast_{(x)} u_1(x).
\end{align*}

Now we clarify the type of solutions to \eqref{CP semilinear 1} we are interested in. 

According to Duhamel's principle, we get 
\begin{align*}
u(t,x)=\int_0^t E_1(t,s,x)\ast_{(x)} F(s,x) ds
\end{align*} as solution to the inhomogeneous Cauchy problem 
\begin{align*}
\begin{cases}
u_{tt}-\Delta u+\frac{\mu_1}{1+t} u_t+\frac{\mu_2^2}{(1+t)^2}u=F(t,x), & t> 0, \,\, x\in \mathbb{R}^n, \\ u(0,x)=0, & x\in\mathbb{R}^n, \\ u_t(0,x)=0, & x\in\mathbb{R}^n.
\end{cases}
\end{align*} 

Hence, we consider as solutions to \eqref{CP semilinear 1} on $(0,T)\times \mathbb{R}^n$ any fixed point of the operator $N$ defined as follows:
\begin{align}
u\in X(T)\to Nu(t,x) &:=E_0(t,s,x)\ast_{(x)}u_0(x)+E_1(t,s,x)\ast_{(x)} u_1(x) \notag \\&\quad +\int_0^t E_1(t,s,x)\ast_{(x)} |u(s,x)|^p ds \label{definition N operator}
\end{align} for a properly chosen space $X(T)$, where $T$ denotes the life span of the solution.

Consequently both local or global (in time) existence results are based on the following type of inequalities:
\begin{align}
\| Nu \|_{X(T)}&\leq C_0(T) + C_1(T) \|u\|_{X(T)}^p , \label{contraction prociple 1st ineq}\\
\| Nu -Nv \|_{X(T)}&\leq  C_2(T) \|u-v\|_{X(T)}\big(\|u\|_{X(T)}^{p-1}+\|v\|_{X(T)}^{p-1} \big), \label{contraction prociple 2nd ineq}
\end{align}
where $X(T)$ is a suitable Banach space and $C_0$ depends on the norm of initial data.

Indeed, if  $C_0(T)$ is bounded as $T\to 0$ and $C_1(T),C_2(T)\to 0$ as $T\to 0$ , then we obtain from \eqref{contraction prociple 1st ineq} and \eqref{contraction prociple 2nd ineq} a local (in time) existence result for large data thanks to Banach's fixed point theorem.

Similarly, whether $C_0(T)$ is constant and $C_1(T),C_2(T)$ are bounded as $T\to \infty$, it follows a global (in time) existence result for small data.

 Nevertheless, for Theorem \ref{thm glob exi exp data}, as we announced in the introduction, we do not follow this standard approach. Roughly speaking, after proving Theorem \ref{thm local existence}, we will suppose by contradiction that our local in time solution can not be prolonged for all times, regardless of the smallness of initial data. Then, considering a suitable norm on $X(T)$, we show the uniform boundedness of the weighted energy (cf. Section \ref{LE Section}), provided the smallness of data. However this contradicts the assumption of not unlimited prolongability for any times of the solution.
 
This means, in particular, that in Theorem \ref{thm glob exi exp data} the global in time solutions we are interested in are solutions to the integral equation 
 \begin{align*}
 u(t,x)&=E_0(t,s,x)\ast_{(x)}u_0(x)+E_1(t,s,x)\ast_{(x)} u_1(x)\\& \quad +\int_0^t E_1(t,s,x)\ast_{(x)} |u(s,x)|^p ds
 \end{align*} which can be extended for all positive times.

From the previous considerations it follows that the difficulties in the proof of a local or global existence result for large or small data, respectively, is reduced to the choice of the space $X(T)$ and to the verification of \eqref{contraction prociple 1st ineq} and \eqref{contraction prociple 2nd ineq}. 

In this paper, we restrict our consideration to the space
\begin{align*}
X(T)=\mathcal{C}\big([0,T],H^1_{\psi(t,\cdot)}\big)\cap \mathcal{C}^1\big([0,T],L^2_{\psi(t,\cdot)}\big),
\end{align*} both in Theorem \ref{thm local existence} and in Theorem \ref{thm glob exi exp data}. As we will see, the crucial difference lies in the choice of the norm for $X(T)$ (cf. Section \ref{LE Section} and Section \ref{SDGE Section}).

Let us spend now few words on the function $\psi$, defined in \eqref{definition of psi}, and its useful properties which will be helpful in the proof of our main results. 

This function satisfies the following relations:
\begin{align}
|\nabla \psi(t,x)|^2+b(t)\psi_t(t,x)&=0, \label{prop grad psi}\\
\Delta\psi(t,x)&=\frac{n\mu_1}{(1+t)^2}>0\label{prop lapl psi}
\end{align}
for any $t\geq 0$ and $x\in\mathbb{R}^n$. 

The equation \eqref{prop grad psi} is related to the symbol of the linear parabolic equation $b(t)u_t-\Delta u=0$, that is, we have in mind the parabolic effect when we consider the weight $\mathrm{e}^{\psi(t,x)}$. 

 In Sections \ref{LE Section} and \ref{SDGE Section} we will employ several times the following two fundamental relations.
 
  The first one is the following equality:
\begin{align}
&\mathrm{e}^{2\psi}u_t\left(u_{tt}-\Delta u +b(t)u_t+m^2(t)u\right) =\frac{\partial}{\partial t}\left(\frac{\mathrm{e}^{2\psi}}{2}\left(u_t^2+|\nabla u|^2+m^2(t)u^2\right)\right)\notag\\ &\quad -\diver(\mathrm{e}^{2\psi}u_t\nabla u)+\frac{\mathrm{e}^{2\psi}}{\psi_t}u_t^2\left(|\nabla\psi|^2+b(t)\psi_t\right)-\frac{\mathrm{e}^{2\psi}}{\psi_t}|u_t\nabla\psi-\psi_t\nabla u|^2\notag\\ & \quad -\psi_t \mathrm{e}^{2\psi}(u_t^2+m^2(t)u^2)-\frac{1}{2}\mathrm{e}^{2\psi}u^2\frac{d}{dt}m^2(t).\label{fundamental equality 1}
\end{align} In order to verify \eqref{fundamental equality 1} one can make use of the following relations:
\begin{align*}
\mathrm{e}^{\psi}u_t u_{tt}& 
=\frac{\partial}{\partial t}\left(\frac{\mathrm{e}^{2\psi}}{2}u_t^2\right)-\psi_t \mathrm{e}^{2\psi}u_t^2;\\
\mathrm{e}^{\psi}u_t \Delta u &
=\diver(\mathrm{e}^{2\psi}u_t\nabla u)-2u_t \mathrm{e}^{2\psi}\nabla\psi\cdot \nabla u-\frac{\partial}{\partial t}\left(\frac{\mathrm{e}^{2\psi}}{2}|\nabla u|^2\right)\\& \quad +\psi_t \mathrm{e}^{2\psi}|\nabla u|^2;\\
2\mathrm{e}^{2\psi}u_t\nabla \psi \cdot \nabla u &
=\frac{\mathrm{e}^{2\psi}}{\psi_t}\left(u_t^2|\nabla\psi|^2+\psi_t^2|\nabla u|^2-|u_t\nabla\psi-\psi_t\nabla u|^2\right);\\
\mathrm{e}^{\psi}u_t m^2(t)u& 
=\frac{\partial}{\partial t}\left(\frac{\mathrm{e}^{2\psi}}{2}m^2(t)u^2\right)-\psi_t \mathrm{e}^{2\psi}m^2(t)u^2-\frac{\mathrm{e}^{2\psi}}{2}u^2\frac{d}{dt}m^2(t).
\end{align*}
The second one is the upcoming inequality. If $u$ is solution of the equation then \eqref{CP semilinear 1}, since
\begin{align*}
\mathrm{e}^{2\psi}u_t|u|^p
=\frac{\partial}{\partial t}\left(\mathrm{e}^{2\psi}\frac{|u|^pu}{p+1}\right)-2\psi_t \mathrm{e}^{2\psi}\frac{|u|^pu}{p+1}
\end{align*} from \eqref{prop grad psi} and \eqref{fundamental equality 1} we get immediately
\begin{align}
\frac{\partial}{\partial t}\left(\frac{\mathrm{e}^{2\psi}}{2}\left(u_t^2+|\nabla u|^2+m^2(t)u^2\right)-\mathrm{e}^{2\psi}\frac{|u|^pu}{p+1}\right)
 &\leq \diver(\mathrm{e}^{2\psi}u_t\nabla u)\notag\\ &\quad-2\psi_t \mathrm{e}^{2\psi}\frac{|u|^pu}{p+1},\label{fundamental equality 2}
\end{align} where we used that $\psi_t\leq 0$ and the fact that $m^2(t)$ is a strictly decreasing function.

 In Sections \ref{LE Section} and \ref{SDGE Section} a fundamental role in the derivation of energy estimates will be played by \eqref{fundamental equality 1} and \eqref{fundamental equality 2}.

\section{Local existence: proof of Theorem \ref{thm local existence}}\label{LE Section}

In the proof of Theorems \ref{thm local existence} and \ref{thm glob exi exp data} we make use of the following inequalities. Although these are slight modifications of well known inequalities proved in \cite{TodYor01, IkeTa05}, for sake of self-completeness we include also their proofs.

\begin{lemma}\label{1 lemma GN with weight} Let $\sigma>0$, $t\geq 0$ and $v\in H^1_{\sigma\psi(t,\cdot)}$. Then it holds 
\begin{align*}
\sigma\mu_1 n(1+t)^{-2}\|\mathrm{e}^{\sigma\psi(t,\cdot)}v\|^2_{L^2}+\|\nabla(\mathrm{e}^{\sigma\psi(t,\cdot)}v)\|^2_{L^2}\leq \|\mathrm{e}^{\sigma\psi(t,\cdot)}\nabla v\|^2_{L^2}.
\end{align*}
\end{lemma}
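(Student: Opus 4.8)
The plan is to reduce the whole statement to a pointwise algebraic identity for $w:=\mathrm{e}^{\sigma\psi(t,\cdot)}v$ followed by a single integration by parts. Since $\psi(t,\cdot)$ is smooth in $x$, the distributional gradient of $w$ is
\[
\nabla w=\mathrm{e}^{\sigma\psi(t,\cdot)}\bigl(\nabla v+\sigma v\,\nabla\psi(t,\cdot)\bigr),
\]
so that $\mathrm{e}^{\sigma\psi(t,\cdot)}\nabla v=\nabla w-\sigma w\,\nabla\psi(t,\cdot)$. Taking squares and integrating over $\mathbb{R}^n$,
\[
\|\mathrm{e}^{\sigma\psi(t,\cdot)}\nabla v\|_{L^2}^2=\|\nabla w\|_{L^2}^2-2\sigma\!\int_{\mathbb{R}^n}\! w\,\nabla\psi(t,\cdot)\cdot\nabla w\,dx+\sigma^2\!\int_{\mathbb{R}^n}\! w^2|\nabla\psi(t,\cdot)|^2\,dx .
\]

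The next step is to rewrite the cross term. Using $w\,\nabla w=\tfrac12\nabla(w^2)$, an integration by parts together with the identity $\Delta\psi(t,x)=n\mu_1(1+t)^{-2}$ from \eqref{prop lapl psi} gives
\[
-2\sigma\!\int_{\mathbb{R}^n}\! w\,\nabla\psi(t,\cdot)\cdot\nabla w\,dx=\sigma\!\int_{\mathbb{R}^n}\! w^2\,\Delta\psi(t,\cdot)\,dx=\sigma\mu_1 n(1+t)^{-2}\|w\|_{L^2}^2 .
\]
Substituting back and recalling $w=\mathrm{e}^{\sigma\psi(t,\cdot)}v$ yields the exact identity
\[
\|\mathrm{e}^{\sigma\psi(t,\cdot)}\nabla v\|_{L^2}^2=\|\nabla(\mathrm{e}^{\sigma\psi(t,\cdot)}v)\|_{L^2}^2+\sigma\mu_1 n(1+t)^{-2}\|\mathrm{e}^{\sigma\psi(t,\cdot)}v\|_{L^2}^2+\sigma^2\bigl\|\,|\nabla\psi(t,\cdot)|\,\mathrm{e}^{\sigma\psi(t,\cdot)}v\bigr\|_{L^2}^2 ,
\]
and the asserted inequality follows simply by discarding the last, nonnegative, term. (Note in passing that $|\nabla\psi(t,x)|^2+b(t)\psi_t(t,x)=0$ from \eqref{prop grad psi} is the companion identity used elsewhere, but here only \eqref{prop lapl psi} is needed.)

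The only delicate point — and the step I expect to be the main obstacle — is the rigorous justification of the integration by parts, since $\nabla\psi(t,x)=\mu_1 x(1+t)^{-2}$ grows linearly in $|x|$ and is not compactly supported. I would handle this by an exhaustion argument: insert a cutoff $\chi_R(x)=\chi(x/R)$ with $\chi\in\mathcal{C}^\infty$, $\chi\equiv1$ on $B_1(0)$, $\supp\chi\subset B_2(0)$, integrate by parts on $B_{2R}(0)$, and estimate the error term by
\[
\Bigl|\int_{\mathbb{R}^n}w^2\,\nabla\chi_R\cdot\nabla\psi(t,\cdot)\,dx\Bigr|\lesssim (1+t)^{-2}\!\int_{R\le|x|\le 2R}\! w^2\,dx\xrightarrow[R\to\infty]{}0 ,
\]
where we used $|\nabla\chi_R|\lesssim R^{-1}$ and $|\nabla\psi(t,x)|\lesssim R(1+t)^{-2}$ on the annulus, and $w\in L^2$. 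The hypothesis $v\in H^1_{\sigma\psi(t,\cdot)}$ together with finiteness of the left-hand side of the claim forces $\nabla(\mathrm{e}^{\sigma\psi(t,\cdot)}v)\in L^2$, hence $w\in H^1$, so all the manipulations above are legitimate; alternatively one first proves the identity for $v\in\mathcal{C}^\infty_c$ and then passes to the limit by density. This is precisely the computation behind the weighted Gagliardo–Nirenberg-type inequalities of \cite{TodYor01,IkeTa05}, adapted to the present choice of $\psi$.
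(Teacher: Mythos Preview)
Your proof is correct and follows essentially the same route as the paper: set $w=\mathrm{e}^{\sigma\psi}v$, expand $\|\mathrm{e}^{\sigma\psi}\nabla v\|_{L^2}^2=\|\nabla w-\sigma w\nabla\psi\|_{L^2}^2$, integrate the cross term by parts using $\Delta\psi=n\mu_1(1+t)^{-2}$, and drop the nonnegative $\sigma^2\|\,|\nabla\psi|\,w\|_{L^2}^2$ term. The only difference is that you spell out the cutoff/exhaustion argument justifying the integration by parts, which the paper leaves implicit.
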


\begin{proof}
We put $f=\mathrm{e}^{\sigma\psi}v$. Then
\begin{align*}
\nabla v=\nabla(\mathrm{e}^{-\sigma\psi}f)=-\sigma \mathrm{e}^{-\sigma\psi}f\nabla\psi+\mathrm{e}^{-\sigma\psi}\nabla f ,\quad  \mathrm{e}^{\sigma\psi}\nabla v=\nabla f-\sigma f\nabla \psi,
\end{align*}
respectively.

 Hence, 
\begin{align*}
\|\mathrm{e}^{\sigma\psi(t,\cdot)}\nabla v\|^2_{L^2}&=\|\nabla f(t,\cdot)\|^2_{L^2}\!+\!\sigma^2 \|(f\nabla \psi)(t,\cdot)\|^2_{L^2}\\& \qquad -2\sigma\big(\nabla f(t,\cdot), (f\nabla\psi)(t,\cdot)\big)_{L^2}. 
\end{align*}
 Therefore, integrating by parts we get
\begin{align*}
\big(\nabla f(t,\cdot), (f\nabla\psi)(t,\cdot)\big)_{L^2}&
=-\frac{1}{2}\int_{\mathbb{R}^n}f^2(t,x) \Delta\psi(t,x) dx.
\end{align*} 

Consequently, from \eqref{prop lapl psi} we obtain
\begin{align*}
\|\mathrm{e}^{\sigma\psi(t,\cdot)}\nabla v\|^2_{L^2}& 
\geq \|\nabla f(t,\cdot)\|^2_{L^2}+\sigma\mu_1 n(1+t)^{-2}\|f(t,\cdot)\|^2_{L^2},
\end{align*} which is exactly the searched estimate. 
\end{proof}

\begin{lemma}\label{2 lemma GN with weight} Let $\theta(q)=n\big(\frac{1}{2}-\frac{1}{q}\big)$ with $\theta\in [0,1]$ and let $\sigma\in (0,1]$, $t\geq 0$.\\ If $v\in H^1_{\psi(t,\cdot)}$, then it holds the inequality 
\begin{align*}
\|\mathrm{e}^{\sigma\psi(t,\cdot)}v\|_{L^q}\leq C (1+t)^{1-\theta(q)}\|\nabla v\|_{L^2}^{1-\sigma}\|\mathrm{e}^{\psi(t,\cdot)}\nabla v\|_{L^2}^{\sigma},
\end{align*} for a nonnegative constant $C$  that does not depend on $v$ and $t$.
\end{lemma}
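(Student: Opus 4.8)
The strategy is to apply the classical Gagliardo--Nirenberg inequality to the single function $f=\mathrm{e}^{\sigma\psi(t,\cdot)}v$ and then to rewrite the two resulting norms $\|\nabla f\|_{L^2}$ and $\|f\|_{L^2}$ in terms of the quantities on the right-hand side, using Lemma \ref{1 lemma GN with weight} to produce the time weight $(1+t)$ and a Hölder interpolation on the exponential weight to split the powers $1-\sigma$ and $\sigma$. First I would observe that $f\in H^1$: since $0<\sigma\le 1$ and $\psi\ge 0$ we have $\mathrm{e}^{\sigma\psi}\le\mathrm{e}^{\psi}$, so $f\in L^2$ and $\mathrm{e}^{\sigma\psi}\nabla v\in L^2$ because $v\in H^1_{\psi(t,\cdot)}$; and since $\nabla f=\mathrm{e}^{\sigma\psi}(\nabla v+\sigma v\nabla\psi)$ with $|\nabla\psi(t,x)|=\mu_1|x|(1+t)^{-2}$ dominated, for each fixed $t$, by a constant multiple of $\mathrm{e}^{(1-\sigma)\psi(t,\cdot)}$ (trivially when $\sigma=1$), the gradient $\nabla f$ lies in $L^2$ as well. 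Hence the Gagliardo--Nirenberg inequality yields, under the assumption $\theta(q)\in[0,1]$,
\begin{align*}
\|\mathrm{e}^{\sigma\psi(t,\cdot)}v\|_{L^q}=\|f\|_{L^q}\le C\,\|\nabla f\|_{L^2}^{\theta(q)}\,\|f\|_{L^2}^{1-\theta(q)},
\end{align*}
with $C=C(n,q)$.

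Next I would invoke Lemma \ref{1 lemma GN with weight}. Dropping in turn the two nonnegative summands on its left-hand side gives $\|\nabla f\|_{L^2}\le\|\mathrm{e}^{\sigma\psi(t,\cdot)}\nabla v\|_{L^2}$ and $\|f\|_{L^2}\le(\sigma\mu_1 n)^{-1/2}(1+t)\,\|\mathrm{e}^{\sigma\psi(t,\cdot)}\nabla v\|_{L^2}$, where the constant is independent of $t$ and $v$. Substituting into the Gagliardo--Nirenberg bound, the powers of $\|\mathrm{e}^{\sigma\psi(t,\cdot)}\nabla v\|_{L^2}$ add up to $1$ and the exponent of $(1+t)$ becomes exactly $1-\theta(q)$, so
\begin{align*}
\|\mathrm{e}^{\sigma\psi(t,\cdot)}v\|_{L^q}\le C\,(1+t)^{1-\theta(q)}\,\|\mathrm{e}^{\sigma\psi(t,\cdot)}\nabla v\|_{L^2}.
\end{align*}
Finally, writing $\mathrm{e}^{2\sigma\psi}|\nabla v|^2=(|\nabla v|^2)^{1-\sigma}(\mathrm{e}^{2\psi}|\nabla v|^2)^{\sigma}$ and applying Hölder's inequality with conjugate exponents $\tfrac{1}{1-\sigma}$ and $\tfrac{1}{\sigma}$ (the case $\sigma=1$ being trivial) gives $\|\mathrm{e}^{\sigma\psi(t,\cdot)}\nabla v\|_{L^2}\le\|\nabla v\|_{L^2}^{1-\sigma}\,\|\mathrm{e}^{\psi(t,\cdot)}\nabla v\|_{L^2}^{\sigma}$, which together with the previous display is precisely the claimed inequality.

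I do not expect a serious obstacle: the estimate is essentially the combination of Lemma \ref{1 lemma GN with weight}, the scalar Gagliardo--Nirenberg inequality, and one Hölder interpolation. The only points requiring a little care are the verification that $f\in H^1$ before applying Gagliardo--Nirenberg, which rests on the elementary Gaussian-type domination $|x|\lesssim_{t}\mathrm{e}^{(1-\sigma)\psi(t,x)}$, and the bookkeeping ensuring that the constant from Lemma \ref{1 lemma GN with weight} carries no hidden $t$-dependence and that the Hölder splitting produces exactly the exponents $1-\sigma$ and $\sigma$; the factor $(1+t)^{1-\theta(q)}$ then comes out automatically.
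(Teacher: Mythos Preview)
Your proposal is correct and follows essentially the same route as the paper: apply Gagliardo--Nirenberg to $f=\mathrm{e}^{\sigma\psi}v$, use Lemma~\ref{1 lemma GN with weight} to convert $\|f\|_{L^2}$ and $\|\nabla f\|_{L^2}$ into $(1+t)^{1-\theta(q)}\|\mathrm{e}^{\sigma\psi}\nabla v\|_{L^2}$, and then interpolate via H\"older to obtain the $1-\sigma$ and $\sigma$ split. The only cosmetic difference is the order: the paper establishes the H\"older interpolation $\|\mathrm{e}^{\sigma\psi}\nabla v\|_{L^2}\le\|\nabla v\|_{L^2}^{1-\sigma}\|\mathrm{e}^{\psi}\nabla v\|_{L^2}^{\sigma}$ first (and uses it, together with the analogous estimate for $v$, to conclude $v\in H^1_{\sigma\psi(t,\cdot)}$ and hence $f\in H^1$ via Lemma~\ref{1 lemma GN with weight}), whereas you argue $f\in H^1$ directly from the pointwise bound $|\nabla\psi|\lesssim_t \mathrm{e}^{(1-\sigma)\psi}$ and postpone the H\"older step to the end.
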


\begin{proof}
Let us prove preliminary that $v\in H^1_{\sigma\psi(t,\cdot)}$ for any $\sigma\in (0,1]$.

 By H\"{o}lder's inequality we get
\begin{align}
\|\mathrm{e}^{\sigma\psi(t,\cdot)}\nabla v\|_{L^2}^2&
\leq \|\mathrm{e}^{\psi(t,\cdot)}\nabla v\|^{2\sigma}_{L^2}\|\nabla v\|^{2(1-\sigma)}_{L^2}.\label{2 lemma GN with weight 1 est}
\end{align}
 In the same way 
\begin{align*}
\|\mathrm{e}^{\sigma\psi(t,\cdot)}v\|_{L^2}^2&
\leq \|\mathrm{e}^{\psi(t,\cdot)} v\|^{2\sigma}_{L^2}\| v\|^{2(1-\sigma)}_{L^2}.
\end{align*}

From Lemma \ref{1 lemma GN with weight} we get for the function $f=\mathrm{e}^{\sigma\psi}v$ that $f(t,\cdot)\in H^1$ and
\begin{align}
\|f(t,\cdot)\|_{L^2}&\lesssim (1+t)\|\mathrm{e}^{\sigma\psi(t,\cdot)}\nabla v\|_{L^2},\label{2 lemma GN with weight 2 est}\\ \|\nabla f(t,\cdot)\|_{L^2}&\leq \|\mathrm{e}^{\sigma\psi(t,\cdot)}\nabla v\|_{L^2} \label{2 lemma GN with weight 3 est},
\end{align}
for any $t\geq 0$. 

By the classical Gagliardo-Nirenberg inequality we have 
\begin{align*}
\|f(t,\cdot)\|_{L^q}\lesssim \|f(t,\cdot)\|_{L^2}^{1-\theta(q)}\|\nabla f(t,\cdot)\|_{L^2}^{\theta(q)},
\end{align*} where $\theta(q)=n\big(\frac{1}{2}-\frac{1}{q}\big)$ (see \cite{Fried76}). 

Thus, combining \eqref{2 lemma GN with weight 1 est}, \eqref{2 lemma GN with weight 2 est} and \eqref{2 lemma GN with weight 3 est} with the previous estimate we have
\begin{align*}
\|f(t,\cdot)\|_{L^q}&\lesssim (1+t)^{1-\theta(q)}\|\mathrm{e}^{\sigma\psi(t,\cdot)}\nabla v\|_{L^2} \\ &\leq (1+t)^{1-\theta(q)}\|\mathrm{e}^{\psi(t,\cdot)}\nabla v\|^{\sigma}_{L^2}\|\nabla v\|^{1-\sigma}_{L^2}.
\end{align*} This completes the proof.
\end{proof}

In the proof of Theorem \ref{thm local existence} we will use also the next result, which is a generalization to the non-linear case of Gronwall's lemma (for the proof it is possible to see, for example, \cite{Cord71}).
\begin{lemma}[Bihari's inequality]\label{lemma of gronwall type} Let $k$ be a nonnegative, continuous function, $M$ a real constant and $g$ a continuous, non-decreasing, nonnegative function such that $$G(u)=\int_0^u \frac{ds}{g(s)}$$ is well defined.
Let $y$ be a continuous function such that 
\begin{equation*}y(t)\leq M+\int_0^t k(s) g(y(s))ds \qquad \mbox{ for any} \,\, t\geq 0.
\end{equation*} Then 
\begin{equation*}
G(y(t))\leq G(M)+\int_0^t k(s)ds\qquad \mbox{ for any} \,\, t\geq 0.
\end{equation*}
\end{lemma}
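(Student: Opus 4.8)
The plan is to run the standard comparison argument: bound $y$ from above by the solution $z$ of the associated integral \emph{equation}, convert the resulting relation into a separable differential inequality, and integrate.

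First I would set $z(t):=M+\int_0^t k(s)\,g(y(s))\,ds$ for $t\ge0$. Continuity of $k$, $g$ and $y$ gives $z\in\mathcal C^1([0,\infty))$ with $z'(t)=k(t)\,g(y(t))$, and the hypothesis is precisely $y(t)\le z(t)$. Since $k\ge0$ and $g\ge0$, the function $z$ is non-decreasing, so $z(t)\ge z(0)=M$ for every $t\ge0$; thus $z$ takes its values in $[M,\infty)$. One checks, using that $g$ is non-decreasing and that $G(u)=\int_0^u ds/g(s)$ is finite for the values involved, that $g>0$ on that range (a zero of $g$ would, by monotonicity, force $g\equiv0$ on a whole half-line and make the integral defining $G$ diverge before reaching $M$). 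Hence $1/g$ is continuous on $[M,\infty)$, $G$ is $\mathcal C^1$ there with $G'=1/g>0$, and in particular $G$ is non-decreasing.

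Next, from $y(t)\le z(t)$ and the monotonicity of $g$ we get $g(y(t))\le g(z(t))$, whence
\[
z'(t)\le k(t)\,g(z(t)),\qquad t\ge0 .
\]
Dividing by $g(z(t))>0$ and using the chain rule — legitimate because $z([0,t])\subset[M,\infty)$, where $G$ is $\mathcal C^1$ — gives $\frac{d}{dt}G(z(t))\le k(t)$. Integrating over $[0,t]$ and using $z(0)=M$ yields $G(z(t))\le G(M)+\int_0^t k(s)\,ds$. Finally, since $G$ is non-decreasing and $y(t)\le z(t)$,
\[
G(y(t))\le G(z(t))\le G(M)+\int_0^t k(s)\,ds ,
\]
which is the assertion.

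The differentiation and integration are routine; the only point that genuinely requires care is the degenerate case in which $g$ vanishes somewhere on the range of $z$, which would forbid the division by $g(z(t))$, and I expect this to be the main obstacle. As indicated above, the assumption that $G$ is well defined together with the monotonicity of $g$ already confines every zero of $g$ strictly below the range traversed by $z$; should one prefer to avoid this bookkeeping, the same conclusion follows by replacing $M$ with $M+\varepsilon$ for $\varepsilon>0$ (which preserves $y\le z_\varepsilon$ and keeps the computation in the region $\{g>0\}$) and then letting $\varepsilon\to0^+$, invoking the continuity of $G$.
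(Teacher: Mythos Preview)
The paper does not actually prove this lemma: it merely cites Corduneanu's book \cite{Cord71} for the proof and moves on. Your argument is the standard one and is correct --- define the right-hand side as $z$, use monotonicity of $g$ to turn the integral inequality into the differential inequality $z'\le k\,g(z)$, divide by $g(z)$, and integrate. There is nothing to compare against in the paper itself; your write-up could serve as the missing proof. The only mildly delicate point is ensuring $g(z(t))>0$ so that the division is legal; your observation that the well-definedness of $G$ together with the monotonicity of $g$ forces $g>0$ on $[M,\infty)$ (at least when $M>0$, which is the case in the paper's application with $g(u)=\sqrt{2u}$ and $M=E_{\psi,u}(0)$), and your fallback $\varepsilon$-perturbation argument, are both adequate ways to handle this.
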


Finally, before starting with the proof of Theorem \ref{thm local existence}, we prove that the space we will work with is actually a Banach space and then, consequently, we can apply Banach's fixed point theorem.

\begin{lemma}\label{lemma E(T) banach} Let us consider the space 
\begin{align*}
\mathrm{E}(T):=\big\{u\in\mathcal{C}([0,T],H^1)\,\cap\, \mathcal{C}^1([0,T],L^2): \|u\|_T^\psi<\infty\big\},
\end{align*}
 where 
\begin{align*}
\|u\|_T^\psi:=\sup_{t\in[0,T]}\left(\|u(t,\cdot)\|_{H^1_{\psi(t,\cdot)}}+\|u_t(t,\cdot)\|_{L^2_{\psi(t,\cdot)}}\right).
\end{align*} 

Then $\big(\mathrm{E}(T), \|\cdot\|_T^\psi\big)$ is a Banach space.
\end{lemma}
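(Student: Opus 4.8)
The plan is to verify that $\|\cdot\|_T^\psi$ is a norm on $\mathrm{E}(T)$ and then show completeness. The norm axioms are immediate: non-negativity and positive homogeneity follow from the same properties of the $L^2$-norm (since $\mathrm{e}^{\psi(t,\cdot)}>0$), the triangle inequality follows term-by-term from Minkowski's inequality applied to $\|\mathrm{e}^{\psi(t,\cdot)}\cdot\|_{L^2}$ and $\|\mathrm{e}^{\psi(t,\cdot)}\nabla\cdot\|_{L^2}$ followed by taking the supremum over $t\in[0,T]$, and $\|u\|_T^\psi=0$ forces $u(t,\cdot)=0$ in $L^2$ for every $t$, hence $u\equiv 0$. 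So the substantive content is completeness.

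For completeness, I would start with a Cauchy sequence $(u_k)_{k}$ in $\big(\mathrm{E}(T),\|\cdot\|_T^\psi\big)$. First observe that, by \eqref{L2-L2 weight est} (the embedding $L^2_{\sigma\psi(t,\cdot)}\hookrightarrow L^2$ with constant $1$), one has
\begin{align*}
\sup_{t\in[0,T]}\Big(\|u_k(t,\cdot)-u_j(t,\cdot)\|_{H^1}+\|\partial_t u_k(t,\cdot)-\partial_t u_j(t,\cdot)\|_{L^2}\Big)\leq \|u_k-u_j\|_T^\psi,
\end{align*}
so $(u_k)$ is Cauchy in the standard Banach space $\mathcal{C}([0,T],H^1)\cap\mathcal{C}^1([0,T],L^2)$; call its limit $u$. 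It remains to show $u\in\mathrm{E}(T)$ and $\|u_k-u\|_T^\psi\to 0$. The idea is to use that the weighted norms of the differences $u_k-u_j$ are uniformly small and pass to the limit in $j$ using Fatou's lemma. Fix $t\in[0,T]$; since $u_j(t,\cdot)\to u(t,\cdot)$ in $L^2$, a subsequence converges a.e., and $\partial_t u_j(t,\cdot)\to u_t(t,\cdot)$, $\nabla u_j(t,\cdot)\to\nabla u(t,\cdot)$ similarly along further subsequences. Then for $\varepsilon>0$ pick $K$ with $\|u_k-u_j\|_T^\psi<\varepsilon$ for $k,j\geq K$; by Fatou applied to $\mathrm{e}^{2\psi(t,x)}|u_k-u_j|^2$ (and the analogous quantities for the gradient and time derivative) along $j\to\infty$,
\begin{align*}
\|u_k(t,\cdot)-u(t,\cdot)\|_{H^1_{\psi(t,\cdot)}}+\|\partial_t u_k(t,\cdot)-u_t(t,\cdot)\|_{L^2_{\psi(t,\cdot)}}\leq \liminf_{j\to\infty}\|u_k-u_j\|_T^\psi\leq \varepsilon,
\end{align*}
uniformly in $t\in[0,T]$; taking the supremum over $t$ gives $\|u_k-u\|_T^\psi\leq\varepsilon$ for $k\geq K$. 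In particular $\|u\|_T^\psi\leq\|u_K\|_T^\psi+\varepsilon<\infty$, so $u\in\mathrm{E}(T)$, and $\|u_k-u\|_T^\psi\to 0$.

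The main subtlety — what I would treat carefully rather than the routine norm checks — is the interchange of the supremum over $t$ with the limit in $j$: one must make sure the Fatou estimate is genuinely uniform in $t$, which works here because the bound $\varepsilon$ coming from $\|u_k-u_j\|_T^\psi<\varepsilon$ does not depend on $t$, and because for each fixed $t$ one can extract an a.e.-convergent subsequence in $j$ (the subsequence may depend on $t$, but that is harmless since we only need the $\liminf$ bound). A minor point worth a remark is that one should also confirm $u$ inherits the continuity/$\mathcal{C}^1$ regularity in $t$ with values in the weighted spaces if that is desired, but as stated $\mathrm{E}(T)$ only requires membership in $\mathcal{C}([0,T],H^1)\cap\mathcal{C}^1([0,T],L^2)$ together with finiteness of $\|\cdot\|_T^\psi$, so the argument above suffices.
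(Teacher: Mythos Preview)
Your argument is correct and takes a genuinely different route from the paper's. The paper does not pass through the unweighted space $\mathcal{C}([0,T],H^1)\cap\mathcal{C}^1([0,T],L^2)$ at all; instead it observes that the sequences $\mathrm{e}^{\psi}u_k$, $\mathrm{e}^{\psi}\partial_t u_k$ and $\mathrm{e}^{\psi}\partial_{x_j}u_k$ are Cauchy directly in $\mathcal{C}([0,T],L^2)$, names their limits $\tilde{u},\tilde{u}_0,\tilde{u}_j$, sets $u=\mathrm{e}^{-\psi}\tilde{u}$, and then spends the bulk of the proof checking that $\mathrm{e}^{-\psi}\tilde{u}_j=\partial_{x_j}u$ (via a duality argument with test functions) and $\mathrm{e}^{-\psi}\tilde{u}_0=\partial_t u$ (via the fundamental theorem of calculus for vector-valued functions). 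Your approach is arguably cleaner: by first using the embedding \eqref{L2-L2 weight est} you get the limit $u$ together with its correct derivatives for free from the completeness of the standard unweighted evolution space, and then a single Fatou step upgrades to weighted convergence. The price you pay is the need to extract, for each fixed $t$, a.e.-convergent subsequences and to argue (as you did) that the $t$-dependence of these subsequences is harmless; the paper's route avoids any subsequence or Fatou argument but must instead re-identify the derivatives of the limit by hand.
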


\begin{proof}
Let $\{u_k\}_{k\geq 1}\subset \mathrm{E}(T)$ be a Cauchy sequence with respect to the norm $\|\cdot\|_T^\psi$.
 According to the definition of the norm in $\mathrm{E}(T)$, we have that $\{\mathrm{e}^\psi u_k\}_{k\geq 1}$, $\{\mathrm{e}^\psi \partial_t u_k\}_{k\geq 1}$ and $\{\mathrm{e}^\psi \partial_{xj}u_k\}_{k\geq 1}$ for any $j=1,\dots ,n$ are Cauchy sequences in $\mathcal{C}([0,T],L^2)$ which is a Banach space. 
 
 So we can find $\tilde{u},\tilde{u}_0,\tilde{u}_1,\dots,\tilde{u}_n\in \mathcal{C}([0,T],L^2)$ such that we have the following convergences in $\mathcal{C}([0,T],L^2)$ as $ k\to\infty$:
 \begin{align}
 \mathrm{e}^\psi u_k &\to  \tilde{u}, \label{convergence function}\\ 
 \mathrm{e}^\psi \partial_t u_k &\to  \tilde{u}_0 ,\label{convergence time deriv}\\
 \mathrm{e}^\psi \partial_{x_j} u_k &\to \tilde{u}_j  \quad \mbox{for any }\,\, j=1,\dots ,n. \label{convergence grad}
 \end{align}  Let us define $u=\mathrm{e}^{-\psi}\tilde{u}$ and $\bar{u}_j=\mathrm{e}^{-\psi}\tilde{u}_j$ for any $j=0,1,\dots ,n$.  It is clear that $u,\bar{u}_0,\bar{u}_1,\dots,\bar{u}_n\in \mathcal{C}\big([0,T],L^2_{\psi(t,\cdot)}\big)$.
 
  Therefore, if we prove that $\partial_t u=\bar{u}_0$ and $\partial_{x_j}u=\bar{u}_j$ for any $j=1,\dots,n$, by \eqref{convergence function}, \eqref{convergence time deriv} and \eqref{convergence grad} it follows immediately that $u$ belongs to the space $ \mathcal{C}\big([0,T],H^1_{\psi(t,\cdot)}\big)\cap\mathcal{C}^1\big([0,T],L^2_{\psi(t,\cdot)}\big)$ and that $u_k\to u$ in $\mathrm{E}(T)$ as $k\to\infty$.
  
For all test functions $\phi\in\mathcal{C}^1_0$
and indexes $j=1,\dots,n$, using Cauchy-Schwarz inequality, \eqref{convergence function} and \eqref{convergence grad} we obtain
\begin{align*}
\int_{\mathbb{R}^n}\partial_{x_j}\phi(x)& u(t,x)dx=\int_{\mathbb{R}^n}\partial_{x_j}\phi(x)\mathrm{e}^{-\psi(t,x)}\tilde{u}(t,x)dx\\&=\lim_{k\to\infty}\int_{\mathbb{R}^n}\partial_{x_j}\phi(x)u_k(t,x)dx=-\lim_{k\to\infty}\int_{\mathbb{R}^n}\phi(x)\partial_{x_j}u_k(t,x)dx\\&=-\int_{\mathbb{R}^n}\phi(x)\mathrm{e}^{-\psi(t,x)}\tilde{u}_j(t,x)dx=-\int_{\mathbb{R}^n}\phi(x)\bar{u}_j(t,x)dx,
\end{align*} that is, $\partial_{x_j}u=\bar{u}_j$ in the Sobolev sense.

Due to the fundamental theorem of calculus for vector-valued functions, we have 
\begin{align*}
u_k(t,x)=u_k(0,x)+\int_0^t \partial_t u_k(s,x)ds \quad \mbox{in} \quad L^2
\end{align*}
for any $k\geq 1$.

  Taking the limit as $k\to\infty$ we get
\begin{align*}
u(t,x)&=\mathrm{e}^{-\psi(t,x)}\tilde{u}(t,x)=\mathrm{e}^{-\psi(0,x)}\tilde{u}(0,x)+\int_0^t \mathrm{e}^{-\psi(s,x)}\tilde{u}_0(s,x)ds\\&=u(0,x)+\int_0^t \bar{u}_0(s,x)ds \quad \mbox{in} \quad L^2.
\end{align*}
 Thus using again the fundamental theorem of calculus we have  $\bar{u}_0=\partial_t u$. 
\end{proof}

\begin{proof}[Proof of Theorem \ref{thm local existence}]
Let $T,K>0$. We define 
\begin{align*}
B_{T,K}^\psi=\{v\in \mathcal{C}([0,T],H^1)\cap\mathcal{C}^1([0,T],L^2):\|v\|_T^\psi\leq K\},
\end{align*} where $\|\cdot\|_T^\psi$ denotes the same norm introduced in the statement of Lemma \ref{lemma E(T) banach}.

Let us consider the map 
\begin{align*}
\Phi: B_{T,K}^\psi&\longrightarrow \mathcal{C}([0,T],H^1)\cap\mathcal{C}^1([0,T],L^2), \\  v&\longmapsto u=\Phi(v),
\end{align*} where $u$ is the unique solution to the Cauchy problem
\begin{align*}
\begin{cases}
u_{tt} -\Delta u + \frac{\mu_1}{1+t} u_t + \frac{\mu_2^2}{(1+t)^2} u
=|v|^p,\quad (t,x)\in (0,T)\times\mathbb{R}^n, \\  u(0,x)=u_0(x), \quad x\in \mathbb{R}^n,\\  u_t(0,x)=u_1(x),\quad x\in \mathbb{R}^n.
\end{cases}
\end{align*} Our goal is to prove that, for a suitable choice of $T$ and $K$, $\Phi$ is a contraction map from $B_{T,K}^\psi$ into itself. From the relation \eqref{fundamental equality 1} it follows that
\begin{align*}
\mathrm{e}^{2\psi}u_t|v|^p\geq \frac{\partial}{\partial t}\bigg(\frac{\mathrm{e}^{2\psi}}{2}(u_t^2+|\nabla u|^2+m^2(t)u^2)\bigg)-\diver(\mathrm{e}^{2\psi}u_t\nabla u).
\end{align*}

 Therefore, denoting by
\begin{align*}
E_{\psi,u}(t)=\frac{1}{2}\int_{\mathbb{R}^n}\mathrm{e}^{2\psi(t,x)}\left(|u_t(t,x)|^2+|\nabla u(t,x)|^2+m^2(t)|u(t,x)|^2\right)dx
\end{align*} the \emph{weighted energy} of the function $u$, integrating over $[0,t]\times\mathbb{R}^n$ the previous inequality, using the divergence theorem in a weak sense for $L^1$ functions, one has
\begin{align*}
E_{\psi,u}(t)\leq E_{\psi,u}(0)+\int_0^t\int_{\mathbb{R}^n}\mathrm{e}^{2\psi(s,x)}u_t(s,x)|v(s,x)|^p dx ds.
\end{align*} 

Applying Cauchy-Schwarz inequality we get
\begin{align*}
E_{\psi,u}(t)
&\leq E_{\psi,u}(0)+\sqrt{2}\int_0^t\left(\int_{\mathbb{R}^n}\mathrm{e}^{2\psi(s,x)}|v(s,x)|^{2p} dx\right)^{\frac{1}{2}} E_{\psi,u}(s)^{\frac{1}{2}} ds.
\end{align*}
Because of Bihari's inequality, with $g(u)=(2u)^{\frac{1}{2}}$, we find
\begin{align}\label{Bihari est}
E_{\psi,u}(t)^{\frac{1}{2}}\leq E_{\psi,u}(0)^{\frac{1}{2}}+\frac{1}{\sqrt{2}}\int_0^t\left(\int_{\mathbb{R}^n}\mathrm{e}^{2\psi(s,x)}|v(s,x)|^{2p} dx\right)^{\frac{1}{2}} ds.
\end{align} 
Since $v\in B_{T,K}^\psi$, for any $t\in [0,T]$ it results $v(t,\cdot)\in H^1_{\psi(t,\cdot)}$. Hence, from Lemma \ref{2 lemma GN with weight} we get 
\begin{align*}
\|\mathrm{e}^{\frac{1}{p}\psi(s,\cdot)}v(s,\cdot)\|^{2p}_{L^{2p}}
& \lesssim (1+s)^{2p(1-\theta(2p))}\|\nabla v(s,\cdot)\|_{L^2}^{2(p-1)}\|\mathrm{e}^{\psi(s,\cdot)}\nabla v(s,\cdot)\|_{L^2}^{2}\\&\lesssim 
(1+s)^{2p(1-\theta(2p))}K^{2p}.
\end{align*} 

Consequently, from \eqref{Bihari est} we obtain
\begin{align*}
E_{\psi,u}(t)^{\frac{1}{2}}\leq E_{\psi,u}(0)^{\frac{1}{2}}+T(1+T)^{p(1-\theta(2p))}K^p.
\end{align*}
Then on one hand we have
\begin{align*}
\|\mathrm{e}^{\psi(t,\cdot)}u_t(t,\cdot)\|_{L^2}+\|\mathrm{e}^{\psi(t,\cdot)}\nabla u(t,\cdot)\|_{L^2}\lesssim E_{\psi,u}(0)^{\frac{1}{2}}+T(1+T)^{p(1-\theta(2p))}K^p,
\end{align*} on the other hand 
\begin{align*}
\|\mathrm{e}^{\psi(t,\cdot)}u(t,\cdot)\|_{L^2}&\lesssim m^{-1}(t)E_{\psi,u}(t)^{\frac{1}{2}}\\&\lesssim (1+T)E_{\psi,u}(0)^{\frac{1}{2}}+T(1+T)^{p(1-\theta(2p))+1}K^p.
\end{align*}

Summarizing, for all $t\in[0,T]$
\begin{align*}
\|u(t,\cdot)\|_{H^1_{\psi(t,\cdot)}}\!+\|u_t(t,\cdot)\|_{L^2_{\psi(t,\cdot)}}\!\lesssim (1+T)E_{\psi,u}(0)^{\frac{1}{2}}\!+T(1+T)^{p(1-\theta(2p))+1}K^p.
\end{align*}
Since the initial energy depends only on the data, we can choose $K$ sufficiently large such that the first term in the above inequality is less than $\tfrac{K}{2}$, while fixing $T>0$ enough small also the second term can be estimated with $\tfrac{K}{2}$. 

Being the above estimate uniform in $t$, it follows that $\|v\|_{T}^\psi\leq K$, that is $\Phi$ maps $B_{T,K}^\psi$ to itself. 

Now we have to prove that $\Phi$ is a contraction map, provided that $T$ is sufficiently small. Let us consider $v,\bar{v}\in B_{T,K}^\psi$. Denoting $u=\Phi(u), \bar{u}=\Phi(\bar{v})$, it follows immediately that $w=u-\bar{u}$ satisfies the Cauchy problem
\begin{equation*}\begin{cases}
 w_{tt} -\Delta w + \frac{\mu_1}{1+t} w_t + \frac{\mu_2^2}{(1+t)^2} w
=|v|^p-|\bar{v}|^p,\quad (t,x)\in (0,T)\times\mathbb{R}^n, \\  u(0,x)=u_t(0,x)=0, \quad x\in \mathbb{R}^n.
\end{cases}
\end{equation*} 

Then using once again \eqref{fundamental equality 1} and the divergence theorem, we get after integrating over $[0,t]\times \mathbb{R}^n$ the inequality
\begin{align*}
E_{\psi,w}(t)\leq \int_0^t\int_{\mathbb{R}^n}\mathrm{e}^{2\psi(s,x)}\Big(|v(s,x)|^p-|\bar{v}(s,x)|^p\Big)w_t(s,x)dx ds.
\end{align*} Using the inequality $||v|^p-|\bar{v}|^p|\leq p|v-\bar{v}|(|v|+|\bar{v}|)^{p-1}$ and Cauchy-Schwarz inequality we find
\begin{align*}
E_{\psi,w}(t)&\lesssim  \int_0^t\!\! \int_{\mathbb{R}^n}\!\!\mathrm{e}^{2\psi(s,x)}|v(s,x)-\bar{v}(s,x)|\Big(|v(s,x)|+|\bar{v}(s,x)|\Big)^{p-1}\!\!\!w_t(s,x)dx ds \\
&\leq \int_0^tE_{\psi,w}(s)^{\frac{1}{2}}\bigg(\int_{\mathbb{R}^n}\!\mathrm{e}^{2\psi(s,x)}|v(s,x)-\bar{v}(s,x)|^2\\ &\qquad \qquad\qquad\qquad\times\Big(|v(s,x)|+|\bar{v}(s,x)|\Big)^{2(p-1)}dx\bigg)^{\frac{1}{2}} ds.
\end{align*}
Applying once again Lemma \ref{lemma of gronwall type}, we get the inequality
\begin{align}\label{stima intermedia lemma loc esistenza}
E_{\psi,w}(t)^\frac{1}{2}\lesssim  \! \int_0^t \!\left(\int_{\mathbb{R}^n}\!\!\!\mathrm{e}^{2\psi(s,x)}|v(s,x)\!-\!\bar{v}(s,x)|^2(|v(s,x)|+|\bar{v}(s,x)|)^{2(p-1)}\!dx\!\right)^{\frac{1}{2}} \!\!\!ds.
\end{align}
By H\"{o}lder's inequality we have
\begin{align*}
&\|\mathrm{e}^{\psi(s,\cdot)}|v(s,\cdot)-\bar{v}(s,\cdot)|(|v(s,\cdot)|+|\bar{v}(s,\cdot)|)^{p-1}\|_{L^2}\\&\quad \leq \|\mathrm{e}^{(2-p)\psi(s,\cdot)}|v(s,\cdot)\!-\!\bar{v}(s,\cdot)|\|_{L^{2p}}\|\mathrm{e}^{(p-1)\psi(s,\cdot)}(|v(s,\cdot)|+|\bar{v}(s,\cdot)|)^{p-1}\|_{L^{\frac{2p}{p-1}}}.
\end{align*}

Let us estimate the two norms that appear at the right-hand side in the last inequality. Using Lemma \ref{2 lemma GN with weight} and $\psi\geq 0$, we obtain
\begin{align*}
\|\mathrm{e}^{(2-p)\psi(s,\cdot)}|v(s,\cdot)\!-\!\bar{v}(s,\cdot)|\|_{L^{2p}}\lesssim (1+s)^{(1-\theta(2p))}\|\mathrm{e}^{\psi(s,\cdot)}\nabla(v(s,\cdot)\!-\!\bar{v}(s,\cdot))\|_{L^{2}}
\end{align*}
and
\begin{align*}
\|&\mathrm{e}^{(p-1)\psi(s,\cdot)}(|v(s,\cdot)|+|\bar{v}(s,\cdot)|)^{p-1}\|_{L^{\frac{2p}{p-1}}}
\\&\qquad\lesssim \left(\|\mathrm{e}^{\psi(s,\cdot)}v(s,\cdot)\|_{L^{2p}}+\|\mathrm{e}^{\psi(s,\cdot)}\bar{v}(s,\cdot)\|_{L^{2p}}\right)^{p-1}\\ &\qquad\lesssim (1+s)^{(1-\theta(2p))(p-1)}\left(\|\mathrm{e}^{\psi(s,\cdot)}\nabla v(s,\cdot)\|_{L^{2}}+\|\mathrm{e}^{\psi(s,\cdot)}\nabla \bar{v}(s,\cdot)\|_{L^{2}}\right)^{p-1}.
\end{align*}
By \eqref{stima intermedia lemma loc esistenza} one gets
\begin{align*}
\|\mathrm{e}^{\psi(t,\cdot)}&w_t(t,\cdot)\|_{L^2}+\|\mathrm{e}^{\psi(t,\cdot)}\nabla w(t,\cdot)\|_{L^2} \\ &\quad \lesssim  \int_0^t (1+s)^{p(1-\theta(2p))}\|\mathrm{e}^{\psi(s,\cdot)}\nabla(v(s,\cdot)-\bar{v}(s,\cdot))\|_{L^{2}}\\& \quad \qquad \times\left(\|\mathrm{e}^{\psi(s,\cdot)}\nabla v(s,\cdot)\|_{L^{2}}+\|\mathrm{e}^{\psi(s,\cdot)}\nabla \bar{v}(s,\cdot)\|_{L^{2}}\right)^{p-1}ds\\ &\quad \lesssim\int_0^t (1+s)^{p(1-\theta(2p))}ds \|v-\bar{v}\|_T^\psi(\|v\|_T^\psi+\|\bar{v}\|_T^\psi)^{p-1}\\&\quad \lesssim T(1+T)^{p(1-\theta(2p))}K^{p-1} \|v-\bar{v}\|_T^\psi.
\end{align*} 
On the other hand 
\begin{align*}
\|\mathrm{e}^{\psi(t,\cdot)}w(t,\cdot)\|_{L^2}\lesssim m^{-1}(t)E_{\psi,w}(t)^{\frac{1}{2}}\lesssim T(1+T)^{p(1-\theta(2p))+1}K^{p-1} \|v-\bar{v}\|_T^\psi,
\end{align*} where the unexpressed multiplicative constants in this and in the previous chain of inequalities do not depend on $T$ and $K$. Summarizing 
\begin{align*}
\|w\|_T^\psi=\|\Phi(v)-\Phi(\bar{v})\|_T^\psi \lesssim T(1+T)^{p(1-\theta(2p))+1}K^{p-1} \|v-\bar{v}\|_T^\psi,
\end{align*} and then choosing $T>0$ small enough we have that $\Phi$ is a contraction.

Since in Lemma \ref{lemma E(T) banach} we proved that the space $\big(\mathrm{E}(T),\|\cdot\|_{T}^\psi\big)$ is a Banach space, then by Banach's fixed point theorem it is clear that our starting problem has a unique solution in $\mathcal{C}([0,T_m),H^1)\cap\mathcal{C}^1([0,T_m),L^2)$ with finite energy $E_{\psi,u}(t)$ for any $t\in [0,T_m)$.
Moreover $T_m<\infty$ implies the blow up of the energy for $T\to T_m^-$. Indeed if it was not so we would have a finite energy in a left neighborhood of $T_m$, and then repeating the same arguments seen for the particular case in which the initial conditions are taken for $t=0$, we could extend our solution.
\end{proof}

\begin{rmk} Differently from Theorem \ref{thm glob exi exp data}, Theorem \ref{thm local existence} is valid for any value of $\delta$ provided that $\mu_1>0$. 

This means that the Cauchy problem \eqref{CP semilinear 1} is locally in time well posed in the weighted evolution space $\mathcal{C}\big([0,T],H^1_{\psi(t,\cdot)}\big)\cap\mathcal{C}^1\big([0,T],L^2_{\psi(t,\cdot)}\big)$ for any $p>1$ such that $p\leq \frac{n}{n-2}$ when $n\geq 3$, independently from the value of $\delta$ and without any lower bound on $p$, due to the approach we are considering.
\end{rmk}

\section{Estimates for the linear problem} \label{Section Linear estimates}

In order to prove Theorem \ref{thm glob exi exp data}, we need to recall some known decay estimates for the solution of the linear parameter dependent Cauchy problem \eqref{linear parameter dependent CP}.

In the next propositions we can relax the assumptions on initial data, without consider the weighted energy spaces. Indeed, we may require just data in the classical energy spaces with additional $L^1$ regularity, namely
\begin{align*}
(u_0,u_1)\in (H^1\cap L^1)\times (L^2\cap L^1).
\end{align*} 

We will use the notation 
\begin{align*}
\mathcal{D}^\kappa :=(H^\kappa\cap L^1)\times (L^2\cap L^1) \qquad \mbox{for any} \,\, \kappa\in [0,1].
\end{align*} 
We put also $\mathcal{D}:= \mathcal{D}^1$.

When the data are taken at the initial time $s=0$, we have the following result.

\begin{prop}\label{Prop Lin Estim} Let $\mu_1>0$ and $\mu_2$ be nonnegative constants such that $\delta>0$. Let us consider $(u_0,u_1)\in \mathcal{D}$. Then for all $\kappa\in[0,1]$ the energy solution $u$ to \eqref{linear parameter dependent CP} with $s=0$ satisfies the decay estimates
\begin{align}\label{Lin Estim} \|u(t,\cdot)\|_{\dot{H}^\kappa}&\lesssim  \|(u_0,u_1)\|_{\mathcal{D}^\kappa} \begin{cases}(1+t)^{-\kappa-\frac{n+\mu_1}{2}+\frac{1+\sqrt{\delta}}{2}} & \mbox{if}\quad \kappa <\frac{1+\sqrt{\delta}-n}{2}, \\
(1+t)^{-\frac{\mu_1}{2}}\ell(t)  & \mbox{if}\quad \kappa =\frac{1+\sqrt{\delta}-n}{2}, \\
(1+t)^{-\frac{\mu_1}{2}} & \mbox{if}\quad  \kappa >\frac{1+\sqrt{\delta}-n}{2}, \end{cases}
\end{align} where $\ell(t):=1+(\log(1+t))^{\frac{1}{2}}$.
Moreover, $\| u_t(t,\cdot)\|_{L^2}$ satisfies the same decay estimates as $\|\nabla u(t,\cdot)\|_{L^2}$ which are obtained from \eqref{Lin Estim} after taking $\kappa=1$.
\end{prop}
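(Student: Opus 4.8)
The plan is to reduce the scale-invariant equation \eqref{linear parameter dependent CP} with $s=0$ to a better-understood model by the transformation $v(t,x)=(1+t)^{\gamma}u(t,x)$ already advertised in the introduction. Since
\begin{align*}
u_{tt}-\Delta u+\frac{\mu_1}{1+t}u_t+\frac{\mu_2^2}{(1+t)^2}u=0,
\end{align*}
the choice $\gamma=\frac{\mu_1}{2}$ eliminates the first-order term and turns the equation for $v$ into a Klein--Gordon-type equation with a scale-invariant potential,
\begin{align*}
v_{tt}-\Delta v+\frac{c}{(1+t)^{2}}v=0,\qquad c=\mu_2^2-\frac{\mu_1}{2}\Big(\frac{\mu_1}{2}-1\Big)=-\frac{\delta-1}{4}.
\end{align*}
First I would record this identity carefully, together with the correspondence of the Cauchy data, $v(0,\cdot)=u_0$, $v_t(0,\cdot)=u_1+\frac{\mu_1}{2}u_0$, so that $\|(v(0,\cdot),v_t(0,\cdot))\|_{\mathcal D^\kappa}\lesssim\|(u_0,u_1)\|_{\mathcal D^\kappa}$; the reverse bound $u=(1+t)^{-\mu_1/2}v$ then shows that every estimate for $v$ produces the claimed $(1+t)^{-\mu_1/2}$ prefactor for $u$.

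Next, passing to the Fourier side in $x$, each mode $\hat v(t,\xi)$ solves an ODE of Bessel type in $t$; substituting $\tau=(1+t)|\xi|$ (in the region $|\xi|\lesssim (1+t)^{-1}$ this is the relevant regime) one recognises the modified Bessel equation of order $\nu=\frac{\sqrt\delta}{2}$ (here the hypothesis $\delta>0$ is exactly what makes $\nu$ real and the two Bessel solutions a genuine fundamental system). The standard asymptotics of $I_\nu,K_\nu$ near $0$ and of $J_\nu,Y_\nu$ for large argument give, for the low-frequency part, a pointwise bound of the form $|\hat v(t,\xi)|\lesssim (1+t)^{\frac{1+\sqrt\delta}{2}}\big(|\hat u_0(\xi)|+|\widehat{u_1+\tfrac{\mu_1}{2}u_0}(\xi)|\big)$ (with a logarithmic correction in the borderline case), while for $|\xi|\gtrsim(1+t)^{-1}$ one uses the oscillatory regime and an energy argument to get decay $|\hat v(t,\xi)|\lesssim$ (data) with no growth. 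I would then split $\|\,|\xi|^\kappa\hat v(t,\cdot)\|_{L^2}^2=\int_{|\xi|\le (1+t)^{-1}}+\int_{|\xi|>(1+t)^{-1}}$: on the low zone use the $L^1\to L^\infty$ bound on $\hat u_0,\hat u_1$ coming from the $L^1$ data, the growth factor $(1+t)^{(1+\sqrt\delta)/2}$, and the elementary integral $\int_{|\xi|\le(1+t)^{-1}}|\xi|^{2\kappa}\,d\xi\approx (1+t)^{-n-2\kappa}$, which yields $(1+t)^{-\kappa-\frac{n}{2}+\frac{1+\sqrt\delta}{2}}$; on the high zone, interpolate the $L^2$ conservation-type estimate with the Bessel decay. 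Combining the two zones and comparing exponents produces the three cases according to the sign of $\kappa-\frac{1+\sqrt\delta-n}{2}$, with $\ell(t)$ appearing precisely at equality from the logarithmic divergence of the zonal integral. The estimate for $u_t$ follows by differentiating $u=(1+t)^{-\mu_1/2}v$ in $t$, using $u_t=(1+t)^{-\mu_1/2}\big(v_t-\tfrac{\mu_1}{2}(1+t)^{-1}v\big)$ and the fact that the extra term is lower order, so $\|u_t\|_{L^2}$ inherits the $\kappa=1$ estimate for $\|\nabla u\|_{L^2}$.

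The main obstacle is the careful frequency-by-frequency analysis of the Bessel-type ODE: one must track the constants and the transition between the ``parabolic/overdamped'' low-frequency regime and the oscillatory high-frequency regime uniformly in $t$, and handle the borderline order $\nu$ (where $K_\nu$ or $Y_\nu$ picks up a logarithm) to produce exactly the factor $\ell(t)$. Since the statement says ``we need to recall some known decay estimates,'' in practice I expect this Proposition to be quoted from the literature on scale-invariant wave models (e.g.\ the references \cite{NunPalRei16} and the works on effective scale-invariant damping cited in the introduction) rather than reproved from scratch; the role of the transformation above is to make transparent why the exponents take the stated form and why the threshold is $\kappa=\frac{1+\sqrt\delta-n}{2}$.
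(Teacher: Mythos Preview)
Your proposal is correct and matches the paper's stance exactly: the paper does not reprove Proposition~\ref{Prop Lin Estim} but simply states that it is a special case of Theorems~4.6 and~4.7 in \cite{PalRei17}, noting only that the underlying method uses ``explicit representation formulas'' obtained via the partial Fourier transform and the scale-invariance (hyperbolic scaling) of the equation---precisely the Bessel-type ODE analysis and low/high-frequency zone splitting you outline. Your transformation $v=(1+t)^{\mu_1/2}u$, the computation $c=-\tfrac{\delta-1}{4}$, the Bessel order $\nu=\tfrac{\sqrt{\delta}}{2}$, and your closing remark that the result is quoted rather than reproved are all on target.
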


Let us derive a result for the Cauchy problem \eqref{linear parameter dependent CP} in the case in which the first datum vanishes. According to Duhamel's principle, this type of result is necessary to estimate the integral term
\begin{align*}
\int_0^t E_1(t,s,x)\ast_{(x)} |u(s,x)|^p ds
\end{align*} that appears in the definition of the operator $N$.

\begin{prop}\label{Prop Lin Estim v_0=0 for t>s}
Let $\mu_1>0$ and $\mu_2$ be nonnegative constants such that $\delta>0$. Let us assume $u_0=0$ and $ u_1\in L^1\cap L^2$. Then the energy solution $u$ to \eqref{linear parameter dependent CP} satisfies for $t \geq s$ and $\kappa\in[0,1]$ the following estimates
\begin{equation}\begin{split}\label{Lin Estim v_0=0 for t>s}
\|u(t,\cdot)\|_{\dot{H}^\kappa}&\lesssim \Big(\|u_1\|_{L^1}+(1+s)^{\frac{n}{2}}\|u_1\|_{L^2}\Big)(1+s)^{\frac{1+\mu_1}{2}-\frac{\sqrt{\delta}}{2}}\\&\quad \times \begin{cases}(1+t)^{-\kappa-\frac{n+\mu_1}{2}+\frac{1+\sqrt{\delta}}{2}}  & \mbox{if}\quad \kappa <\frac{1+\sqrt{\delta}-n}{2}  ,\\ (1+t)^{-\frac{\mu_1}{2}}\widetilde{\ell}(t,s)
 & \mbox{if}\quad \kappa =\frac{1+\sqrt{\delta}-n}{2},\\ (1+t)^{-\frac{\mu_1}{2}}(1+s)^{-\kappa-\frac{n}{2}+\frac{1+\sqrt{\delta}}{2}} & \mbox{if}\quad \kappa >\frac{1+\sqrt{\delta}-n}{2},\end{cases}
\end{split}
\end{equation} where $\widetilde{\ell}(t,s):=1+\big(\log\big(\frac{1+t}{1+s}\big)\big)^{\frac{1}{2}}$.
 Moreover, $\| u_t(t,\cdot)\|_{L^2}$ satisfies the same decay estimates as $\|\nabla u(t,\cdot)\|_{L^2}$ which are obtained from \eqref{Lin Estim v_0=0 for t>s} after taking $\kappa=1$.
\end{prop}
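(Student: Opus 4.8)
The plan is to reduce the general initial time $s\ge 0$ to the case $s=0$ already handled in Proposition \ref{Prop Lin Estim}, by exploiting the scale invariance of the linear equation in \eqref{linear parameter dependent CP}. If $u=u(t,x)$ is the energy solution of \eqref{linear parameter dependent CP} with data $(0,u_1)$ prescribed at time $s$, I would introduce the rescaled function
\begin{align*}
v(\tau,y):=u\big((1+s)(1+\tau)-1,\,(1+s)y\big),
\end{align*}
so that $1+\tau=\frac{1+t}{1+s}$ and $y=\frac{x}{1+s}$; in particular $t\ge s$ corresponds exactly to $\tau\ge 0$. Using $\partial_\tau=(1+s)\partial_t$ and $\nabla_y=(1+s)\nabla_x$, together with $1+t=(1+s)(1+\tau)$, one checks that the time-dependent coefficients $\frac{\mu_1}{1+t}$ and $\frac{\mu_2^2}{(1+t)^2}$ rescale by precisely the factor $(1+s)^2$ needed to leave the equation unchanged; hence $v$ is the energy solution of the same equation with initial time $\tau=0$ and data $v(0,\cdot)=0$, $\ v_\tau(0,y)=(1+s)\,u_1((1+s)y)$.

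Next I would apply Proposition \ref{Prop Lin Estim} to $v$ with $\kappa\in[0,1]$. Since the first datum vanishes, $\|(0,v_\tau(0,\cdot))\|_{\mathcal{D}^\kappa}=\|v_\tau(0,\cdot)\|_{L^1}+\|v_\tau(0,\cdot)\|_{L^2}$, and the substitution $y\mapsto(1+s)y$ yields
\begin{align*}
\|v_\tau(0,\cdot)\|_{L^1}=(1+s)^{1-n}\|u_1\|_{L^1},\qquad \|v_\tau(0,\cdot)\|_{L^2}=(1+s)^{1-\frac{n}{2}}\|u_1\|_{L^2},
\end{align*}
while on the output side the same dilation read on the Fourier side gives $\|u(t,\cdot)\|_{\dot{H}^\kappa}=(1+s)^{\frac{n}{2}-\kappa}\|v(\tau,\cdot)\|_{\dot{H}^\kappa}$. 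Moreover the time-decay factors $(1+\tau)^{\cdots}$ produced by Proposition \ref{Prop Lin Estim} become $\big(\frac{1+t}{1+s}\big)^{\cdots}$, and in the borderline exponent the logarithm $\ell(\tau)=1+(\log(1+\tau))^{1/2}$ becomes exactly $\widetilde{\ell}(t,s)=1+\big(\log\frac{1+t}{1+s}\big)^{1/2}$.

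It then remains to collect the powers of $(1+s)$ in each of the three regimes of $\kappa$. In all cases the $(1+s)$-exponents arising from the data rescaling, from the $\dot{H}^\kappa$-rescaling, and from inverting the factor $(1+\tau)^{\cdots}$ add up so that the $L^1$-part of the data carries the weight $(1+s)^{\frac{1+\mu_1}{2}-\frac{\sqrt{\delta}}{2}}$ and the $L^2$-part the weight $(1+s)^{\frac{n}{2}+\frac{1+\mu_1}{2}-\frac{\sqrt{\delta}}{2}}$, which is precisely the prefactor $\big(\|u_1\|_{L^1}+(1+s)^{\frac{n}{2}}\|u_1\|_{L^2}\big)(1+s)^{\frac{1+\mu_1}{2}-\frac{\sqrt{\delta}}{2}}$ in \eqref{Lin Estim v_0=0 for t>s}; the residual $(1+t)$- and, in the case $\kappa>\frac{1+\sqrt{\delta}-n}{2}$, $(1+s)$-powers in the bracketed factor come out as stated. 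Finally, the estimate for $u_t$ follows from the identity $v_\tau=(1+s)u_t$ together with the fact, recalled in Proposition \ref{Prop Lin Estim}, that $\|v_\tau(\tau,\cdot)\|_{L^2}$ obeys the same bound as $\|\nabla v(\tau,\cdot)\|_{L^2}$.

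The only genuinely delicate point is the bookkeeping of these scaling exponents — in particular verifying that the three independent sources of powers of $(1+s)$ combine into the clean exponents appearing in the statement, and that the borderline case $\kappa=\frac{1+\sqrt{\delta}-n}{2}$ reproduces $\widetilde{\ell}(t,s)$ rather than $\ell(t)$ or $\ell(s)$. Everything else is a mechanical consequence of the scale invariance of \eqref{linear parameter dependent CP} and of Proposition \ref{Prop Lin Estim}.
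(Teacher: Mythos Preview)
Your argument is correct: the hyperbolic scaling $1+t=(1+s)(1+\tau)$, $x=(1+s)y$ leaves the linear equation in \eqref{linear parameter dependent CP} invariant, and your bookkeeping of the powers of $(1+s)$ is accurate in all three regimes (I checked each one), including the identification $\ell(\tau)=\widetilde{\ell}(t,s)$ at the borderline exponent and the treatment of $u_t$ via $v_\tau=(1+s)u_t$.

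The paper itself does not supply a proof of this proposition; it only remarks that Propositions \ref{Prop Lin Estim} and \ref{Prop Lin Estim v_0=0 for t>s} are special cases of results in \cite{PalRei17}, obtained there from explicit representation formulas for $\widehat{u}$ and $\widehat{u_t}$ (derived, in turn, by exploiting the scale invariance together with the partial Fourier transform). Your route is therefore slightly different and, in this context, more economical: rather than repeating the representation-formula analysis for general $s$, you use the scaling once to reduce to $s=0$ and then invoke Proposition \ref{Prop Lin Estim} as a black box. Both approaches ultimately rest on the same scale invariance of \eqref{linear parameter dependent CP}; yours just packages the $s$-dependence externally instead of tracking it through the Fourier-side computations.
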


The proofs of Propositions \ref{Prop Lin Estim} and \ref{Prop Lin Estim v_0=0 for t>s} are based on explicit representation formulas for the solution to \eqref{linear parameter dependent CP} and its time derivative.

In order to derive these representation formulas the scale-invariance of the linear equation in \eqref{linear parameter dependent CP} with respect to the \emph{hyperbolic scaling}
, together with the partial Fourier transform, is used. For a more precise presentation of the previous cited representation formulas one can see \cite{WiThe,Wi07}.

Concerning the proofs of Propositions \ref{Prop Lin Estim} and \ref{Prop Lin Estim v_0=0 for t>s}, we remark that they are a special case of Theorems 4.6 and 4.7 in \cite{PalRei17}.

\section{Global existence of small data solutions: proof of Theorem \ref{thm glob exi exp data}}\label{SDGE Section}

In order to prove Theorem \ref{thm glob exi exp data} we have first to prove the next preliminary lemma, which allows to estimate the \emph{weighted energy} of a local (in time) solution $u$ to \eqref{CP semilinear 1}
\begin{align*}
E_{\psi,u}(t)&=\frac{1}{2}\int_{\mathbb{R}^n}\mathrm{e}^{2\psi(t,x)}\left(|u_t(t,x)|^2+|\nabla u(t,x)|^2+m^2(t)|u(t,x)|^2\right)dx.
\end{align*}

\begin{lemma}\label{lemma before glob exist with exp weight} Let $n\geq 1$ and $\mu_1>0$, $\mu_2^2$ be the nonnegative constants. Let $p>1$ such that $p\leq \frac{n}{n-2}$ if $n\geq 3$. Let $(u_0,u_1)\in \mathcal{A}$. If $u=u(t,x)$ is a solution of 
\begin{align*}
\begin{cases}u_{tt} -\Delta u + \frac{\mu_1}{1+t} u_t + \frac{\mu_2^2}{(1+t)^2} u
=|u|^p,\quad (t,x)\in (0,T)\times\mathbb{R}^n, \\  u(0,x)=u_0(x), \quad x\in \mathbb{R}^n,\\  u_t(0,x)=u_1(x),\quad x\in \mathbb{R}^n,
\end{cases}
\end{align*} then the following energy estimate holds for any $t\in [0,T)$ and for arbitrary small $\eta>0$: 
\begin{equation}\label{lemma before glob exist with exp weight est fund}
E_{\psi,u}(t)\lesssim I^2_{\mu_1,\mu_2}+I^{p+1}_{\mu_1,\mu_2}+\bigg(\sup_{s\in[0,t]}(1+s)^\eta\|\mathrm{e}^{\left(\frac{2}{p+1}+\eta\right)\psi(s,\cdot)}u(s,\cdot)\|_{L^{p+1}}\bigg)^{p+1},
\end{equation} where 
\begin{align*}
I^2_{\mu_1,\mu_2}&=\int_{\mathbb{R}^n}\mathrm{e}^{\,\mu_1|x|^2}\left(|u_1(x)|^2+|\nabla u_0(x)|^2+\mu^2_2|u_0(x)|^2\right)dx.
\end{align*}
\end{lemma}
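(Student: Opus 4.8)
The plan is to run the weighted energy identity \eqref{fundamental equality 2} for the nonlinear problem, integrate it over $[0,t]\times\mathbb{R}^n$, and absorb the "bad" right-hand side term $-2\psi_t\mathrm{e}^{2\psi}\frac{|u|^pu}{p+1}$ (which is nonnegative since $\psi_t\le 0$) by trading a small amount of the exponential weight. First I would integrate \eqref{fundamental equality 2}: the divergence term disappears (since for each fixed $t$ the integrand is $L^1$ and we may use the weak divergence theorem as in the proof of Theorem \ref{thm local existence}), and we obtain, for every $t\in[0,T)$,
\begin{align*}
E_{\psi,u}(t)-\int_{\mathbb{R}^n}\mathrm{e}^{2\psi(t,x)}\frac{|u(t,x)|^pu(t,x)}{p+1}\,dx
&\leq E_{\psi,u}(0)-\int_{\mathbb{R}^n}\mathrm{e}^{2\psi(0,x)}\frac{|u_0(x)|^pu_0(x)}{p+1}\,dx\\
&\quad -\frac{2}{p+1}\int_0^t\int_{\mathbb{R}^n}\psi_t(s,x)\,\mathrm{e}^{2\psi(s,x)}|u(s,x)|^pu(s,x)\,dx\,ds.
\end{align*}
The data terms on the right are controlled by $I^2_{\mu_1,\mu_2}+I^{p+1}_{\mu_1,\mu_2}$: indeed $E_{\psi,u}(0)=\tfrac12 I^2_{\mu_1,\mu_2}$ after noting $\psi(0,x)=\tfrac{\mu_1}{2}|x|^2$, and the data nonlinear term is $\lesssim\|\mathrm{e}^{\frac{2}{p+1}\psi(0,\cdot)}u_0\|_{L^{p+1}}^{p+1}$, which is $\lesssim I_{\mu_1,\mu_2}^{p+1}$ via Lemma \ref{1 lemma GN with weight} (to pass from $u_0$ to $\nabla u_0$ in the weighted $L^2$, hence the weighted $L^{p+1}$ after the Gagliardo--Nirenberg step of Lemma \ref{2 lemma GN with weight}, absorbing the finitely many $(1+0)$ factors).

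Next I would handle the two genuinely nonlinear terms. The term $\int_{\mathbb{R}^n}\mathrm{e}^{2\psi(t,x)}\frac{|u|^pu}{p+1}\,dx$ that sits on the left should be moved to the right and estimated by $\tfrac{1}{p+1}\|\mathrm{e}^{\frac{2}{p+1}\psi(t,\cdot)}u(t,\cdot)\|_{L^{p+1}}^{p+1}$. To make this fit the shape of the claimed bound, I insert the harmless factor $(1+t)^\eta(1+t)^{-\eta}$ and bound $\mathrm{e}^{\frac{2}{p+1}\psi}\le\mathrm{e}^{(\frac{2}{p+1}+\eta)\psi}$ (using $\psi\ge0$), so that this term is $\le(1+t)^{-\eta\cdot 0}$... more carefully, it is $\le\big(\sup_{s\in[0,t]}(1+s)^\eta\|\mathrm{e}^{(\frac{2}{p+1}+\eta)\psi(s,\cdot)}u(s,\cdot)\|_{L^{p+1}}\big)^{p+1}$ up to the constant, which is exactly the last term in \eqref{lemma before glob exist with exp weight est fund}. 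The spacetime integral $-\tfrac{2}{p+1}\int_0^t\int_{\mathbb{R}^n}\psi_t\mathrm{e}^{2\psi}|u|^pu\,dx\,ds$ is the main obstacle: here one uses the explicit form $\psi_t(s,x)=-\tfrac{\mu_1|x|^2}{(1+s)^3}$, so $-\psi_t=\tfrac{\mu_1|x|^2}{(1+s)^3}$, and the extra weight $\mathrm{e}^{\eta\psi}=\mathrm{e}^{\frac{\eta\mu_1|x|^2}{2(1+s)^2}}$ in the supremum dominates any polynomial in $\tfrac{|x|}{1+s}$; concretely one bounds $\tfrac{\mu_1|x|^2}{(1+s)^3}\mathrm{e}^{2\psi(s,x)}=\tfrac{\mu_1|x|^2}{(1+s)^3}\,\mathrm{e}^{-\eta\psi(s,x)}\mathrm{e}^{(2+\eta)\psi(s,x)}\le\tfrac{C_\eta}{(1+s)}\,\mathrm{e}^{(2+\eta)\psi(s,x)}$ using $r^2\mathrm{e}^{-c r^2}\le C_c$ with $r=\tfrac{|x|}{1+s}$. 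Hence this term is $\lesssim\int_0^t\tfrac{1}{1+s}\|\mathrm{e}^{(\frac{2}{p+1}+\frac{\eta}{p+1})\psi(s,\cdot)}u(s,\cdot)\|_{L^{p+1}}^{p+1}\,ds$ — wait, the power splitting must be done so the total weight per factor of $u$ is $\tfrac{2}{p+1}+\eta$: write $\mathrm{e}^{(2+\eta)\psi}=\big(\mathrm{e}^{(\frac{2}{p+1}+\frac{\eta}{p+1})\psi}\big)^{p+1}$ and note $\tfrac{\eta}{p+1}\le\eta$, then pull out $\sup_{s\le t}(1+s)^\eta\|\mathrm{e}^{(\frac{2}{p+1}+\eta)\psi(s,\cdot)}u\|_{L^{p+1}}$ raised to the power $p+1$, leaving $\int_0^t(1+s)^{-1-\eta(p+1)}\,ds\le\tfrac{1}{\eta(p+1)}<\infty$. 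This produces exactly the last term of \eqref{lemma before glob exist with exp weight est fund} times a constant depending on $\eta$.

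Collecting the three pieces gives $E_{\psi,u}(t)\lesssim I^2_{\mu_1,\mu_2}+I^{p+1}_{\mu_1,\mu_2}+\big(\sup_{s\in[0,t]}(1+s)^\eta\|\mathrm{e}^{(\frac{2}{p+1}+\eta)\psi(s,\cdot)}u(s,\cdot)\|_{L^{p+1}}\big)^{p+1}$, which is \eqref{lemma before glob exist with exp weight est fund}. The delicate point to get right is the absorption of the $\psi_t$ spacetime integral: one must be sure the time-decay $(1+s)^{-1}$ generated there is \emph{integrable} after the supremum is extracted, which is why the artificial factor $(1+s)^\eta$ inside the supremum — and correspondingly the power $(1+s)^{-1-\eta(p+1)}$ under the integral — is essential; with $\eta=0$ the bound would only give a logarithmic loss. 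A secondary technical care is justifying the vanishing of the boundary/divergence contributions and the finiteness of every integral along the local solution, which is legitimate because $u\in\mathrm{E}(T)$ by Theorem \ref{thm local existence}, so the weighted energy and all the weighted $L^{p+1}$ norms appearing are finite on $[0,T)$; the Gagliardo--Nirenberg estimate of Lemma \ref{2 lemma GN with weight} (with $q=p+1$, valid precisely under $p\le\frac{n}{n-2}$ for $n\ge3$) guarantees that the supremum term is finite for each fixed $t<T$.
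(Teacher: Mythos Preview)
Your proposal is correct and follows essentially the same approach as the paper: integrate \eqref{fundamental equality 2}, control the initial data terms by $I^2_{\mu_1,\mu_2}+I^{p+1}_{\mu_1,\mu_2}$, and absorb the $|\psi_t|\mathrm{e}^{2\psi}|u|^{p+1}$ spacetime integral by trading a small exponential weight $\mathrm{e}^{-c\eta\psi}$ against the factor $\psi$ to generate an integrable $(1+s)^{-1-\eta(p+1)}$. The only cosmetic differences are that the paper bounds the data term $\int\mathrm{e}^{\mu_1|x|^2}|u_0|^{p+1}dx$ via the Sobolev embedding $H^1\hookrightarrow L^{p+1}$ directly rather than through Lemma \ref{2 lemma GN with weight}, and it absorbs $\mathrm{e}^{-\eta(p+1)\psi}$ (rather than your $\mathrm{e}^{-\eta\psi}$) so the remaining weight is exactly $\mathrm{e}^{(\frac{2}{p+1}+\eta)(p+1)\psi}$ without the extra monotonicity step $\frac{\eta}{p+1}\le\eta$.
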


For the proof of this lemma we follow \cite[Lemma 2.1]{IkeTa05} and \cite[Lemma 5.2]{DabbLucRei13}.

\begin{proof}
We firstly prove that 
\begin{align}
E_{\psi,u}(t)&\lesssim I^2_{\mu_1,\mu_2}\!+I^{p+1}_{\mu_1,\mu_2}+\|\mathrm{e}^{\frac{2}{p+1}\psi(t,\cdot)}u(t,\cdot)\|_{L^{p+1}}^{p+1}\notag\\&\quad +\int_0^t\int_{\mathbb{R}^n}|\psi_t(s,x)|\mathrm{e}^{2\psi(s,x)}|u(s,x)|^{p+1}dxds.\label{lemma before glob exist with exp weight est 1}
\end{align}
Integrating the relation \eqref{fundamental equality 2} over $[0,t]\times\mathbb{R}^n$, we get immediately (after using the divergence theorem)
\begin{align*}
\mathscr{E}_{\psi,u}(t)\leq \mathscr{E}_{\psi,u}(0)-\frac{2}{p+1}\int_0^t\int_{\mathbb{R}^n}\psi_t(s,x)\mathrm{e}^{2\psi(s,x)}|u(s,x)|^{p}u(s,x)dxds,
\end{align*} where
\begin{align*}
\mathscr{E}_{\psi,u}(t)=E_{\psi,u}(t)-\frac{1}{p+1}\int_{\mathbb{R}^n}\mathrm{e}^{2\psi(t,x)}|u(t,x)|^{p}u(t,x)dx.
\end{align*}

 Consequently,
\begin{align*}
E_{\psi,u}(t)&\leq \mathscr{E}_{\psi,u}(0)+\frac{1}{p+1}\int_{\mathbb{R}^n}\mathrm{e}^{2\psi(t,x)}|u(t,x)|^{p}u(t,x)dx \\ & \quad -\frac{2}{p+1}\int_0^t\int_{\mathbb{R}^n}\psi_t(s,x)\mathrm{e}^{2\psi(s,x)}|u(s,x)|^{p}u(s,x)dxds\\
&\lesssim \mathscr{E}_{\psi,u}(0)+\|\mathrm{e}^{\frac{2}{p+1}\psi(t,\cdot)}u(t,\cdot)\|_{L^{p+1}}^{p+1}\\ & \quad +\int_0^t\int_{\mathbb{R}^n}|\psi_t(s,x)|\mathrm{e}^{2\psi(s,x)}|u(s,x)|^{p+1}dxds.
\end{align*}
 So, in order to prove \eqref{lemma before glob exist with exp weight est 1} we have just to show that $\mathscr{E}_{\psi,u}(0)\lesssim  I^2_{\mu_1,\mu_2}+I^{p+1}_{\mu_1,\mu_2}$. 
 
 Since 
\begin{align*}
\mathscr{E}_{\psi,u}(0)&=E_{\psi,u}(0)-\frac{1}{p+1}\int_{\mathbb{R}^n}\mathrm{e}^{\,\mu_1|x|^2}|u_0(x)|^p u_0(x)dx\\ &\lesssim I^2_{\mu_1,\mu_2}+\int_{\mathbb{R}^n}\mathrm{e}^{\,\mu_1|x|^2}|u_0(x)|^{p+1}dx,
\end{align*} we have to prove only that $$\displaystyle{\int_{\mathbb{R}^n}\mathrm{e}^{\,\mu_1|x|^2}|u_0(x)|^{p+1}dx\lesssim I^{p+1}_{\mu_1,\mu_2}}.$$

Because of $p+1<\frac{n}{n-2}+1<\frac{2n}{n-2}$ for $n\geq 3$ (no requirement for $n=1,2$), using the Sobolev embedding $H^1\hookrightarrow L^{p+1}$ we find
\begin{align*}
\int_{\mathbb{R}^n}&\mathrm{e}^{\,\mu_1|x|^2} |u_0(x)|^{p+1}dx
\lesssim \|\mathrm{e}^{\frac{\mu_1}{p+1}|x|^2}u_0(x)\|_{H^1}^{p+1}\\
&\quad=\left(\int_{\mathbb{R}^n}\mathrm{e}^{\frac{2\mu_1}{p+1}|x|^2}\left(\left(1+\left(\tfrac{2\mu_1}{p+1}\right)^2|x|^2\right)|u_0(x)|^{2}+|\nabla u_0(x)|^{2}\right)dx
\right)^{\frac{p+1}{2}}\\
&\quad \lesssim \left(\int_{\mathbb{R}^n}\mathrm{e}^{\,\mu_1|x|^2}\left(|u_0(x)|^{2}+|\nabla u_0(x)|^{2}\right)dx\right)^{\frac{p+1}{2}}\lesssim I_{\mu_1,\mu_2}^{p+1},
\end{align*} where in the second last inequality, we have used the fact that $p>1$ to get the estimate 
\begin{align*}
\left(1+\left(\tfrac{2\mu_1}{p+1}\right)^2|x|^2\right)\mathrm{e}^{\frac{2\mu_1}{p+1}|x|^2}\lesssim \mathrm{e}^{\,\mu_1|x|^2}.
\end{align*} 
So we proved \eqref{lemma before glob exist with exp weight est 1}.
Let us remark now that from the inequality $z\mathrm{e}^{-az}\leq C_a$ for any $a>0$  and the relation $\psi_t(s,x)=-\frac{2}{1+s}\psi(s,x)$, it follows
\begin{align*}
|\psi_t(s,x)|\mathrm{e}^{(2-\gamma(p+1))\psi(s,x)}=\tfrac{2}{1+s}\psi(s,x)\mathrm{e}^{-\eta(p+1)\psi(s,x)}\lesssim (1+s)^{-1},
\end{align*} with $\gamma=\frac{2}{p+1}+\eta$ and $\eta>0$. 

Therefore
\begin{align}
&\int_0^t\int_{\mathbb{R}^n}|\psi_t(s,x)|\mathrm{e}^{2\psi(s,x)}|u(s,x)|^{p+1}dx ds \notag\\ &\qquad\lesssim \int_0^t (1+s)^{-1}\int_{\mathbb{R}^n}\mathrm{e}^{\gamma (p+1)\psi(s,x)}|u(s,x)|^{p+1}dxds\notag \\
&\qquad \leq \sup_{s\in[0,t]}(1+s)^{\eta(p+1)}\|\mathrm{e}^{\gamma\psi(s,\cdot)}u(s,\cdot)\|_{L^{p+1}}^{p+1}\int_0^t (1+s)^{-1-\eta(p+1)}ds\notag\\
&\qquad \lesssim \bigg(\sup_{s\in[0,t]}(1+s)^{\eta}\|\mathrm{e}^{\gamma\psi(s,\cdot)}u(s,\cdot)\|_{L^{p+1}}\bigg)^{p+1}.\label{lemma before glob exist with exp weight est 2}
\end{align} 
Finally, since $\gamma>\frac{2}{p+1}$ and $\eta>0$
\begin{align}\label{last estimate local existence}
\|\mathrm{e}^{\frac{2}{p+1}\psi(t,\cdot)}u(t,\cdot)\|_{L^{p+1}}^{p+1}\leq \left((1+t)^{\eta}\|\mathrm{e}^{\gamma\psi(t,\cdot)}u(t,\cdot)\|_{L^{p+1}}\right)^{p+1}.
\end{align} Hence, combining \eqref{last estimate local existence}, \eqref{lemma before glob exist with exp weight est 1} and \eqref{lemma before glob exist with exp weight est 2} we get the desired estimate \eqref{lemma before glob exist with exp weight est fund}.
\end{proof} 

Combing the \emph{linear estimates} from Section \ref{Section Linear estimates} and Lemma \ref{lemma before glob exist with exp weight}, we can finally prove Theorem \ref{thm glob exi exp data}. Here we follow the main steps of \cite[Theorem 3]{D13}.

\begin{proof}[Proof of Theorem \ref{thm glob exi exp data}]
Let us assume that for any $\varepsilon_0>0$ there exists data satisfying \eqref{global existence eponential weight data cond} such that the solution $u\in\mathcal{C}\big([0,T_m),H^1_{\psi(t,\cdot)}\big)\cap \mathcal{C}^1\big([0,T_m),L^2_{\psi(t,\cdot)}\big)$ to the corresponding problem, whose existence is guaranteed by Theorem \ref{thm local existence}, is not global in time, that means $T_m<\infty$.

For any $T\in (0,T_m)$, we may define the Banach space 
\begin{align*}
X(T):=\mathcal{C}\big([0,T],H^1_{\psi(t,\cdot)}\big)\cap \mathcal{C}^1\big([0,T],L^2_{\psi(t,\cdot)}\big),
\end{align*} with the norm 
\begin{align*}
\|u&\|_{X(T)}:=\sup_{t\in[0,T]}\Big[(1+t)^{-1}\|u(t,\cdot)\|_{L^2_{\psi(t,\cdot)}}+(1+t)^{\frac{n}{2}+\frac{\mu_1}{2}-\frac{1}{2}-\frac{\sqrt{\delta}}{2}}\|u(t,\cdot)\|_{L^2}\\&\quad +\|(\nabla u,u_t)(t,\cdot)\|_{L^2_{\psi(t,\cdot)}} +(1+t)^{\frac{n}{2}+\frac{\mu_1}{2}+\frac{1}{2}-\frac{\sqrt{\delta}}{2}}\ell_\delta(t)^{-1}\|(\nabla u,u_t)(t,\cdot)\|_{L^2}\Big].
\end{align*} 

For simplicity of notations we will carry out the computations in the case $\delta>(n+1)^2$ only. However, in the logarithmic case $\delta=(n+1)^2$ no additional difficulty arises.

By Lemma \ref{lemma before glob exist with exp weight} it follows that
\begin{align}
(1+t)&^{-1}\|u(t,\cdot)\|_{L^2_{\psi(t,\cdot)}}+\|(\nabla u,u_t)(t,\cdot)\|_{L^2_{\psi(t,\cdot)}}\notag \\ &\lesssim   \varepsilon_0+\varepsilon_0^{\frac{p+1}{2}}+\bigg(\sup_{s\in[0,t]}(1+s)^\eta\|\mathrm{e}^{\left(\eta+\frac{2}{p+1}\right)\psi(s,\cdot)}u(s,\cdot)\|_{L^{p+1}}\bigg)^{\frac{p+1}{2}}.\label{global existence thm est 1}
\end{align} 
On the other hand from Theorems \ref{Prop Lin Estim} and \ref{Prop Lin Estim v_0=0 for t>s} we have
\begin{align}
&(1+t)^{\frac{n}{2}+\frac{\mu_1}{2}-\frac{1}{2}-\frac{\sqrt{\delta}}{2}}\left(\|u(t,\cdot)\|_{L^2}+(1+t)\|(\nabla u,u_t)(t,\cdot)\|_{L^2}\right)\notag\\&\qquad\lesssim \varepsilon_0+\int_0^t (1+s)^{\frac{1+\mu_1}{2}-\frac{\sqrt{\delta}}{2}}\Big(\|\,|u(s,\cdot)|^p\|_{L^1}+(1+s)^\frac{n}{2}\|\,|u(s,\cdot)|^p\|_{L^2}\Big)ds.\label{global existence thm est 1,5}
\end{align}
 Using \eqref{L1-L2 weight est} and \eqref{L2-L2 weight est} for $f=|u(s,\cdot)|^p$, we get immediately that
\begin{align}\label{global existence thm est 2}
\|\,|u(s,\cdot)|^p\|_{L^1}+(1+s)^\frac{n}{2}\|\,|u(s,\cdot)|^p\|_{L^2}\lesssim (1+s)^\frac{n}{2}\|\mathrm{e}^{\eta\psi(s,\cdot)}u(s,\cdot)\|_{L^{2p}}^p.
\end{align} 

Now we apply Lemma \ref{2 lemma GN with weight} in order to estimate the terms which appear in the right hand side of  \eqref{global existence thm est 1} and \eqref{global existence thm est 2}. 

Being $2<p+1<2p$ and $p+1<2p\leq \frac{2n}{n-2}$ if $n\geq 3$, we find that $\theta(p+1),\theta(2p)\in (0,1]$. Moreover, we can choose $\eta>0$ sufficiently small such that $\eta+\frac{2}{p+1}<1$. Hence, by Lemma \ref{2 lemma GN with weight} we obtain
\begin{align*}
\|\mathrm{e}^{\left(\eta+\frac{2}{p+1}\right)\psi(s,\cdot)}&u(s,\cdot)\|_{L^{p+1}} \\ &\lesssim (1+s)^{1-\theta(p+1)}\|\nabla u(s,\cdot)\|_{L^2}^{1-\left(\eta+\frac{2}{p+1}\right)}\|\mathrm{e}^{\psi(s,x)}\nabla u(s,\cdot)\|_{L^2}^{\eta+\frac{2}{p+1}}\\ &\lesssim (1+s)^{1-\theta(p+1)-\left(1-\left(\eta+\frac{2}{p+1}\right)\right)\left(\frac{n}{2}+\frac{\mu_1}{2}+\frac{1}{2}-\frac{\sqrt{\delta}}{2}\right)}\|u\|_{X(T)},
\end{align*} and  
\begin{align*}
\|\mathrm{e}^{\eta\psi(s,\cdot)}u(s,\cdot)\|_{L^{2p}}&\lesssim (1+s)^{1-\theta(2p)}\|\nabla u(s,\cdot)\|_{L^2}^{1-\eta}\|\mathrm{e}^{\psi(s,x)}\nabla u(s,\cdot)\|_{L^2}^{\eta}\\ &\lesssim (1+s)^{1-\theta(2p)-(1-\eta)\left(\frac{n}{2}+\frac{\mu_1}{2}+\frac{1}{2}-\frac{\sqrt{\delta}}{2}\right)}\|u\|_{X(T)}.
\end{align*} 
Then
\begin{align*}
\sup_{s\in[0,t]}&(1+s)^\eta\|\mathrm{e}^{\left(\eta+\frac{2}{p+1}\right)\psi(s,\cdot)}u(s,\cdot)\|_{L^{p+1}} \\ &\lesssim\|u\|_{X(T)} \sup_{s\in[0,t]}(1+s)^{\eta+1-\theta(p+1)-\left(1-\left(\eta+\frac{2}{p+1}\right)\right)\left(\frac{n}{2}+\frac{\mu_1}{2}+\frac{1}{2}-\frac{\sqrt{\delta}}{2}\right)},
\end{align*} and 
\begin{align*}
\int_0^t &(1+s)^{\frac{1+\mu_1}{2}-\frac{\sqrt{\delta}}{2}}\Big(\|\,|u(s,\cdot)|^p\|_{L^1}+(1+s)^\frac{n}{2}\|\,|u(s,\cdot)|^p\|_{L^2}\Big)ds\\&\quad\lesssim \|u\|_{X(T)}^p \int_0^t(1+s)^{\frac{1+\mu_1}{2}-\frac{\sqrt{\delta}}{2}+\frac{n}{2}+\big(1-\theta(2p)-(1-\eta)\big(\frac{n}{2}+\frac{\mu_1}{2}+\frac{1}{2}-\frac{\sqrt{\delta}}{2}\big)\big)p}ds .
\end{align*}
Being $p>p_{\Fuj}\Big(n+\frac{\mu_1-1}{2}-\frac{\sqrt{\delta}}{2}\Big)$, we can find $\eta>0$ such that 
\begin{align*}
&\eta+1-\theta(p+1)-\left(1-\left(\eta+\tfrac{2}{p+1}\right)\right)\left(\tfrac{n}{2}+\tfrac{\mu_1}{2}+\tfrac{1}{2}-\tfrac{\sqrt{\delta}}{2}\right)<0,\\
&\tfrac{1+\mu_1}{2}-\tfrac{\sqrt{\delta}}{2}+\tfrac{n}{2}+\left(1-\theta(2p)-(1-\eta)\left(\tfrac{n}{2}+\tfrac{\mu_1}{2}+\tfrac{1}{2}-\tfrac{\sqrt{\delta}}{2}\right)\right)p<-1.
\end{align*}

 Therefore from \eqref{global existence thm est 1} and \eqref{global existence thm est 1,5} it follows 
\begin{align}\label{est for the norm of u in X(T)}
\|u\|_{X(T)}\leq C_0\Big(\varepsilon_0+\varepsilon_0^{\frac{p+1}{2}}\Big)+C_1\|u\|_{X(T)}^{\frac{p+1}{2}}+C_2\|u\|_{X(T)}^p
\end{align} for some constants $C_0,C_1,C_2>0$. If $\varepsilon_0$ is small enough, then from the last inequality we get that $\|u\|_{X(T)}$ is uniformly bounded, more precisely $$\|u\|_{X(T)}\lesssim \varepsilon_0$$ for $T\in(0,T_m)$. 

Let us show how to prove the last property.
Define the function
\begin{align*}
\varphi(x)=x-C_1x^{\frac{p+1}{2}}-C_2x^{p}.
\end{align*} 
We have $\varphi(0)=0$ and $\varphi'(0)=1$. Furthermore, $\varphi(x)\leq x$ for any $x\geq 0$ and there exists $\bar{x}\geq 0$ such that $\varphi'(x)\geq \frac{1}{2}$ for any $x\in[0,\overline{x}]$.  Consequently, $\varphi$ is a strictly increasing function on $[0,\overline{x}]$ and 
\begin{equation}\label{phi inequality}
\varphi(x)\leq x \leq 2\varphi(x)
\end{equation}
for all $[0,\overline{x}]$. Let us define 
\begin{align*}
\varepsilon_0=\min\left\{1,\tfrac{\overline{x}}{2},\tfrac{\overline{x}}{4C_0}\right\}.
\end{align*} 

If $\|(u_0,u_1)\|_{\mathcal{A}}=\varepsilon$ for some $\varepsilon\in(0,\varepsilon_0]$, then 
\begin{align}\label{phi (||u||_X(0)) estimate}
\|u\|_{X(0)}\leq 2\|(u_0,u_1)\|_{\mathcal{A}}=2\varepsilon\leq \overline{x}.
\end{align} 
Since $\varphi$ is strictly increasing on $[0,\overline{x}]$, it follows:
\begin{equation*}
\varphi\big(\|u\|_{X(0)}\big)\leq \varphi(\overline{x}).
\end{equation*}
Thanks to \eqref{est for the norm of u in X(T)} we get 
\begin{equation}\label{phi (||u||_X(T)) estimate}
\varphi\big(\|u\|_{X(T)}\big)\leq C_0\Big(\varepsilon_0+\varepsilon_0^{\frac{p+1}{2}}\Big)\leq 2C_0\varepsilon_0.
\end{equation}

Hence, we find
\begin{equation}\label{phi (||u||_X(T)) estimate 2}
\varphi\big(\|u\|_{X(T)}\big)\leq \tfrac{\overline{x}}{2}\leq \varphi(\overline{x})
\end{equation} for any $T\in[0,T_m)$. Therefore, since $\|u\|_{X(T)}$ is a continuous function for $T\in(0,T_m)$ and using once again the fact that $\varphi$ is strictly increasing on $[0,\overline{x}]$, if we combine \eqref{phi (||u||_X(0)) estimate} and \eqref{phi (||u||_X(T)) estimate 2}, then it follows immediately that 
\begin{align*}
\|u\|_{X(T)}\leq \overline{x}
\end{align*} for any $T\in(0,T_m)$. Using \eqref{phi inequality}, from the above inequality and \eqref{phi (||u||_X(T)) estimate} we get the desired inequality
\begin{align*}
\|u\|_{X(T)}\leq 2 \varphi\big(\|u\|_{X(T)}\big)\leq 4C_0\varepsilon_0.
\end{align*}
 Therefore, it holds
\begin{align*}
\limsup_{T\to T_m^-}\left(\|u(t,\cdot)\|_{H^1_{\psi(t,\cdot)}}+\|u_t(t,\cdot)\|_{L^2_{\psi(t,\cdot)}}\right)\lesssim \limsup_{T\to T_m^-}\|u\|_{X(T)}\lesssim \varepsilon_0 .
\end{align*} 

Nevertheless, this is impossible according to the last part of Theorem \ref{thm local existence}, so  $T_m=\infty$, that is $u$, has to be a global solution. The estimates of the statement follows by the relation $\|u\|_{X(T)}\lesssim \varepsilon_0$, which holds uniformly with respect to $T$. 
\end{proof}

\section{Blow-up: proof of Theorem \ref{thm blow up}}\label{B-U Section}

Let us state a preliminary lemma that we are going to use in the proof of Theorem \ref{thm blow up}. 

\begin{lemma}\label{lemma ODI} Let $K_0,K_1>0$, $\alpha\geq -2$ and $p>1$. Let $F\in \mathcal{C}^2([0,T))$ satisfying
\begin{align}
&\ddot{F}(t)+K_0 (1+t)^{-1} \dot{F}(t)\geq K_1 (1+t)^\alpha |F(t)|^p \qquad \mbox{for any} \,\, t\in [0,T). \label{ordinary differential inequality for F}
\end{align}  If 
$F(0)>0$, $\dot{F}(0)>0$, then $T<\infty$, that is,  $F$ blows up in finite time.
\end{lemma}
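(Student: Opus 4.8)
The plan is to argue by contradiction, assuming that the maximal existence time $T$ is infinite, and then to bootstrap the differential inequality into an ordinary differential inequality of Riccati type that cannot have a global solution. First I would observe that the assumption $\dot F(0)>0$, together with \eqref{ordinary differential inequality for F} and $F(0)>0$, forces $F$ to stay positive and increasing as long as it exists: indeed, rewriting \eqref{ordinary differential inequality for F} as $\frac{d}{dt}\bigl((1+t)^{K_0}\dot F(t)\bigr)\geq K_1(1+t)^{K_0+\alpha}|F(t)|^p\geq 0$ shows that $(1+t)^{K_0}\dot F(t)$ is non-decreasing, hence $\dot F(t)\geq (1+t)^{-K_0}\dot F(0)>0$ for all $t$, and so $F(t)\geq F(0)>0$ and $F$ is strictly increasing. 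This monotonicity is the crucial structural fact that makes the polynomial nonlinearity usable.

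Next I would derive a lower bound on $F$ that grows in $t$. Integrating $\frac{d}{dt}\bigl((1+t)^{K_0}\dot F(t)\bigr)\geq K_1(1+t)^{K_0+\alpha}F(t)^p$ from $0$ to $t$ and using $F(s)\geq F(0)$ gives $(1+t)^{K_0}\dot F(t)\gtrsim (1+t)^{K_0+\alpha+1}$ (here the hypothesis $\alpha\geq -2$ guarantees $K_0+\alpha+1>K_0-1$, so the integral genuinely grows, or at worst one still gets a usable bound; a second integration then yields a polynomial lower bound $F(t)\geq c(1+t)^{\alpha+2}$ for $t$ large, with $c>0$). Feeding this improved lower bound back into the integrated inequality produces, after another pair of integrations, a stronger power of $(1+t)$, and iterating this bootstrap finitely many times one reaches a lower bound $F(t)\geq c_k(1+t)^{a_k}$ with exponents $a_k\to\infty$; alternatively, and more cleanly, after one bootstrap step one can already close the argument as follows.

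For the blow-up itself I would introduce the standard device of testing against the first eigenfunction substitute for ODEs, namely work directly with $F$: multiply \eqref{ordinary differential inequality for F} by an integrating factor and integrate twice to get an inequality of the form $F(t)\geq c_1 + c_2 t + C\int_0^t (1+s)^{\alpha}\Bigl(\int_0^s (1+\tau)^{?}F(\tau)^p\,d\tau\Bigr)(\text{weight})\,ds$, and then apply a comparison/ODE-blow-up argument: one shows that a function satisfying $\ddot F \gtrsim (1+t)^\alpha F^p$ with $F,\dot F>0$ initially must blow up, because multiplying by $\dot F>0$ and integrating gives $\dot F(t)^2\gtrsim \dot F(0)^2 + \int_0^t (1+s)^\alpha F(s)^p\dot F(s)\,ds\gtrsim F(t)^{p+1}$ (using $\alpha\geq 0$ on a suitable range, or absorbing the $(1+t)^\alpha$ factor using the polynomial growth of $F$ already established when $-2\le\alpha<0$), hence $\dot F\gtrsim F^{(p+1)/2}$, and since $(p+1)/2>1$ this Bernoulli-type inequality forces $F$ to reach $+\infty$ in finite time — contradicting $T=\infty$. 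The main obstacle I anticipate is handling the damping term $K_0(1+t)^{-1}\dot F$ and the possibly negative power $(1+t)^\alpha$ simultaneously: the damping weakens the growth of $\dot F$, so one cannot simply drop it, and one must track carefully how the integrating factor $(1+t)^{K_0}$ interacts with $(1+t)^\alpha$ to be sure the iterated integrals still diverge fast enough; the condition $\alpha\geq -2$ is precisely what is needed to keep the first integration from being too lossy, and getting the bookkeeping of exponents right through the bootstrap is the delicate part.
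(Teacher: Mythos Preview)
Your route is quite different from the paper's, and the closing step has a real gap. The paper does not iterate lower bounds on $F$; instead it constructs an explicit comparison function $G$ solving the first-order separable ODE $\dot G=\nu(1+t)^{\alpha+1}G^{(p+1)/2}$ with $G(0)=F(0)$, which blows up at a computable finite time $T_0$ (this is where $\alpha\ge -2$ is used: for $\alpha>-2$ one gets a power law, for $\alpha=-2$ a logarithm, and in both cases $G^{-(p-1)/2}$ reaches zero). For $\nu>0$ small enough a direct computation gives $\ddot G+K_0(1+t)^{-1}\dot G\le K_1(1+t)^{\alpha}G^p$ and $\dot G(0)<\dot F(0)$. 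The comparison $F\ge G$ is then established by exactly the integrating-factor identity you found in your first paragraph: if $t_0$ were the first time with $\dot F(t_0)=\dot G(t_0)$, integrating $\frac{d}{dt}\bigl[(1+t)^{K_0}(\dot F-\dot G)\bigr]\ge K_1(1+t)^{K_0+\alpha}(F^p-G^p)\ge 0$ over $[0,t_0]$ yields a contradiction. This covers all $K_0>0$ and $\alpha\ge -2$ uniformly, with no bootstrap and no case analysis.

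The gap in your plan is the step ``multiply by $\dot F$ to get $\dot F^2\gtrsim F^{p+1}$''. Multiplying \eqref{ordinary differential inequality for F} by $\dot F$ gives $\tfrac12\tfrac{d}{dt}\dot F^2 + K_0(1+t)^{-1}\dot F^2 \ge K_1(1+t)^{\alpha}F^p\dot F$, and the damping term sits on the wrong side: it cannot simply be dropped. If you instead multiply $\dot H\ge K_1(1+t)^{K_0+\alpha}F^p$ (with $H=(1+t)^{K_0}\dot F$) by $H$ and integrate, the best you obtain in the range $2K_0+\alpha>0$ is $\dot F\gtrsim (1+t)^{-K_0}F^{(p+1)/2}$; for $K_0>1$ the weight $(1+t)^{-K_0}$ is integrable on $[0,\infty)$, so separation of variables no longer forces blow-up. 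Since the application in Theorem~\ref{thm blow up} has $K_0=1+\sqrt{\delta}>1$, this is precisely the regime you must handle. Your bootstrap does not rescue this either: at the critical value $\alpha=-2$ with $K_0\ge 1$ the recursion $a_{k+1}=(\alpha+2)+pa_k$ stalls at $a_k\equiv 0$ and you only ever get logarithmic growth of $F$, so there is no polynomial rate available to ``absorb $(1+t)^\alpha$'' as you propose. A careful Kato-lemma iteration tracking the constants $c_k$ might be salvageable, but that is a different and much longer argument than the one you sketched, whereas the paper's subsolution comparison closes in a few lines.
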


The previous lemma is a variation of \cite[Proposition 3.1]{TodYor01} that fits finely to our equation.

\begin{proof}
Let us consider the solution $G(t)$ of the ordinary differential equation
\begin{align}
\dot{G}(t)=\nu (1+t)^{\alpha+1}(G(t))^{\frac{p+1}{2}},\label{ODE for G}
\end{align} with initial condition $G(0)=F(0)>0$, where $\nu$ is a positive constant suitably small that we will fix afterwards.
By using separation of variables, we get 
\begin{align*}
G(t)^{-\frac{p-1}{2}}&=G(0)^{-\frac{p-1}{2}}-\tfrac{2\nu}{(p-1)(\alpha+2)}\big((1+t)^{\alpha+2}-1\big) & \mbox{if} \,\, \alpha>-2, \\
G(t)^{-\frac{p-1}{2}}&=G(0)^{-\frac{p-1}{2}}-\tfrac{2\nu}{p-1}\log(1+t)& \mbox{if} \,\, \alpha=-2 .
\end{align*}  In both cases the right-hand side tends to $0^+$ as $t\to T_0^-$, where $T_0$ denotes the life span of $G$
\begin{align*}
T_0:=\begin{cases} \Big(\frac{(p-1)(\alpha+2)}{2\nu}G(0)^{-\frac{p-1}{2}}+1\Big)^{\frac{1}{\alpha+2}}-1 & \mbox{if} \,\, \alpha> -2, \\ 
\mathrm{e}^{\frac{p-1}{2\nu}G(0)^{-\frac{p-1}{2}}} -1& \mbox{if} \,\, \alpha= -2. \end{cases}
\end{align*}

 Consequently, $G(t)$ blows up as $t\to T_0^-$. If we prove that is possible to control from below $F(t)$ with $G(t)$, then necessarily $F(t)$ blows up in finite time as well. 
 
 For the second derivative of $G$ it holds
\begin{align*}
\ddot{G}(t)
= \tfrac{\nu^2(p+1)}{2} (1+t)^{2(\alpha+1)}(G(t))^{p}+\nu (\alpha+1)(1+t)^\alpha(G(t))^{\frac{p+1}{2}}.
\end{align*} Thus,
\begin{align}
&\ddot{G}(t)+K_0(1+t)^{-1}\dot{G}(t) \notag
\\&\leq \tfrac{\nu^2(p+1)}{2} (1+t)^{2(\alpha+1)}(G(t))^{p}+\nu (\alpha+1+K_0) (1+t)^\alpha(G(t))^{p}(G(0))^{-\frac{p-1}{2}} \notag \\
&\leq \nu\Big(\tfrac{\nu(p+1)}{2} +(\alpha+1+K_0)(G(0))^{-\frac{p-1}{2}} \Big) (1+t)^\alpha(G(t))^{p}, \label{ordinary differential inequality for G}
\end{align} where in the first inequality we used $(G(t))^{-\frac{p-1}{2}}\leq (G(0))^{-\frac{p-1}{2}}$, while in second one we employed $\alpha\geq -2$.
We may choose $\nu>0$ such that
\begin{align*}
\begin{cases}
\nu\Big(\tfrac{\nu(p+1)}{2} +(\alpha+1+K_0)(G(0))^{-\frac{p-1}{2}} \Big) <K_1, \\  \dot{G}(0)=\nu (G(0))^{\frac{p+1}{2}}<\dot{F}(0).
\end{cases}
\end{align*}
Hence,
\begin{align*}
\begin{cases} \ddot{G}(t)+K_0(1+t)^{-1}\dot{G}(t)\leq K_1(1+t)^{\alpha}(G(t))^{p},  \\ 
G(0)=F(0)\, , \,\,  \dot{G}(0)<\dot{F}(0).
\end{cases}
\end{align*} 

The next step is to prove that $F(t)\geq G(t)$ for any $t\in [0,T_0)$, which implies the blow-up of $F$ in finite time.

If the life-span of $F(t)$ is strictly less than $T_0$, we are done. It remains to consider the case in which $F(t)$ is defined for any $t<T_0$.

 Because of the continuity of $\dot{F}$ and $\dot{G}$ the inequality $\dot{F}(0)>\dot{G}(0)$ implies $\dot{F}(t)>\dot{G}(t)$ at least for $t$ in a right neighborhood of $0$. Let us define 
\begin{align*}
t_0:=\sup\big\{t\in (0,T_0]: \dot{F}(\tau)>\dot{G}(\tau) \,\, \mbox{for any} \,\, \tau \in [0,t)\big\}.
\end{align*}
 If we prove that $t_0=T_0$, then $F-G$ is strictly increasing on $(0,T_0)$ and, in particular, $F(0)=G(0)$ implies $F(t)>G(t)$ for any $t\in (0,T_0)$ which concludes the proof.

By contradiction we assume that $t_0<T_0$. Therefore, $F-G$ is strictly increasing on $(0,t_0)$, and consequently $F(t)>G(t)$ for all $t\in (0,t_0)$.

 Moreover, it holds $F(t_0)>G(t_0)$, otherwise since we had $F(0)=G(0)$ and $F(t_0)=G(t_0)$ by the mean value theorem we would find $t_1\in (0,t_0)$ such that $\dot{F}(t_1)=\dot{G}(t_1)$, but this would be impossible according to the definition of $t_0$. Then $F(t_0)>G(t_0)$ and $\dot{F}(t_0)=\dot{G}(t_0)$. 
 
 Subtracting \eqref{ordinary differential inequality for G} from \eqref{ordinary differential inequality for F} we obtain
\begin{align}
\big(\ddot{F}(t)-\ddot{G}(t)\big)+& K_0(1+t)^{-1}\big(\dot{F}(t)-\dot{G}(t)\big)\notag\\ & \geq K_1(1+t)^\alpha \big[(F(t))^p-(G(t))^p\big]\geq 0  \label{ordinary diffenetial inequality for F-G}
\end{align}for any $t\in [0,t_0]$.

Multiplying both sides of inequality \eqref{ordinary diffenetial inequality for F-G} by $(1+t)^{K_0}$, we have
\begin{align*}
\frac{d}{dt}\big[(1+t)^{K_0}\big(\dot{F}(t)-\dot{G}(t)\big)\big]\geq 0.
\end{align*} Integrating the above relation on $[0,t_0]$, it follows
\begin{align}
(1+t_0)^{K_0}\big(\dot{F}(t_0)-\dot{G}(t_0)\big)\geq \dot{F}(0)-\dot{G}(0)>0,
\end{align} and hence $\dot{F}(t_0)>\dot{G}(t_0)$, which is impossible. This concludes the proof.
\end{proof}

\begin{proof}[Proof of Theorem \ref{thm blow up}]
Let us carry out the transformation
\begin{align*}
v(t,x):=(1+t)^{\frac{\mu_1-1}{2}-\frac{\sqrt{\delta}}{2}}u(t,x).
\end{align*} 
Thus, $u$ is a solution to \eqref{CP semilinear 1} if and only if $v$ is a solution to the following Cauchy problem:
\begin{align*}
\begin{cases} 
v_{tt}-\Delta v+ \frac{1+\sqrt{\delta}}{1+t}v_t=(1+t)^{-\frac{\mu_1-1-\sqrt{\delta}}{2}(p-1)}|v|^p, \\
v(0,x)=v_0(x), \\
v_t(0,x)=v_1(x).
\end{cases}
\end{align*}
where $v_0(x)=u_0(x)$ and $v_1(x)=u_1(x)+\Big(\tfrac{\mu_1-1}{2}-\tfrac{\sqrt{\delta}}{2}\Big)u_0(x)$.

 Let us choose $R>0$ such that $\supp v_0, \supp v_1 \subset B_R$, where $B_R$ is the ball centered in the origin with radius $R$. Thanks to the finite speed of propagation of $v$ we  have $\supp v(t,\cdot)\subset B_{R+t}$.
Define 
\begin{align*}
F(t):=\int_{\mathbb{R}^n} v(t,x) dx.
\end{align*}
Then we obtain 
\begin{align*}
\ddot{F}(t)&=\int_{\mathbb{R}^n} v_{tt}(t,x) dx=\int_{\mathbb{R}^n} \Delta v(t,x) dx-\tfrac{1+\sqrt{\delta}}{1+t}\int_{\mathbb{R}^n}  v_t(t,x) dx\\&\quad +(1+t)^{-\frac{\mu_1-1-\sqrt{\delta}}{2}(p-1)}\int_{\mathbb{R}^n} |v(t,x)|^p dx\\
&=-\tfrac{1+\sqrt{\delta}}{1+t}\dot{F}(t)+(1+t)^{-\frac{\mu_1-1-\sqrt{\delta}}{2}(p-1)}\int_{B_{t+R}} |v(t,x)|^p dx,
\end{align*} where in the last equality we use the divergence theorem and the fact that $\Delta v$ is compactly supported. Jensen's inequality implies that
\begin{align}\label{ODI F integral}
\ddot{F}(t)+\tfrac{1+\sqrt{\delta}}{1+t}\dot{F}(t)\gtrsim (1+t)^{-\big(n+\frac{\mu_1-1}{2}-\frac{\sqrt{\delta}}{2}\big)(p-1)}|F(t)|^p.
\end{align} The assumptions on $u_0,u_1$ guarantee that 
\begin{align*}
F(0)=\int_{\mathbb{R}^n} v_0(x) dx\,>0 \,\,\,\mbox{and} \,\,\, \dot{F}(0)=\int_{\mathbb{R}^n} v_1(x) dx>\,0.
\end{align*}

 Concluding we can apply Lemma \ref{lemma ODI}, since \eqref{upper bound p blow-up} corresponds to the fact that the power of $(1+t)$ in the right-hand side of \eqref{ODI F integral} is less than or equal to $-2$.
\end{proof}

\section{Conclusion}

Combining results obtained in Theorems \ref{thm glob exi exp data} and \ref{thm blow up}, we find that the critical exponent in every space dimension for the semi-linear Cauchy problem \eqref{CP semilinear 1} is $p_{\Fuj}\big(n+\frac{\mu_1-1}{2}-\frac{\sqrt{\delta}}{2}\big)$, provided that $\mu_1>0$ and $\mu_2^2$ satisfy the condition $\delta\geq (n+1)^2$.

Of course this result is consistent with that one proved in \cite{NunPalRei16}, according to our original purpose, that was exactly to extend such result to any spatial dimension for the same range of $\delta$.

Let us point out that one can not expect that this kind of approach with exponentially weighted energy spaces works optimally for any possible choice of coefficients $\mu_1$ and $\mu_2^2$, although Theorem \ref{thm local existence} holds independently of $\delta$. 

Indeed, as we have seen in our treatment, in some sense we consider the case in which the considered equation presents some \emph{parabolic effect}. Nevertheless, a deeper analysis of the linear equation related to \eqref{CP semilinear 1} shows that the interplay between the coefficients $\mu_1$ and $\mu_2^2$, described through the quantity $\delta$, may cause extremely different qualitative effects for different values of  $\delta$.
A more precise, but apparently still incomplete, classification of these different possible  effects is given in \cite{PalThe}.

Finally, we mention that in \cite{PalRei17} different ideas are applied in the study of \eqref{CP semilinear 1}. More precisely, in that paper the influence of higher regularity for data on the regularity of the solution is studied, analyzing the way in which the exponent of the non-linearity has to be chosen in order to prove the well-posedness of \eqref{CP semilinear 1} in Sobolev spaces with higher regularity. The reason to consider higher regularity is that in such a way we can either enlarge the upper bound for $p$ when $n\geq 3$ or even remove it for suitabely large regularity.

 Nevertheless, we have to pay for this choice by requiring a greater lower bound for $p$. For sake of completeness, let us present the statement of the above cited result and the main toolts from harmonic analysis that are necessary in the proof. For a deeper analysis of these tools and for the proof of the global existence result the interested reader can see \cite{PalRei17}.
\begin{prop} 
 Let us assume $\sigma\geq 1$. Let $\mu_1>1$ and $\mu_2$ be nonnegative constants such that $\delta \geq ( n+2\sigma-1)^2$. Let us choose $p\geq 2$ and $p>\lceil \sigma\rceil=:\min \{k\in \mathbb{Z}:\sigma\leq k\}$
 such that
\begin{align*}
p& \leq  1+\tfrac{2}{n-2\sigma} \quad \mbox{if}\,\, n > 2\sigma, \\
p&>p_{\Fuj}\left(n+\tfrac{\mu_1-1}{2}-\tfrac{\sqrt{\delta}}{2}\right).
\end{align*}
 Then there exists a constant $\varepsilon_0>0$ such that for all $$(u_0,u_1) \in \mathcal{D}^\sigma:= (L^1\cap H^\sigma )\times (L^1\cap H^{\sigma-1})$$ with
$ \| (u_0,u_1) \|_{\mathcal{D}^\sigma} \leq \varepsilon_0$ there is a uniquely determined energy solution $$u\in\mathcal{C}([0,\infty), H^\sigma) \cap \mathcal{C}^1([0,\infty), H^{\sigma-1})$$ to \eqref{CP semilinear 1}.
Moreover, the solution satisfies the decay estimates
\begin{align*}
\|u(t,\cdot)\|_{L^2}& \lesssim (1+t)^{-\frac{\mu_1}{2}+\frac{1+\sqrt{\delta}}{2}-\frac{n}{2}} \|(u_0,u_1)\|_{\mathcal{D}^\sigma}, \\
\|u_t(t,\cdot)\|_{L^2}& \lesssim (1+t)^{-\frac{\mu_1}{2}+\frac{1+\sqrt{\delta}}{2}-1-\frac{n}{2}}\|(u_0,u_1)\|_{\mathcal{D}^\sigma},\\
\| u(t,\cdot)\|_{\dot{H}^\sigma}& \lesssim (1+t)^{-\frac{\mu_1}{2}+\frac{1+\sqrt{\delta}}{2}-\sigma-\frac{n}{2}}\ell_{\delta,\sigma}(t) \|(u_0,u_1)\|_{\mathcal{D}^\sigma}, \\
\|u_t(t,\cdot)\|_{\dot{H}^{\sigma-1}}& \lesssim (1+t)^{-\frac{\mu_1}{2}+\frac{1+\sqrt{\delta}}{2}-\sigma-\frac{n}{2}}\ell_{\delta,\sigma}(t) \|(u_0,u_1)\|_{\mathcal{D}^\sigma},
\end{align*} where
\begin{align*}
\ell_{\delta,\sigma}(t)=\begin{cases}1 &\mbox{if}\quad \delta>\left( n+2\sigma-1\right)^2, \\1+\left(\log(1+t)\right)^\frac{1}{2} &\mbox{if}\quad \delta=\left( n+2\sigma-1\right)^2. \end{cases}
\end{align*}
\end{prop}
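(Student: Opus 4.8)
The statement is quoted here from \cite{PalRei17}, so the plan below is the route I would follow to reconstruct the argument; it runs parallel to the proof of Theorem \ref{thm glob exi exp data} but replaces the weighted energy machinery by $L^1$-based linear decay estimates in Sobolev spaces of order $\sigma$, and hence returns to the standard contraction scheme based on \eqref{contraction prociple 1st ineq}--\eqref{contraction prociple 2nd ineq}. For $T>0$ I would work in
\begin{align*}
X(T):=\mathcal{C}([0,T],H^\sigma)\cap\mathcal{C}^1([0,T],H^{\sigma-1})
\end{align*}
endowed with the time-weighted norm
\begin{align*}
\|u\|_{X(T)}:=\sup_{t\in[0,T]}\Big[&(1+t)^{\frac{\mu_1}{2}-\frac{1+\sqrt{\delta}}{2}+\frac{n}{2}}\|u(t,\cdot)\|_{L^2}+(1+t)^{\frac{\mu_1}{2}-\frac{1+\sqrt{\delta}}{2}+1+\frac{n}{2}}\|u_t(t,\cdot)\|_{L^2}\\
&+(1+t)^{\frac{\mu_1}{2}-\frac{1+\sqrt{\delta}}{2}+\sigma+\frac{n}{2}}\ell_{\delta,\sigma}(t)^{-1}\big(\|u(t,\cdot)\|_{\dot H^\sigma}+\|u_t(t,\cdot)\|_{\dot H^{\sigma-1}}\big)\Big],
\end{align*}
so that $\|u\|_{X(T)}<\infty$ is exactly equivalent to the four decay estimates claimed in the statement. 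The solution is sought as a fixed point in $X(T)$ of the operator $N$ from \eqref{definition N operator}.

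Next I would record the linear ingredients: Proposition \ref{Prop Lin Estim} in its $H^\sigma$ version (Theorems~4.6--4.7 of \cite{PalRei17}) for the data part $E_0(t,0,\cdot)\ast_{(x)}u_0+E_1(t,0,\cdot)\ast_{(x)}u_1$, and the analogue of Proposition \ref{Prop Lin Estim v_0=0 for t>s} for the Duhamel part, which estimates $\big\|\int_0^t E_1(t,s,\cdot)\ast_{(x)}|u(s,\cdot)|^p\,ds\big\|_{\dot H^\kappa}$ for $\kappa\in\{0,1,\sigma\}$ (and the corresponding $\dot H^{\sigma-1}$ bound for the time derivative) by an appropriate power of $(1+t)$ times $\int_0^t(1+s)^{\frac{1+\mu_1}{2}-\frac{\sqrt{\delta}}{2}}\big(\big\||u(s,\cdot)|^p\big\|_{L^1}+(1+s)^{\frac{n}{2}}\big\||u(s,\cdot)|^p\big\|_{L^2\cap\dot H^{\sigma-1}}\big)\,ds$. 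The hypothesis $\delta\geq(n+2\sigma-1)^2$ is exactly what places $\kappa=\sigma$ in the ``parabolic-type'' regime $\kappa\geq\frac{1+\sqrt{\delta}-n}{2}$ (with the logarithmic factor $\ell_{\delta,\sigma}$ at the borderline), so the time weights built into $\|\cdot\|_{X(T)}$ are admissible; the role of $\mu_1>1$ is to keep the decay powers in their stated sign.

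The core is then the nonlinear estimate. For the $L^1$- and $L^2$-norms of $|u|^p$ I would use the fractional Gagliardo--Nirenberg inequality, $\big\||u(s,\cdot)|^p\big\|_{L^r}\lesssim\|u(s,\cdot)\|_{L^2}^{p(1-\vartheta_r)}\|u(s,\cdot)\|_{\dot H^\sigma}^{p\vartheta_r}$ for $r=1,2$, which forces the upper bound $p\leq 1+\frac{2}{n-2\sigma}$ when $n>2\sigma$ in order to keep $\vartheta_r\in[0,1]$; for the $\dot H^{\sigma-1}$-norm I would invoke a fractional Leibniz / chain rule of Moser type, $\big\||u|^p\big\|_{\dot H^{\sigma-1}}\lesssim\|u\|_{L^\infty}^{p-1}\|u\|_{\dot H^{\sigma-1}}$, again combined with Gagliardo--Nirenberg, the conditions $p\geq 2$ and $p>\lceil\sigma\rceil$ ensuring that $z\mapsto|z|^p$ is smooth enough for this fractional rule. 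Feeding all of this into the linear estimates and the definition of $\|\cdot\|_{X(T)}$ yields $\|Nu\|_{X(T)}\leq C\|(u_0,u_1)\|_{\mathcal{D}^\sigma}+C\|u\|_{X(T)}^p$ together with the Lipschitz bound $\|Nu-Nv\|_{X(T)}\leq C\|u-v\|_{X(T)}\big(\|u\|_{X(T)}^{p-1}+\|v\|_{X(T)}^{p-1}\big)$, where the time integrals converge uniformly in $T$ precisely because $p>p_{\Fuj}\big(n+\frac{\mu_1-1}{2}-\frac{\sqrt{\delta}}{2}\big)$. Banach's fixed point theorem for $\varepsilon_0$ small then gives a unique fixed point in every $X(T)$, hence a global solution, and the decay estimates are read off from the norm.

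The main obstacle I expect is the $\dot H^{\sigma-1}$ nonlinear estimate for non-integer $\sigma$: one must select the correct fractional chain rule (or a fractional Gagliardo--Nirenberg inequality directly for $\||u|^p\|_{\dot H^{\sigma-1}}$) and carefully track the resulting powers of $(1+s)$, so that after integration the gain from the super-Fujita condition genuinely dominates. Ensuring simultaneously that the $L^2$-based exponent $\vartheta_r$ stays in $[0,1]$ (the bound $p\leq 1+\frac{2}{n-2\sigma}$), that $p>\lceil\sigma\rceil$ and $p\geq 2$ hold, and that $p>p_{\Fuj}\big(n+\frac{\mu_1-1}{2}-\frac{\sqrt{\delta}}{2}\big)$ is satisfiable is the delicate bookkeeping that makes the admissible range of $p$ nonempty under $\delta\geq(n+2\sigma-1)^2$; raising $\sigma$ relaxes (or removes, for $n\leq 2\sigma$) the upper bound on $p$ but tightens the lower bound through $\lceil\sigma\rceil$, which is the trade-off recorded in the statement.
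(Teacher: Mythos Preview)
The paper does not actually prove this proposition; it is quoted from \cite{PalRei17}, and the paper only records the three harmonic-analysis tools needed there: the fractional Gagliardo--Nirenberg inequality (Lemma~\ref{Thm FGNI}), the fractional Leibniz rule (Lemma~\ref{thm FLR}), and the fractional chain rule (Lemma~\ref{Thm FCR}). Your sketch is fully consistent with this: you set up the correct evolution space $X(T)$ with the right time weights, invoke the $H^\sigma$ linear estimates of \cite{PalRei17} (the natural extensions of Propositions~\ref{Prop Lin Estim}--\ref{Prop Lin Estim v_0=0 for t>s}), and close the contraction via fractional Gagliardo--Nirenberg and a fractional chain rule, which is precisely the toolkit the paper points to.

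One small refinement: the chain rule you write, $\||u|^p\|_{\dot H^{\sigma-1}}\lesssim \|u\|_{L^\infty}^{p-1}\|u\|_{\dot H^{\sigma-1}}$, is not quite the form the paper singles out. Lemma~\ref{Thm FCR} is stated with finite Lebesgue exponents, $\||D|^{\sigma}F(u)\|_{L^r}\lesssim \|u\|_{L^{r_1}}^{p-1}\||D|^{\sigma}u\|_{L^{r_2}}$ with $\tfrac{1}{r}=\tfrac{p-1}{r_1}+\tfrac{1}{r_2}$, and it is this version (combined with Lemma~\ref{Thm FGNI} to control $\|u\|_{L^{r_1}}$ and $\||D|^\sigma u\|_{L^{r_2}}$ in terms of $\|u\|_{L^2}$ and $\|u\|_{\dot H^\sigma}$) that is actually used; the condition $p>\lceil\sigma\rceil$ is exactly the hypothesis of Lemma~\ref{Thm FCR}. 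The paper also lists the fractional Leibniz rule (Lemma~\ref{thm FLR}) separately, which enters when estimating the difference $|u|^p-|v|^p$ for the Lipschitz bound \eqref{contraction prociple 2nd ineq}. Apart from this adjustment of the exact form of the nonlinear inequality, your outline matches the route indicated by the paper.
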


As we said before, in order to prove this result the following tools from harmonic analysis are used. 

\begin{lemma}\label{Thm FGNI}
Let $1<p,p_0,p_1<\infty$ and $\kappa\in [0,\sigma)$. Then it holds the following fractional Gagliardo-Nirenberg inequality for all $u\in L^{p_0}\cap \dot{H}^\sigma_{p_1}$:
\begin{align*}
\|u\|_{\dot{H}^{\kappa}_p}\lesssim \|u\|_{L^{p_0}}^{1-\theta}\|u\|_{\dot{H}^{\sigma}_{p_1}}^\theta,
\end{align*} where $\theta=\theta_{\kappa,\sigma}(p,p_0,p_1)=\frac{\frac{1}{p_0}-\frac{1}{p}
+\frac{\kappa}{n}}{\frac{1}{p_0}-\frac{1}{p_1}+\frac{\sigma}{n}}$ and $\frac{\kappa}{\sigma}\leq \theta\leq 1$.
\end{lemma}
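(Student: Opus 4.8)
Since the paper defers the proof of this fractional Gagliardo--Nirenberg inequality to \cite{PalRei17}, let me sketch the route I would take, a standard Littlewood--Paley argument. First I would fix the homogeneous dyadic decomposition $u=\sum_{j\in\mathbb{Z}}\Delta_j u$, where $\Delta_j$ localises frequencies to the annulus $|\xi|\approx 2^j$, and recall the two tools available because $1<p,p_0,p_1<\infty$: the Littlewood--Paley characterisation $\|v\|_{\dot{H}^s_q}\approx\big\|(\sum_j 4^{js}|\Delta_j v|^2)^{1/2}\big\|_{L^q}$, together with the embeddings $\dot{B}^s_{q,\min(q,2)}\hookrightarrow\dot{H}^s_q\hookrightarrow\dot{B}^s_{q,\max(q,2)}$; and Bernstein's inequalities, $\||D|^\kappa g\|_{L^q}\approx 2^{j\kappa}\|g\|_{L^q}$ and $\|g\|_{L^b}\lesssim 2^{jn(1/a-1/b)}\|g\|_{L^a}$ for $a\le b$, valid whenever $g$ has Fourier support in $|\xi|\approx 2^j$.

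Before the estimate I would pin down $\theta$ by scaling: both sides transform the same way under $u\mapsto u(\lambda\,\cdot)$ exactly when $\kappa-\tfrac{n}{p}=(1-\theta)\big(-\tfrac{n}{p_0}\big)+\theta\big(\sigma-\tfrac{n}{p_1}\big)$, which rearranges to the stated formula $\theta=\frac{\frac{1}{p_0}-\frac{1}{p}+\frac{\kappa}{n}}{\frac{1}{p_0}-\frac{1}{p_1}+\frac{\sigma}{n}}$. The estimate itself then truncates the frequency sum at a scale $2^N$ to be optimised: for the blocks with $j\le N$ I would pass to the $L^{p_0}$ norm, for those with $j>N$ to $\||D|^\sigma u\|_{L^{p_1}}=\|u\|_{\dot{H}^\sigma_{p_1}}$, applying Bernstein in each range to get, schematically, $\|u\|_{\dot{H}^\kappa_p}\lesssim\sum_{j\le N}2^{ja_0}\|u\|_{L^{p_0}}+\sum_{j>N}2^{ja_1}\|u\|_{\dot{H}^\sigma_{p_1}}$ with $a_0=\kappa+\tfrac{n}{p_0}-\tfrac{n}{p}$ and $a_1=\kappa-\sigma+\tfrac{n}{p_1}-\tfrac{n}{p}$. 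One checks that $a_0\ge 0$ amounts to the numerator of $\theta$ being nonnegative and $a_1\le 0$ to $\theta\le 1$, so in the strict range $\kappa/\sigma<\theta<1$ both are convergent geometric series, bounded respectively by $2^{Na_0}\|u\|_{L^{p_0}}$ and $2^{Na_1}\|u\|_{\dot{H}^\sigma_{p_1}}$; choosing $N$ to balance them (rounding to an integer costs only a constant, since $a_0-a_1=\sigma+\tfrac{n}{p_0}-\tfrac{n}{p_1}>0$) produces $\|u\|_{\dot{H}^\kappa_p}\lesssim\|u\|_{L^{p_0}}^{1-\theta}\|u\|_{\dot{H}^\sigma_{p_1}}^{\theta}$ with $\theta=a_0/(a_0-a_1)$, which is exactly the asserted value.

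The part that needs genuine care, beyond exponent bookkeeping, is twofold. The borderline cases $\theta=1$ and $\theta=\kappa/\sigma$ must be treated separately: when $\theta=1$ the inequality collapses to the homogeneous Sobolev embedding $\dot{H}^\sigma_{p_1}\hookrightarrow\dot{H}^\kappa_p$, which I would get from the Hardy--Littlewood--Sobolev inequality applied to the Riesz potential $|D|^{\kappa-\sigma}$ acting on $|D|^\sigma u$ (using $\kappa<\sigma$), while for $\theta=\kappa/\sigma$ the $L^{p_0}$-factor becomes redundant. More substantively, when $p<p_0$ --- which is compatible with the hypotheses only when $\kappa>0$ --- the low-frequency blocks cannot be bounded by ``raising integrability from $p_0$ to $p$'', since Bernstein only runs the other way; there one must instead interpolate each block's $L^p$-norm between $L^{p_0}$ and $L^{p_1}$, or pass to the square-function formulation and invoke the Fefferman--Stein vector-valued maximal inequality, and then recheck that the hypotheses $\kappa/\sigma\le\theta\le1$ are precisely what make the estimates close. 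This last point is the main obstacle; for the complete argument I would follow \cite{PalRei17} and the standard theory of Besov and Triebel--Lizorkin spaces.
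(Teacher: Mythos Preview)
The paper does not give its own proof of this lemma; it simply cites the literature. A small correction: the reference is \cite{HMOW11} (Hajaiej--Molinet--Ozawa--Wang), not \cite{PalRei17} as you write --- the latter is cited only for the fractional chain rule, Lemma~\ref{Thm FCR}.

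Your Littlewood--Paley sketch is the standard route and is essentially what one finds in \cite{HMOW11}, so there is no meaningful divergence of method to discuss. One point worth tightening: you write that ``$a_0\ge 0$ amounts to the numerator of $\theta$ being nonnegative'', which gives $\theta\ge 0$, not the sharper constraint $\theta\ge\kappa/\sigma$ stated in the lemma. The lower bound $\kappa/\sigma$ is genuinely needed (it is necessary as well as sufficient, see \cite{HMOW11}); it enters precisely in the regime $p<p_0$ you flag at the end, where one cannot use Bernstein to raise integrability and must instead interpolate each block between the two endpoint norms. You acknowledge this obstacle but do not connect it to the appearance of $\kappa/\sigma$ rather than $0$ as the lower threshold. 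Apart from this bookkeeping gap, the argument is sound.
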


\begin{lemma}\label{thm FLR} Let us assume $\sigma>0$ and $1\leq r \leq \infty, 1<p_1,p_2,q_1,q_2\leq \infty$ satisfying the relation $$\tfrac{1}{r}=\tfrac{1}{p_1}+\tfrac{1}{p_2}=\tfrac{1}{q_1}+\tfrac{1}{q_2}.$$ Then it holds the following fractional Leibniz rule:
\begin{align*}
\|\,|D|^\sigma(u \,v)\|_{L^r}\lesssim \|\,|D|^\sigma u\|_{L^{p_1}}\|v\|_{L^{p_2}}+\|u\|_{L^{q_1}}\|\,|D|^\sigma v\|_{L^{q_2}}
\end{align*}  for any $u$ and $v$ such that the norms of the right-hand side exist.
\end{lemma}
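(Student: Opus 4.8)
The target estimate is a Kato--Ponce type fractional Leibniz rule, and the plan is to prove it by a Littlewood--Paley / paraproduct decomposition in the spirit of Coifman--Meyer. First I would fix a homogeneous Littlewood--Paley family $\{\Delta_j\}_{j\in\mathbb{Z}}$ with $\Delta_j$ localizing to the dyadic annulus $|\xi|\approx 2^j$, set $S_j:=\sum_{k\le j-1}\Delta_k$, and decompose the product by Bony's formula
\begin{align*}
uv=\sum_{j}S_{j-2}u\,\Delta_j v+\sum_{j}S_{j-2}v\,\Delta_j u+\sum_{|j-k|\le 2}\Delta_j u\,\Delta_k v=:\Pi_1+\Pi_2+\Pi_3 .
\end{align*}
Here $\Pi_1$ should be controlled by $\|u\|_{L^{q_1}}\|\,|D|^\sigma v\|_{L^{q_2}}$, $\Pi_2$ by $\|\,|D|^\sigma u\|_{L^{p_1}}\|v\|_{L^{p_2}}$ (the two are symmetric), and the resonant term $\Pi_3$ by either of the two products on the right-hand side.

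For $\Pi_1$, each block $S_{j-2}u\,\Delta_j v$ has frequency support in $|\xi|\approx 2^j$, so the blocks are almost orthogonal and $|D|^\sigma$ acts on the $j$-th block essentially as the constant $2^{j\sigma}$; applying the Littlewood--Paley square-function theorem (for $1<r<\infty$) and then H\"older with $\tfrac1r=\tfrac1{q_1}+\tfrac1{q_2}$ gives
\begin{align*}
\big\|\,|D|^\sigma\Pi_1\big\|_{L^r}\lesssim\Big\|\Big(\sum_j 2^{2j\sigma}|S_{j-2}u|^2|\Delta_j v|^2\Big)^{1/2}\Big\|_{L^r}\lesssim\big\|\sup_j|S_{j-2}u|\big\|_{L^{q_1}}\Big\|\Big(\sum_j 2^{2j\sigma}|\Delta_j v|^2\Big)^{1/2}\Big\|_{L^{q_2}} .
\end{align*}
One then bounds $\sup_j|S_{j-2}u|\lesssim Mu$ by the Hardy--Littlewood maximal function, which is bounded on $L^{q_1}$ because $q_1>1$, and identifies the remaining square function with $\|\,|D|^\sigma v\|_{L^{q_2}}$ via the Littlewood--Paley (Triebel--Lizorkin) characterization of the homogeneous Riesz-potential space, valid for $1<q_2<\infty$. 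The term $\Pi_2$ is handled identically after interchanging the roles of $u$ and $v$.

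The resonant term $\Pi_3$ is the main obstacle: the product of two blocks at scale $2^j$ has output frequency spread over the whole ball $|\xi|\lesssim 2^{j}$, so the clean localization used above is lost. Here I would organize the sum by the \emph{output} scale, writing $|D|^\sigma\Pi_3=\sum_{\ell}|D|^\sigma P_\ell\big(\sum_{j\gtrsim\ell}\Delta_j u\,\widetilde\Delta_j v\big)$ with a Littlewood--Paley block $P_\ell$ at scale $2^\ell$, use $\|\,|D|^\sigma P_\ell g\|_{L^r}\approx 2^{\ell\sigma}\|P_\ell g\|_{L^r}$, keep the square-function structure in $\ell$, and estimate $\sum_{\ell\le j}2^{\ell\sigma}\lesssim 2^{j\sigma}$ --- this geometric summation is exactly where the hypothesis $\sigma>0$ enters. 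Combining this with H\"older ($\|\Delta_j u\,\widetilde\Delta_j v\|_{L^r}\le\|\Delta_j u\|_{L^{p_1}}\|\widetilde\Delta_j v\|_{L^{p_2}}$) and the Fefferman--Stein vector-valued maximal inequality yields the bound $\|\,|D|^\sigma u\|_{L^{p_1}}\|v\|_{L^{p_2}}$ (the factor $2^{j\sigma}$ may alternatively be placed on the $v$-block, giving the other term). The remaining difficulty is the endpoint cases $r\in\{1,\infty\}$ or $p_i,q_i=\infty$, where the $L^r$ Littlewood--Paley and maximal estimates fail; these would be treated with the usual substitutes --- Hardy space $H^1$ and $\mathrm{BMO}$ duality, or the refined arguments of Grafakos--Oh and Bourgain--Li --- or, alternatively, one may bypass paraproducts and invoke the Coifman--Meyer multiplier theorem applied to the bilinear symbol $|\xi+\eta|^\sigma$ split on the regions $|\xi|\gtrsim|\eta|$ and $|\xi|\lesssim|\eta|$, the required symbol bounds near the diagonal $\xi+\eta=0$ again using $\sigma>0$. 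Since in this paper the lemma is only used as a black box, in practice I would simply cite \cite{PalRei17} and the classical literature; the above indicates how a self-contained proof proceeds.
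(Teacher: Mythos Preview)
Your sketch is a correct outline of the standard Kato--Ponce argument via Bony's paraproduct decomposition, and it identifies the right ingredients (square-function characterization, maximal function bounds, the geometric sum in the resonant piece using $\sigma>0$, and the Hardy/$\mathrm{BMO}$ substitutes at the endpoints). However, the paper itself does \emph{not} prove this lemma at all: immediately after stating Lemmas~\ref{Thm FGNI}--\ref{Thm FCR} it simply refers the reader to \cite{Gra15} (Grafakos--Oh) for the proof of Lemma~\ref{thm FLR}. So there is nothing to compare beyond noting that your sketch is precisely the approach carried out in that reference. One small correction: at the end you say you would cite \cite{PalRei17}; in the paper that reference is attached to the fractional \emph{chain} rule (Lemma~\ref{Thm FCR}), while the Leibniz rule is attributed to \cite{Gra15}.
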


\begin{lemma}\label{Thm FCR} Let us denote by $F(u)$ one of the functions $|u|^p, \pm |u|^{p-1}u$.
 Let us consider $\sigma>0$ such that $p>\lceil \sigma \rceil$
 and $1<r,r_1,r_2<\infty$ satisfying the condition
\begin{equation*}
\frac{1}{r}=\frac{p-1}{r_1}+\frac{1}{r_2}.
\end{equation*}
 Then it holds the following fractional chain rule:
\begin{align*}
\|\,|D|^{\sigma} F(u)\|_{L^r}\lesssim \|u\|_{L^{r_1}}^{p-1}\|\,|D|^{\sigma} u\|_{L^{r_2}}
\end{align*}
 for any $u\in  L^{r_1}\cap \dot{H}^{\sigma}_{r_2}$.
\end{lemma}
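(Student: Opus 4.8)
The plan is to split the order as $\sigma=m+s$ with $m=\lfloor\sigma\rfloor$ and $s=\sigma-m\in[0,1)$, to reduce the estimate to the case $s\in(0,1)$ combined with the integer-order Leibniz and chain rules, and then to reassemble the general case using the fractional Leibniz rule (Lemma~\ref{thm FLR}) and the fractional Gagliardo--Nirenberg inequality (Lemma~\ref{Thm FGNI}). Since the Riesz transforms are bounded on every $L^\rho$ with $1<\rho<\infty$, it suffices to estimate $\||D|^{s}\nabla^{m}F(u)\|_{L^r}$. The hypothesis $p>\lceil\sigma\rceil$ forces $F\in\mathcal{C}^{m+1}$ with $|F^{(j)}(z)|\lesssim|z|^{p-j}$ for $0\le j\le m+1$, so that the higher-order chain rule (Fa\`{a} di Bruno) expands $\nabla^{m}F(u)$ into a finite sum of terms $F^{(j)}(u)\,\nabla^{k_1}u\cdots\nabla^{k_j}u$ with $1\le j\le m$, $k_i\ge1$ and $k_1+\dots+k_j=m$ (when $m=0$ there is nothing to expand). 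Everything is thus reduced to bounding $\||D|^{s}\big(F^{(j)}(u)\,\nabla^{k_1}u\cdots\nabla^{k_j}u\big)\|_{L^r}$ by $\|u\|_{L^{r_1}}^{p-1}\||D|^{\sigma}u\|_{L^{r_2}}$.

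The base case $m=0$, i.e.\ $\sigma=s\in(0,1)$, is the core of the argument. I would use the difference characterization $\dot H^{\sigma}_{r}=\dot F^{\sigma}_{r,2}$, valid for $1<r<\infty$,
\begin{align*}
\||D|^{\sigma}g\|_{L^r}\approx\Big\|\Big(\int_{\mathbb{R}^n}\frac{|g(\cdot+y)-g(\cdot)|^{2}}{|y|^{n+2\sigma}}\,dy\Big)^{1/2}\Big\|_{L^r},
\end{align*}
together with the pointwise bound coming from the mean value theorem and $|F'(z)|\lesssim|z|^{p-1}$,
\begin{align*}
|F(u(x+y))-F(u(x))|\lesssim|u(x)|^{p-1}\,|u(x+y)-u(x)|+|u(x+y)-u(x)|^{p}.
\end{align*}
For the first summand one factors out $|u(x)|^{p-1}$ and applies H\"{o}lder's inequality in $L^r$ with exponents $\tfrac{r_1}{p-1}$ and $r_2$, which close precisely because $\tfrac{p-1}{r_1}+\tfrac1{r_2}=\tfrac1r$, producing $\|u\|_{L^{r_1}}^{p-1}\||D|^{\sigma}u\|_{L^{r_2}}$. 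For the second summand one separates $|y|\le1$ and $|y|>1$ and, where it helps, uses $|u(x+y)-u(x)|\lesssim|u(x)|+|u(x+y)|$; the contribution in which a whole factor $|u(x+y)|$ survives is absorbed via the Hardy--Littlewood maximal inequality, while the remainder is controlled by a Sobolev/Gagliardo--Nirenberg embedding in the Triebel--Lizorkin scale, of the schematic form $\|u\|_{\dot F^{\sigma/p}_{pr,\,2p}}\lesssim\|u\|_{L^{r_1}}^{1-1/p}\||D|^{\sigma}u\|_{L^{r_2}}^{1/p}$, whose scaling identity is again $\tfrac1r=\tfrac{p-1}{r_1}+\tfrac1{r_2}$. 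This is in essence the classical Christ--Weinstein argument, and it is the only genuinely delicate step.

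With the case $\sigma\in(0,1)$ available, the general term is pure bookkeeping (and when $s=0$ there is no $|D|^{s}$ to move and one estimates each Fa\`{a} di Bruno term directly by H\"{o}lder and Lemma~\ref{Thm FGNI}). Applying Lemma~\ref{thm FLR} repeatedly, $|D|^{s}$ is brought onto a single factor of $F^{(j)}(u)\,\nabla^{k_1}u\cdots\nabla^{k_j}u$: if it lands on $F^{(j)}(u)$ one invokes the $(0,1)$ chain rule just proved (legitimate, since $F^{(j)}$ is again of power type with exponent $p-j>1$, as $p>m+1\ge j+1$), obtaining $\|u\|^{p-j-1}\,\||D|^{s}u\|$ times the other factors; if it lands on $\nabla^{k_i}u$ one obtains $\||D|^{s+k_i}u\|$. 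Every remaining factor is of the form $F^{(\ell)}(u)\sim|u|^{p-\ell}$ or $\nabla^{k_i}u$, and each of these, together with the top-order one, is interpolated by Lemma~\ref{Thm FGNI} between $L^{r_1}$ and $\dot H^{\sigma}_{r_2}$. Counting homogeneities, the $u$-count of any such term is $p$, and the interpolation exponents feeding the $\dot H^{\sigma}_{r_2}$ slot sum to $\tfrac{s+k_1}{\sigma}+\sum_{i\ge2}\tfrac{k_i}{\sigma}=1$ (respectively $\tfrac{s}{\sigma}+\sum_i\tfrac{k_i}{\sigma}=1$ in the other case), leaving exactly $p-1$ units for the $L^{r_1}$ slot; the relation $\tfrac1r=\tfrac{p-1}{r_1}+\tfrac1{r_2}$ is precisely what makes all the intermediate Lebesgue exponents simultaneously admissible. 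Summing the finitely many terms yields the claim; full details may be found in \cite{PalRei17}. The main obstacle is, as indicated, the base case $\sigma\in(0,1)$, and within it the contribution of the ``shifted'' factor $|u(x+y)|^{p-1}$ to the difference seminorm.
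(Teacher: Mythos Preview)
The paper does not prove this lemma; it only states it and says that ``the result stated in Lemma~\ref{Thm FCR}, whose proof can be found in \cite{PalRei17}, \ldots\ is the first generalization to general order $\sigma>0$ \ldots\ of a classic result originally proved in \cite{Chri91,Sta95}.'' There is therefore no in-paper argument to compare with; one can only check your sketch against the architecture that the cited references suggest.

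Your outline is precisely the expected one: write $\sigma=m+s$ with $m=\lfloor\sigma\rfloor$, pass from $|D|^\sigma$ to $|D|^s\nabla^m$ via Riesz transforms, expand $\nabla^m F(u)$ by Fa\`a di Bruno, distribute $|D|^s$ with Lemma~\ref{thm FLR}, and close every factor by Lemma~\ref{Thm FGNI}, once the base case $\sigma\in(0,1)$ is in hand by a Christ--Weinstein type argument. This is sound and almost certainly what \cite{PalRei17} does.

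Two points would need tightening before this becomes a proof rather than a sketch. First, in the base case your treatment of the summand $|u(x+y)-u(x)|^{p}$ mixes two arguments. The $|y|\le1$/$|y|>1$ splitting and the maximal-function remark are unnecessary here; the clean route is the one you also indicate: the contribution of $|u(\cdot+y)-u(\cdot)|^{p}$ to the $\dot F^{\sigma}_{r,2}$ square function is exactly $\|u\|_{\dot F^{\sigma/p}_{pr,2p}}^{p}$, and then monotonicity in the inner index ($\dot F^{\sigma/p}_{pr,2}\hookrightarrow \dot F^{\sigma/p}_{pr,2p}$, valid since $2\le 2p$) together with the Gagliardo--Nirenberg interpolation between $L^{r_1}$ and $\dot H^{\sigma}_{r_2}$ closes it; your scaling identity $\tfrac1r=\tfrac{p-1}{r_1}+\tfrac1{r_2}$ is the correct one. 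Second, in the reassembly step the sentence ``the relation $\tfrac1r=\tfrac{p-1}{r_1}+\tfrac1{r_2}$ is precisely what makes all the intermediate Lebesgue exponents simultaneously admissible'' hides the only nontrivial bookkeeping: for each factor one must check that the Gagliardo--Nirenberg parameter $\theta$ from Lemma~\ref{Thm FGNI} actually falls in the admissible window $[\kappa/\sigma,1]$ and that the chosen H\"older exponents remain in $(1,\infty)$. Scaling alone fixes the \emph{sum} of the $\theta$'s but not each one individually; the verification is routine but has to be written out, and it is where the hypothesis $p>\lceil\sigma\rceil$ (through the regularity $F\in\mathcal{C}^{m+1}$ and the requirement $p-j>1$ when $|D|^s$ falls on $F^{(j)}(u)$) is genuinely used.
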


While the proof of Lemmas \ref{Thm FGNI} and \ref{thm FLR} can be found in \cite{HMOW11} and \cite{Gra15}, respectively, the result stated in Lemma \ref{Thm FCR}, whose proof can be found in \cite{PalRei17}, up to the knowledge of the author, is the first generalization to general order $\sigma>0$ and arbitrary spatial dimension $n$ of a classic result originally proved in \cite{Chri91,Sta95}.


\subsection*{Acknowledgment}

The author is member of the Gruppo Nazionale per L'Analisi Matematica, la Probabilit\`{a} e le loro Applicazioni (GNAMPA) of the Instituto Nazionale di Alta Matematica (INdAM). The author thanks Marcello D'Abbicco (University of Bari) for his numerous hints and Michael Reissig (TU Bergakademie Freiberg) for several fruitful discussions on the subject and for his helpful suggestions in the preparation of this paper.

\end{document}